\documentclass[12pt]{article}

\usepackage{amsmath,amssymb,amsthm,textcomp,mathtools}
\usepackage{amsfonts,graphicx}
\usepackage[mathscr]{eucal}
\usepackage{color}
\usepackage{csquotes}
\usepackage{enumitem}
\usepackage{authblk} 
\usepackage{comment}
\usepackage{appendix}
\numberwithin{equation}{section}

\theoremstyle{definition}

\numberwithin{equation}{section}

\newtheorem{theorem}{\bf Theorem}[section]
\newtheorem{remark}{\bf Remark}[section]

\newtheorem{lemma}{Lemma}[section]
\newtheorem{corollary}{Corollary}[section]
\newtheorem{example}{Example}[section]

\newtheoremstyle
{remarkstyle}
{}
{11pt}
{}
{}
{\bfseries}
{:}
{     }
{\thmname{#1} \thmnumber{#2} }

\theoremstyle{remarkstyle}

\begin{document}

\title{Revisiting Toeplitz and Hankel random matrices via $*$-convergence of circulant-type matrices}
\author{Arup Bose\thanks{bosearu@gmail.com, \ Research  
supported by J.C.~Bose National Fellowship, JBR/2023/000023 from Anusandhan National Research Foundation, Govt. of India.}}

\author{Pradeep Vishwakarma\thanks{vishwakarmapr.rs@gmail.com, Research  
supported by National Post Doctoral Fellowship, PDF/2025/000076 from Anusandhan National Research Foundation, Govt. of India.}}
\affil{Theoretical Statistics and Mathematics Unit, Indian Statistical Institute, Kolkata, 700108, West Bengal, India.}
	\date{\today}	
	\maketitle
	\begin{abstract} 
    We establish the joint $*$-convergence of a random circulant matrix and a specific deterministic diagonal matrix. We also show that the empirical spectral distributions of skew-circulant and left skew-circulant random matrices converge weakly a.s.~to complex Gaussian and symmetrized Rayleigh distributions, respectively. 
    
    The $*$-convergence of symmetric Toeplitz and Hankel random matrices is well known. So is the weak convergence of their random spectrum. However, not much is known about the limits. We exploit the connections of circulant, reverse circulant, and left skew-circulants with the Hankel and Toeplitz matrices, to show the $*$-convergence of the random symmetric Toeplitz matrix to the sum of two non-commutative self-adjoint variables, each having a real Gaussian distribution. A similar result holds for the non-symmetric Toeplitz matrix, but the variables are not self-adjoint and have a complex Gaussian distribution. The random Hankel matrix is shown to converge in $*$-distribution to a sum of two self-adjoint variables, each of which has a symmetrized Rayleigh distribution. As a consequence of these results, we also obtain a different proof of the convergence of the empirical spectral distribution of symmetric Toeplitz and Hankel matrices, and a slightly different way of expressing the moments of the limit spectral distribution.\\
     
\noindent \textbf{AMS Subject Classification [2020]} Primary 60B10; Secondary 60B20, 15B52.\\

\noindent\textbf{Keywords:} Circulant matrix, Toeplitz matrix, Hankel matrix, skew-circulant matrix, left skew-circulant matrix, $*$-convergence, complex Gaussian variable, symmetrized Rayleigh distribution.
\end{abstract}

\section{Introduction}\label{intro}
Let $\mathcal{A}$ be a $*$-algebra with identity $1_{\mathcal{A}}$. Let $\varphi$ be a complex linear functional on $\mathcal{A}$ with $\varphi(1_{\mathcal{A}})=1$. It is called a state. 
The pair $(\mathcal{A},\varphi)$ is called a non-commutative $*$-probability space, in short an NCP.
Let $\mathbb{C}\langle x_1,\dots,x_p\rangle$ be the set of complex  monomials in non-commutative indeterminates $x_1, x_1^*, \dots,x_p, x_p^*$.  
For any $\{a_1,\dots,a_p\}\subset\mathcal{A}$, their $*$-distribution, also called the joint distribution,  is the collection of all their joint moments $\mu:\mathbb{C}\langle x_1,\dots,x_p\rangle\rightarrow\mathbb{C}$, given by
$\mu(\{a_1,\dots,a_p\})\coloneqq\varphi(Q(\{a_1,\dots,a_p\}))$, $Q\in\mathbb{C}\langle x_1,\dotsb, x_p\rangle$. 
Any $a\in \mathcal{A}$ is called a variable.  It is called self-adjoint if $a=a^*$. If $a$ is self-adjoint, often its moments $\{\varphi(a^k)\}$ determine a unique probability measure on $\mathbb{R}$. A variable $a$ is called normal if $aa^*=a^*a$. If $a$ is normal, often its moments $\{\varphi(a^ka^{*l})\}$ determine a unique probability measure on $\mathbb{C}$. 

{In particular, a self-adjoint $a$ is called (standard) Gaussian if its moments coincide with those of a (standard) Gaussian probability distribution. Likewise, a non self-adjoint normal variable (element) $a$ is called a (standard) complex Gaussian variable, if its moments $\{\varphi(a^ka^{*l})\}$ coincide with the moments $\{\mathbb{E}(N^kN^{*l})\}$, where $N=X+\iota Y$ is a (standard) complex Gaussian random variable, that is, $X$ and $Y$ are independent Gaussian with mean $0$ and variance $1/2$. If $a$ is a (standard) complex Gaussian variable, then the self-adjoint variable $(a+a^*)/\sqrt{2}$ is a (standard) Gaussian variable. A self-adjoint $a$ is called a (standard) symmetrized Rayleigh variable if $\varphi(a^{2m-1})=0$ and $\varphi(a^{2m})=m!$ for every integer $m \geq 1$. These moments uniquely determine the symmetrized Rayleigh probability density 
\begin{equation*}
    f_R(x)=|x|e^{-x^2}, \ -\infty < x < \infty.
\end{equation*}}
Note the distinction between a variable (which is an element of an NCP) and a random variable (which is a real or complex valued measurable function on some probability space). Finally, variables $a$ and $b$ in an NCP $(\mathcal{A}, \varphi)$ are called independent if they commute and for all integers $k, l, m, n$, 
$\varphi (a^k a^{*l}b^m b^{*n})= \varphi(a^k a^{*l})\varphi(b^m b^{*n})$.

Let $(\mathcal{A}_n,\varphi_n)$, $n\ge1$ and $(\mathcal{A}, \varphi)$ be NCPs.
Sequence of elements $\{a_{n,j}\}_{1\leq j\leq p}$ from $\mathcal{A}_n$ is said to converge in $*$-distribution to $\{a_j\}_{1\leq j\leq p} \subset\mathcal{A}$ if 
$\lim_{n\rightarrow\infty}\varphi_n(Q(\{a_{n,j}\}_{1\leq j\leq p }))=\varphi(Q(\{a_j\}_{1\leq j\leq p}))$ for all $Q\in \mathbb{C}\langle x_1,\dots,x_p\rangle$.

Let $\mathcal{M}_n$ be the collection of $n\times n$ random matrices, all whose entries have finite moments of all orders. Let $\mathrm{Tr}(\cdot)$ and $\mathbb{E}$ denote respectively, trace and expectation. Then, $(\mathcal{M}_n, \varphi_n)$ with $\varphi_n\coloneqq n^{-1}\mathbb{E}\mathrm{Tr}(\cdot)$ is an NCP.
There are many interesting results on $*$-convergence of random matrices. For example, under suitable conditions, independent Wigner matrices converge and are asymptotically free of deterministic matrices as well as some random matrices. For example, see \cite{Adhikari2019} and \cite{Nica2006}. For more details on the $*$-convergence, see \cite{Bose2018, Bose2022}.

A related notion of convergence for random matrices is that of the (random) empirical spectral distribution (ESD). Let $A_n\in\mathcal{M}_n$ with 
eigenvalues $\lambda_1,\lambda_2,\dots,\lambda_n$. Its ESD is defined as $\mu(A_n)\coloneqq n^{-1}\sum_{j=1}^{n}\delta_{\lambda_j}$. 
Weak convergence of ESD (in probability or almost surely (a.s)) to a limiting spectral distribution (LSD) has been studied extensively. Matrices for which convergence results are known include Wigner, Toeplitz, Hankel, Circulant, Sample covariance, etc. See  \cite{Anderson2009, Bose2022}, and references therein for examples and details. 

{ The problem of studying the LSD of a random Toeplitz matrix with independent entries was raised in the review paper \cite{Bai1999}}. Bryc et al. \cite{Bryc2006} and Hammond and Miller \cite{Hammond2005} established the convergence of the ESD for symmetric Toeplitz and Hankel random matrices weakly a.s.~to 
non-random LSDs which are symmetric around $0$. The limits are universal in the sense that they do not depend on the specific distribution of the entries of these matrices. The Toeplitz LSD is known to have a Lebesgue density { (see \cite{Sen2011})}. The Hankel LSD is known to be not unimodal. { For more details on the asymptotic behavior of the eigenvalues of these matrices, see \cite{Bose2008, Chatterjee2009, Kargin2009, Liu2011, Liu2012, Onatski2026, Sen2013}, and references therein}. 
These matrices also converge jointly in $*$-distribution to self-adjoint variables whose $(2m)$th moments are the sums of volumes of some polyhedra inside the unit hypercube of $m+1$ dimensions (see Theorem 10.3.1 of \cite{Bose2022}). No clear picture exists of the behavior of the $*$-limits. 


In Section \ref{sec2}, we investigate the joint $*$-convergence of a random circulant matrix with a deterministic diagonal matrix. For $\eta=e^{\iota\theta/n}$, $\theta\in[-\pi,\pi]$, define an $n\times n$ 
diagonal deterministic matrix $D_n(\theta)=\text{diag}(1,\eta,\dots,\eta^{n-1})$. Let $\{c_i\}_{i\ge0}$ and $\{\tilde{c}_i\}_{i\ge0}$ be independent sequences of independent standard Gaussian random variables, and let $C_n=\text{Circ}(c_0,c_1,\dots,c_{n-1})$ and $\tilde{C}_n=\text{Circ}(\tilde{c}_0,\tilde{c}_1\eta,\dots,\tilde{c}_{n-1}\eta^{n-1})$ be circulant matrices. We show that $\{n^{-1/2}C_n,n^{-1/2}\tilde{C}_n,D_n(\theta)\}$ as elements in $(\mathcal{M}_n,\varphi_n=n^{-1}\mathbb{E}\text{Tr})$ jointly converge in $*$-distribution to variables $\{N,\tilde{N},d(\theta)\}$ in some suitable NCP $(\mathcal{A},\varphi)$, where $N$ and $\tilde{N}$ are independent standard complex Gaussian variables, and the $*$-moments of the variable $d(\theta)$ coincides with those of {$e^{\iota\theta U}$} with $U\sim Uniform(0,1)$. This result extends easily to independent copies of the above circulant matrices, and is used in Section \ref{sec4}. 

In Section \ref{sec3}, we show that the LSDs of skew-circulant and left skew-circulant random matrices are the same as those of, respectively the circulant and reverse circulant, namely 
complex Gaussian and symmetrized Rayleigh. These matrices also converge in $*$-distribution. 
Skew-circulant matrices commute,
and 
 converge in $*$-distribution to 
independent standard complex Gaussian variables. On the other hand, independent left skew-circulant matrices half commute and converge in $*$-distribution to half-independent (for definition, see Section \ref{sec32}) self-adjoint variables with symmetrized Rayleigh distributions. 

Let $T_n$, $T_{s,n}$, and $H_n$ be respectively Toeplitz, symmetric Toeplitz, and Hankel matrices whose entries are independent with mean $0$, variance $1$, and all moments uniformly bounded.   
The $*$-convergence of $n^{-1/2}T_{s,n}$ and $n^{-1/2}H_n$ were established in \cite{Bose2011}, and of multiple independent copies of these matrices in \cite{basuboseganghazra2012}. These proofs can also be adapted to $n^{-1/2}T_n$. When the entries are iid with mean $0$ and variance $1$, the a.s.~LSD of $n^{-1/2}T_{s,n}$ and $n^{-1/2}H_n$  were proved in \cite{Bryc2006} and \cite{Hammond2005}. 
In Sections \ref{sec41} and \ref{sec42}, we take a different route to  prove these results, and also provide additional information on the limits. 

In Section \ref{sec41}, first, we use the fact that $T_n$ is the sum of circulant and skew-circulant matrices to show that $n^{-1/2}T_n$ converges in $*$-distribution to $(N_1+N_2)/\sqrt{2}$, where $N_1$ and $N_2$ are standard complex Gaussian variables.
The dependence relation between $N_1$ and $N_2$ is not identified. 
However, 
their joint moments are sums of  appropriate joint moments of $\{N,\tilde{N}, d(\pi)\}$, where $N$ and $\tilde{N}$ are independent standard complex Gaussian variables and the moments of $d(\pi)$ coincide with those of {$e^{\iota\pi U}$} with $U\sim Uniform(0,1)$. We then use the fact that $T_{s,n}$ is the sum of a non-symmetric Toeplitz matrix and its transpose, and the above $*$-convergence result, 
to show that $n^{-1/2}T_{s,n}$ converges in $*$-distribution to $(Z+\tilde{Z})/\sqrt{2}$, where $Z$ and $\tilde{Z}$ are self-adjoint variables with Gaussian distribution, and their joint moments are the sum of finitely many integrals of type (\ref{cdjm}) (for example, see Remark \ref{rem43} (i)).
Finally, we write $H_n$ as the sum of a reverse circulant and a (symmetric) left skew-circulant, to show that $n^{-1/2}H_n$ converges in $*$-distribution to $(Y_1+Y_2)/\sqrt{2}$, where $Y_1$ and $Y_2$ are self-adjoint variables with symmetrized Rayleigh distribution. 
It remains open to explicitly identify the relation between them. However, their joint moments are the sum of finitely many integrals of type (\ref{cdjm}) (for example, see Remark \ref{rem43} (ii)). 

In Section \ref{sec42}, we deal with the LSD of $n^{-1/2}T_{s,n}$  and $n^{-1/2}H_n$. We use their above $*$-convergence results to show that the a.s.~LSD of these matrices exists when the entries of $T_{s,n}$ and $H_n$ are iid with mean $0$ and variance $1$, thereby giving a different proof of the results of  \cite{Bryc2006} and \cite{Hammond2005}.  The moments of these LSD are given respectively by the moments of $(N+\tilde{N})/\sqrt{2}$ and $(Y_1+Y_2)/\sqrt{2}$, thereby providing different expressions for the moments compared to those given by \cite{Bryc2006} and \cite{Hammond2005}.

\section{$*$-convergence of a random circulant and a deterministic matrix} \label{sec2}
Since we shall be using the NCP $(\mathcal{M}_n, n^{-1}
 \mathbb{E} {\rm Tr})$ we introduce the following assumption.
\vskip5pt

\noindent \textbf{Assumption I}. Entries of the matrix are independent, with zero mean, unit variance, and uniformly bounded moments of any order. 
\vskip5pt

Any sequence $\{c_i\}$ generates  an $n\times n$ circulant matrix, $C_n\coloneqq((c_{j-i+n\, (\text{mod}\ n)}))_{0\leq i,j\leq n-1}$.
The classic book \cite{Davis1979} covers the basic theory of circulant matrices. For results on circulant matrices generated by random variables, see \cite{Bosesaha2018}. 

For example, if the entries $\{c_i\}$ are independent and have zero mean, unit variance, and uniformly bounded third moments, then the ESD of $n^{-1/2}C_n$ converges to the standard complex Gaussian distribution in probability. 
Now suppose $C_n^{(1)},\dots, C_n^{(p)}$ are $p$ independent random circulant matrices whose entries satisfy Assumption I.  
Let $(\mathcal{A},\varphi)$ be the $*$-probability space, where $\mathcal{A}$ is generated by independent standard complex Gaussian random variables $N_1, \dots, N_p$ 
and $\varphi$ is the expectation operator. Then, $n^{-1/2}(C_n^{(1)},\dots, C_n^{(p)})$ jointly converge to $(N_1, \ldots, N_p)$ in $*$-distribution (see \cite{Bose2026}). 
That is, for any monomial $Q\in\mathbb{C}\langle x_1,\dots,x_p\rangle$, $p\in\mathbb{N}$, 
\begin{equation}\label{jcc}
    \lim_{n\rightarrow\infty}\varphi_n(Q(n^{-1/2}C_n^{(1)}, \dots, n^{-1/2}C_n^{(p)})){\rightarrow}\varphi(Q(N_1,\dots,N_p)).
\end{equation}

For $\eta=e^{\iota\pi/n}$, let us consider a circulant matrix $\tilde{C}_n=\text{Circ}(c_0,c_1\eta,\dots,c_{n-1}\eta^{n-1})$ where $\{c_i\}$ satisfy Assumption I. 
Let $\tilde{C}_n^{(1)},\dots, \tilde{C}_n^{(p)}$ be independent circulant matrices of type $\tilde{C}_n$. Then, following the proof of Lemma 5.2 of \cite{Bose2026}, it can be shown that 
\begin{equation}\label{jclimit}
    \lim_{n\rightarrow\infty}\varphi_n(Q(n^{-1/2}\tilde{C}_n^{(1)}, \dots, n^{-1/2}\tilde{C}_n^{(p)})){\rightarrow}\varphi(Q(N_1,\dots,N_p)),
\end{equation}
where $N_1, \ldots , N_p$ are iid complex Gaussian variables and $\varphi$ is the expectation operator. 

The question of joint convergence of deterministic and random matrices is rather interesting. For example, it is known that under suitable conditions, Wigner and deterministic matrices converge jointly, and these collections are free of each other (see \cite{Voiculescu1991}). Similar results are known for iid Gaussian and deterministic matrices (see \cite{Nica2006}), and for elliptic and deterministic matrices (see \cite{Adhikari2019}). 
It is not at all easy to establish joint convergence of other random and deterministic matrices, for example, when we consider 
random circulant matrices, Toeplitz matrices or Hankel matrices. Even though the following theorem concerns a very specific case, it will be quite useful later. Let 
\begin{equation}\label{eqn:defndn}D_n(\theta)=\text{diag}\{d_0,d_1,\dots,d_{n-1}\}, \ \  \text{where} \ \ d_j=\eta^j, 0\leq j\leq n-1, \eta=e^{\iota\theta/n}, \theta\in [-\pi, \pi].\end{equation} The following lemma is immediate: { Its proof is given in the Appendix.}
\begin{lemma}\label{lemma1}
The ESD of $D_n(\theta)$ converges weakly to the distribution of { $(\cos(\theta U), \sin(\theta U))$}, where $U\sim Uniform(0,1)$. { As an element of $(\mathcal{M}_n,n^{-1}\text{Tr}(\cdot))$, $D_n(\theta)$ converges to a variable $d(\theta)$ in a $*$-probability space (generated by $d(\theta)$) with the state $\varphi$ defined via limiting values of $n^{-1}\mathrm{Tr}$ operating on $\mathbb{C}\langle D_n(\theta)\rangle$. 
For any monomial $m$, $\varphi(m(d(\theta)))=\mathbb{E} [m(e^{\iota\theta U})]$.}
\end{lemma}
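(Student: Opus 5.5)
The plan is to compute everything explicitly. $D_n(\theta)$ is diagonal with entries $d_j=e^{\iota\theta j/n}$, $0\le j\le n-1$, so its eigenvalues are exactly these numbers. The ESD is therefore
\[
\mu(D_n(\theta))=n^{-1}\sum_{j=0}^{n-1}\delta_{e^{\iota\theta j/n}},
\]
which, identifying $\mathbb{C}$ with $\mathbb{R}^2$, is the law of $(\cos(\theta U_n),\sin(\theta U_n))$ with $U_n$ uniform on $\{0,1/n,\dots,(n-1)/n\}$. Since $U_n\to U\sim Uniform(0,1)$ in distribution and $u\mapsto(\cos\theta u,\sin\theta u)$ is continuous, the continuous mapping theorem gives the weak convergence of the ESD. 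Equivalently, for any bounded continuous $f$ on $\mathbb{C}$, $n^{-1}\sum_j f(e^{\iota\theta j/n})$ is a Riemann sum for $\int_0^1 f(e^{\iota\theta u})\,du$ and converges to it.

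For the $*$-convergence, note first that $D_n(\theta)$ is normal: $D_n(\theta)^*=\mathrm{diag}(\bar d_j)$, and all diagonal matrices commute. So any monomial $m$ in $D_n(\theta),D_n(\theta)^*$ reduces to $D_n(\theta)^kD_n(\theta)^{*l}$, where $k$ and $l$ count the occurrences of each letter. Then
\[
n^{-1}\mathrm{Tr}\big(D_n(\theta)^kD_n(\theta)^{*l}\big)=n^{-1}\sum_{j=0}^{n-1}e^{\iota\theta(k-l)j/n}\longrightarrow\int_0^1 e^{\iota\theta(k-l)u}\,du=\mathbb{E}\big[e^{\iota\theta kU}\,\overline{e^{\iota\theta U}}^{\,l}\big].
\]
This follows either from the Riemann sum argument with the bounded continuous function $z\mapsto z^k\bar z^l$ on the unit circle, or directly from the geometric series formula for $k\neq l$ (the case $k=l$ is trivial).

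It remains to construct the limit NCP. Take $\mathcal{A}$ to be the commutative $*$-algebra generated by the random variable $d(\theta):=e^{\iota\theta U}$ (polynomials in $e^{\iota\theta U}$ and its conjugate), with $\varphi=\mathbb{E}$; this satisfies $\varphi(1)=1$. Equivalently, define $\varphi$ on the abstract $*$-algebra generated by $d(\theta)$ as the limit of $n^{-1}\mathrm{Tr}$ on the corresponding monomials in $D_n(\theta)$. That these limits exist and are consistent with the commutation relations is exactly the computation above. Linearity extends the convergence from monomials to all polynomials, which proves $\varphi(m(d(\theta)))=\mathbb{E}[m(e^{\iota\theta U})]$.

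There is no real obstacle here. The only point needing care is to observe that normality (commutativity) lets every $*$-monomial collapse to $z^k\bar z^l$, so the limiting state is well defined and is given by the uniform-average moments.
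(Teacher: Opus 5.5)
Your proposal is correct and follows the same route as the paper's appendix proof. Both identify the ESD as the empirical law of $e^{\iota\theta j/n}$ and pass to the uniform limit, then reduce each $*$-monomial to $D_n(\theta)^kD_n(\theta)^{*l}$ and evaluate $n^{-1}\sum_j e^{\iota\theta(k-l)j/n}$ (Riemann sum or geometric series) as converging to $\mathbb{E}[e^{\iota\theta(k-l)U}]$. Your use of the continuous mapping theorem, where the paper takes a pointwise limit of distribution functions that strictly holds only at continuity points, and your explicit realization of the limit space as the algebra generated by $e^{\iota\theta U}$ under $\mathbb{E}$, are minor tidy-ups rather than a different argument.
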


\begin{theorem}\label{thm21}
     Let $C_n$ be a random circulant matrix with independent standard Gaussian entries, and 
     Let $D_n(\theta)$ be as in (\ref{eqn:defndn}). Then there exists a $*$-probability space $(\mathcal{A}, \varphi)$ so that for any  monomial $Q$,
    \begin{equation*}
        \lim_{n\rightarrow\infty}\varphi_n(Q(n^{-1/2}C_n,D_n(\theta))){\rightarrow} \varphi(Q(N,d(\theta))),
    \end{equation*}
    where the distributions ($*$-moments) of $N$ and $d(\theta)$ are respectively those of a standard complex Gaussian variable and as in Lemma \ref{lemma1} respectively.
\end{theorem}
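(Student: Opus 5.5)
We compute the mixed moments directly. A monomial in $n^{-1/2}C_n, n^{-1/2}C_n^*, D_n, D_n^*$ has the form
\[
\varphi_n\bigl(D^{(0)} A_1 D^{(1)} A_2 \cdots A_k D^{(k)}\bigr),
\]
where each $A_j\in\{n^{-1/2}C_n,n^{-1/2}C_n^*\}$ and each $D^{(j)}$ is a word in $D_n,D_n^*$. Since $D_n$ and $D_n^*=D_n^{-1}$ are commuting diagonal matrices, each $D^{(j)}$ equals $D_n^{s_j}$ for some integer $s_j$. Cyclicity of the trace lets us merge $D^{(0)}$ into $D^{(k)}$. If $k=0$, Lemma \ref{lemma1} applies directly. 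Otherwise we write out the trace using $(C_n)_{ab}=c_{(b-a)\bmod n}$ and $(C_n^*)_{ab}=c_{(a-b)\bmod n}$ (the entries are real):
\[
n^{-1-k/2}\sum_{i_1,\dots,i_k}\eta^{\sum_j s_j i_{j+1}}\,\mathbb{E}\prod_{j=1}^k c_{\epsilon_j(i_{j+1}-i_j)\bmod n},
\]
with $i_{k+1}=i_1$ and $\epsilon_j=\pm1$ according to whether $A_j$ is $C_n$ or $C_n^*$.

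Next we use the standard circulant combinatorics, as in the proof of (\ref{jcc}) in \cite{Bose2026}. Substitute $t_j=i_{j+1}-i_j \bmod n$. The free indices are then $i_1$ and the $t_j$, subject to the cyclic constraint $\sum_j t_j\equiv 0 \pmod n$. The contributions surviving in the limit come from pair-matchings of the $c$'s in which every pair has one $C$ and one $C^*$, i.e. the pair has opposite $\epsilon$ and equal $t$-values. For such a matching the constraint $\sum\epsilon_j t_j\equiv0$ is automatic, so the count has $k/2+1$ free parameters, and the Gaussian expectations equal $1$ exactly. Matchings that are not of this opposite-sign form lose a degree of freedom and contribute $O(n^{-1})$. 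Higher-order ties among the $t$'s also contribute $O(n^{-1})$, using the bounded moments under Assumption I.

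With pair partitions of this type fixed, we evaluate the weighted sum. Each $i_{j+1}$ equals $i_1+\sum_{l\le j}t_l \bmod n$. The phase is $\eta^{\sum s_j i_{j+1}}$ with $\eta=e^{\iota\theta/n}$. We scale $x=i_1/n$, $y_r=t_r/n$ (one $y_r$ per block) and read the phase as $\exp\bigl(\iota\theta\sum_j s_j\{x+\sum_{l\le j} y_{\cdot}\}\bigr)$, where $\{\cdot\}$ denotes the fractional part. The reduction mod $n$ matters because $\eta^n=e^{\iota\theta}\neq1$ in general. The normalized sum is then a Riemann sum, and it converges to
\[
\sum_{\pi}\int_{[0,1]^{k/2+1}}\exp\Bigl(\iota\theta\sum_{j} s_j\bigl\{x+\textstyle\sum_{l\le j} y_{\pi(l)}\bigr\}\Bigr)\,dx\,dy .
\]
The integrand is bounded and piecewise continuous, with discontinuities on finitely many hyperplanes, so Riemann-sum convergence holds. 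The limit therefore exists for every monomial.

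Finally, we construct $(\mathcal A,\varphi)$. Take $\mathcal A=\mathbb C\langle x_1,x_2\rangle$ with $*$-structure, and define $\varphi(Q)$ as the limit just computed. It is linear, satisfies $\varphi(1)=1$, and is positive as a pointwise limit of the positive functionals $\varphi_n$. Set $N=x_1$ and $d(\theta)=x_2$. For monomials in $x_1$ alone, all $s_j=0$ and the limit counts the matchings above, which gives the complex Gaussian moments, as in (\ref{jcc}). For monomials in $x_2$ alone, Lemma \ref{lemma1} gives $\mathbb E[m(e^{\iota\theta U})]$.

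The main obstacle is the second step. We must make sure that the phase factors $\eta^{s_j i}$, which have modulus one but can be non-cancelling, do not spoil the negligibility bounds. They do not, because those bounds are applied with absolute values. We must also handle the wrap-around mod $n$ correctly in the Riemann-sum limit; this is where the fractional parts, rather than plain sums, enter the limiting integral.
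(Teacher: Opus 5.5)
Your proposal follows essentially the paper's route: expand $\varphi_n$ over indices, keep only the pairings that match each $C_n$ with a $C_n^*$ (the others lose a free difference variable), count $k/2+1$ free parameters, and pass to a Riemann-sum limit. This suffices for the statement, which only asserts that the limits exist and identifies the two marginals. Defining $\varphi$ on $\mathbb{C}\langle x_1,x_2\rangle$ by these limits makes explicit what Remark \ref{rem21} leaves implicit. Since the entries are Gaussian, Isserlis' formula, which the paper uses, is exact, so your separate treatment of higher-order ties is unnecessary but harmless.

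There is a sign slip you should fix. With $t_j=i_{j+1}-i_j \bmod n$, a $C_n$ in position $a$ carries $c_{t_a}$, while a $C_n^*$ in position $b$ carries $c_{(-t_b)\bmod n}$. So a mixed pair $\{a,b\}$ forces $t_b\equiv -t_a$, not $t_b=t_a$. The constraint that becomes automatic is $\sum_j t_j\equiv 0$, not $\sum_j\epsilon_j t_j\equiv 0$. Hence the $C_n^*$ member of each block must contribute $-y_r$ (equivalently $1-y_r$) to the partial sums in your integrand. As written, the $j=k$ term has argument $\{x+2\sum_r y_r\}$, although $i_{k+1}=i_1$ forces it to be $x$. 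For $k\ge 4$ this changes the value in general; existence of the limit is unaffected.

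Once this is corrected, your treatment of the wrap-around is the right one: take the fractional part of each index separately and integrate over the full cube. On this point it is sounder than the paper's display (\ref{cdjm}). That display reduces the whole linear form mod $1$ and inserts indicators $\mathbb{I}\{0\le l_r\le 1\}$, which discard wrapped configurations. With all $k_r=0$ it would give $\varphi(N^2N^{*2})=1+\tfrac23$ instead of $2$.
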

\begin{remark}\label{rem21} In Theorem \ref{thm21}, the definition of $\varphi$, and hence of the ``dependence'' between $N$ and $d(\theta)$, has not been made explicit. Any joint moment of these variables is defined as the value of the limit of the corresponding moment of matrices $n^{-1/2} C_n$ and $D_n$. This essentially defines $\varphi$. It follows from some lower order moment computations that $N$ and $d(\theta)$ are not 
free, independent or half independent. 
\end{remark}

Before we give the proof, let us discuss some preliminaries.
Let $\mathcal{P}_2(2k)$ be the set of all pair-partitions of $\{1, 2,\dots, 2k\}$.  
For any matrix $A$, $A^*$ denotes its complex conjugate.
Let $\epsilon_1,\dots,\epsilon_p\in\{1,*\}$ and $q_1,\dots,q_p\in\mathbb{N}\cup\{0\}$ for $p\ge1$. Let us write $D_n$ for $D_n(\theta)$. Let 
\begin{equation*}
    D_n^{(r)}(\theta)=D^{\epsilon_1}_n\dots D^{\epsilon_{q_r}}_n,
\end{equation*} 
where 
$D_n^{(r)}=I$ for $q_r=0$. We denote the $(i,j)$th entries of $C_n^{\epsilon_r}$ and $D_n^{(r)}$ by $c_{ij}^{\epsilon_r}$ and $d^{(r)}_{ij}$, respectively. Note that $D_n^{(r)}=D_n^{k_r}$ for some $k_r\in\mathbb{Z}$ for each $r=1,\dots,p$. Due to the traciality of $\varphi_n$, it is enough to consider monomials in $n^{-1/2}C_n, n^{-1/2}C_n^*, D_n$, and $D_n^*$ that are of the form  $n^{-p/2}C_n^{\epsilon_1}D_n^{(1)}C_n^{\epsilon_2}D_n^{(2)}\dots C_n^{\epsilon_p}D_n^{(p)}$, and show that limit of their $\mathbb{E}{\rm Tr}$ exists. This is calculated by splitting the trace into sums over several types of partitions. By following standard arguments (see, for example, the proofs of Theorems 9.1.1 and 10.2.1 of \cite{Bose2022}), \textit{only pair partitions potentially contribute to the limit}. We omit the details. Hence, this limit is $0$ if $p$ is odd. 
\vskip5pt 
 
 First, consider $p=2$. Then,  
    \begin{equation*}        \lim_{n\rightarrow\infty}\varphi_n(n^{-1}C_n^{\epsilon_1}D_n^{(1)} C_n^{\epsilon_{2}}D_n^{(2)})=\lim_{n\rightarrow\infty}\frac{1}{n^2}\sum_{i_1,i_2=0}^{n-1}\mathbb{E}[c_{i_1i_2}^{\epsilon_1}c_{i_2i_1}^{\epsilon_2}]d_{i_2}^{(1)}d_{i_1}^{(2)}=\begin{cases}
       0,\  \epsilon_1=\epsilon_2,\\
       \mathbb{E}d^{(1)}\mathbb{E}d^{(2)},\ \epsilon_1\ne\epsilon_2.
    \end{cases}
    \end{equation*}
Now, consider $p=4$. Then, 
    \begin{equation}\label{mom4cal}        \varphi_n(n^{-2}C_n^{\epsilon_1}D_n^{(1)}C_n^{\epsilon_2}D_n^{(2)}C_n^{\epsilon_3}D_n^{(3)}C_n^{\epsilon_4}D_n^{(4)})=\frac{1}{n^3}\hspace{-0.1in}\sum_{i_1,i_2,i_3,i_4=0}^{n-1}\hspace{-0.2in}\mathbb{E}[c_{i_1i_2}^{\epsilon_1}c_{i_2i_3}^{\epsilon_2}c_{i_3i_4}^{\epsilon_3}c_{i_4i_1}^{\epsilon_4}]d_{i_2}^{(1)}d_{i_3}^{(2)}d_{i_4}^{(3)}d_{i_1}^{(4)}.
    \end{equation}
    As the entries are Gaussian, by using Isserlis' formula (see \cite{Bose2022}) for the moments of products of Gaussian variables, we get
    \begin{align*}
        \mathbb{E}[c_{i_1i_2}^{\epsilon_1}c_{i_2i_3}^{\epsilon_2}c_{i_3i_4}^{\epsilon_3}c_{i_4i_1}^{\epsilon_4}]&=\mathbb{E}[c_{i_1i_2}^{\epsilon_1}c_{i_2i_3}^{\epsilon_2}]\mathbb{E}[c_{i_3i_4}^{\epsilon_3}c_{i_4i_1}^{\epsilon_4}]+\mathbb{E}[c_{i_1i_2}^{\epsilon_1}c_{i_3i_4}^{\epsilon_3}]\mathbb{E}[c_{i_2i_3}^{\epsilon_2}c_{i_4i_1}^{\epsilon_4}]+\mathbb{E}[c_{i_1i_2}^{\epsilon_1}c_{i_4i_1}^{\epsilon_4}]\mathbb{E}[c_{i_2i_3}^{\epsilon_2}c_{i_3i_4}^{\epsilon_3}].
    \end{align*}
      Hence, we have the following two cases:
    
\paragraph{Case I} For $\pi\in\mathcal{P}_2(4)=\{\{(1,2), (3,4)\}, \{(1,3), (2,4)\}, \{(1,4), (2,3)\}\}$, we have $\epsilon_s=\epsilon_t$ for at least one $(s,t)\in\pi$. 

Let us consider the term $\mathbb{E}[c_{i_1i_2}^{\epsilon_1}c_{i_2i_3}^{\epsilon_2}]\mathbb{E}[c_{i_3i_4}^{\epsilon_3}c_{i_4i_1}^{\epsilon_4}]$. If $\epsilon_1=\epsilon_2$ and $\epsilon_3\neq \epsilon_4$, without loss of generality say $\epsilon_3=1$ and $\epsilon_4=*$, then $\mathbb{E}[c_{i_1i_2}^{\epsilon_1}c_{i_2i_3}^{\epsilon_2}]\mathbb{E}[c_{i_3i_4}^{\epsilon_3}c_{i_4i_1}^{\epsilon_4}]$ is non-zero if and only if (iff) $i_2-i_1=i_3-i_2$ (mod $n$) and $i_4-i_3=i_4-i_1$ (mod $n$). So, $i_1=i_3$ and $2(i_2-i_1)=an$ for some $a\in\{-1,0,1\}$, which implies $i_2=an/2+i_1$. Thus,
\begin{equation*}
    \lim_{n\rightarrow\infty}\frac{1}{n^3}\sum_{i_1,i_2,i_3,i_4=0}^{n-1}\mathbb{E}[c_{i_1i_2}^{\epsilon_1}c_{i_2i_3}^{\epsilon_1}]\mathbb{E}[c_{i_3i_4}c_{i_1i_4}]d_{i_2}^{(1)}d_{i_3}^{(2)}d_{i_4}^{(3)}d_{i_1}^{(4)}=\lim_{n\rightarrow\infty}\frac{1}{n^3}\sum_{i_1,i_4}d_{an/2+i_1}^{(1)}d_{i_1}^{(2)}d_{i_4}^{(3)}d_{i_1}^{(4)}=0.
\end{equation*} 
Similarly,
\begin{equation*}
    \lim_{n\rightarrow\infty}\frac{1}{n^3}\sum_{i_1,i_2,i_3,i_4=0}^{n-1}\mathbb{E}[c_{i_1i_2}c_{i_3i_2}]\mathbb{E}[c_{i_3i_4}^{\epsilon_3}c_{i_4i_1}^{\epsilon_3}]d_{i_2}^{(1)}d_{i_3}^{(2)}d_{i_4}^{(3)}d_{i_1}^{(4)}=0.
\end{equation*}

Let $\epsilon_1=\epsilon_4\in\{1,*\}$, $\epsilon_2=1$ and $\epsilon_3=*$. Then, the third term, $\mathbb{E}[c_{i_1i_2}^{\epsilon_1}c_{i_4i_1}^{\epsilon_4}]\mathbb{E}[c_{i_2i_3}^{\epsilon_2}c_{i_3i_4}^{\epsilon_3}]$ is non-zero iff $i_2-i_1=i_1-i_4$ (mod $n$) and $i_2=i_4$ which implies $i_1=i_2-\tilde{a}n/2$ for some $\tilde{a}\in\{-1,0,1\}$. Thus,
\begin{equation*}
    \lim_{n\rightarrow\infty}\frac{1}{n^3}\sum_{i_1,i_2,i_3,i_4=0}^{n-1}\mathbb{E}[c_{i_1i_2}^{\epsilon_1}c_{i_4i_1}^{\epsilon_1}]\mathbb{E}[c_{i_2i_3}c_{i_4i_3}]d_{i_2}^{(1)}d_{i_3}^{(2)}d_{i_4}^{(3)}d_{i_1}^{(4)}=\lim_{n\rightarrow\infty}\frac{1}{n^3}\sum_{i_2,i_3=0}^{n-1}d_{i_2}^{(1)}d_{i_3}^{(2)}d_{i_2}^{(3)}d_{i_2-\tilde{a}n/2}^{(4)}=0.
\end{equation*}

Now, consider the second term $\mathbb{E}[c_{i_1i_2}^{\epsilon_1}c_{i_3i_4}^{\epsilon_3}]\mathbb{E}[c_{i_2i_3}^{\epsilon_2}c_{i_4i_1}^{\epsilon_4}]$, and the case $\epsilon_1=\epsilon_3$, $\epsilon_2=1$ and $\epsilon_4=*$. Then, it is non-zero iff for some $a_1,a_2\in\{-1,0,1\}$, $i_2-i_1-i_4+i_3=a_1n$ and $i_3-i_2-i_4+i_1=a_2n$, which implies
$i_4=i_3-i_2+i_1-a_2n=i_2-i_1+i_3-a_1n$. Thus, $i_2=(a_1-a_2)n/2+i_1$, and
\begin{align*}
    \lim_{n\rightarrow\infty}\frac{1}{n^3}\sum_{i_1,i_2,i_3,i_4=0}^{n-1}\mathbb{E}[c_{i_1i_2}^{\epsilon_1}c_{i_3i_4}^{\epsilon_1}]&\mathbb{E}[c_{i_2i_3}c_{i_1i_4}]d_{i_2}^{(1)}d_{i_3}^{(2)}d_{i_4}^{(3)}d_{i_1}^{(4)}\\
    &=\lim_{n\rightarrow\infty}\frac{1}{n^3}\sum_{i_1,i_3=0}^{n-1}d_{(a_1-a_2)n/2+i_1}^{(1)}d_{i_3}^{(2)}d_{i_3-(a_1+a_2)n/2}^{(3)}d_{i_1}^{(4)}=0.
\end{align*}

Let $\epsilon_1=\epsilon_2=\epsilon_3=\epsilon_4$. Then, $\mathbb{E}[c_{i_1i_2}^{\epsilon_1}c_{i_2i_3}^{\epsilon_2}]\mathbb{E}[c_{i_3i_4}^{\epsilon_3}c_{i_4i_1}^{\epsilon_4}]$ and $\mathbb{E}[c_{i_1i_2}^{\epsilon_1}c_{i_4i_1}^{\epsilon_4}]\mathbb{E}[c_{i_2i_3}^{\epsilon_2}c_{i_3i_4}^{\epsilon_3}]$ are non-zero iff $i_2=(a_1n+i_1+i_3)/2$, $i_4=(a_2n+i_1+i_3)/2$ and $i_1=(i_2+i_4-\tilde{a}_1n)/2$, $(i_2+i_4+\tilde{a}_2n)/2$, respectively, for some $a_1,a_2,\tilde{a}_1,\tilde{a_2}\in\{-1,0,1\}$. Also, $\mathbb{E}[c_{i_1i_2}^{\epsilon_1}c_{i_3i_4}^{\epsilon_3}]\mathbb{E}[c_{i_2i_3}^{\epsilon_2}c_{i_4i_1}^{\epsilon_4}]$ is non-zero iff $i_4=i_2-i_1+i_3-a_1n=a_2n-i_3+i_2+i_1$ and $i_3=i_1+(a_1+a_2)n/2$. Hence, for each term the sum in (\ref{mom4cal}) is of order $n^2$. Therefore, all such terms in (\ref{mom4cal}) tend to $0$ as $n\rightarrow\infty$.

\paragraph{Case II} Let us now assume that for a fix $\pi\in\mathcal{P}_2(4)$, $\epsilon_s\neq \epsilon_t$ for all $(s,t)\in\pi$. Note that 
$
    \mathbb{E}[c_{i_1i_2}^{\epsilon_1}c_{i_2i_3}^{\epsilon_2}c_{i_3i_4}^{\epsilon_3}c_{i_4i_1}^{\epsilon_4}]=\sum_{\pi\in\mathcal{P}_2(4)}\prod_{(s,t)\in\pi}\mathbb{E}[c_{i_si_{s+1}}^{\epsilon_s}c_{i_ti_{t+1}}^{\epsilon_t}]$. 
Then, for $\pi=\{(1,2), (3,4)\}$ and $\{(1,4), (2,3)\}$, $\mathbb{E}[c_{i_1i_2}^{\epsilon_1}c_{i_2i_3}^{\epsilon_2}]\mathbb{E}[c_{i_3i_4}^{\epsilon_3}c_{i_4i_1}^{\epsilon_4}]$ and $\mathbb{E}[c_{i_1i_2}^{\epsilon_1}c_{i_4i_1}^{\epsilon_4}]\mathbb{E}[c_{i_2i_3}^{\epsilon_2}c_{i_3i_4}^{\epsilon_3}]$ are non-zero iff $i_1=i_3$ and $i_2=i_4$, respectively. Also, for $\pi=\{(1,3), (2,4)\}$, $\mathbb{E}[c_{i_1i_2}^{\epsilon_1}c_{i_3i_4}^{\epsilon_3}]\mathbb{E}[c_{i_2i_3}^{\epsilon_2}c_{i_4i_1}^{\epsilon_4}]$ is non-zero iff $i_4=i_1-i_2+i_3$ (mod $n$). In this case, (\ref{mom4cal}) converges to some non-zero limit as $n\rightarrow\infty$. 

\vskip5pt

We further elaborate 
with the following examples:

\begin{example}\label{exmp21}
Let $\epsilon_1=\epsilon_3=1$ and $\epsilon_2=\epsilon_4=*$. Then,
\begin{align*}
    \lim_{n\rightarrow\infty}\frac{1}{n^3}\sum_{i_1,i_2,i_3,i_4=0}^{n-1}\hspace{-0.1in}\mathbb{E}[c_{i_1i_2}^{\epsilon_1}c_{i_3i_4}^{\epsilon_3}]\mathbb{E}[c_{i_2i_3}^{\epsilon_2}c_{i_4i_1}^{\epsilon_4}]d_{i_2}^{(1)}d_{i_3}^{(2)}d_{i_4}^{(3)}d_{i_1}^{(4)}&=0 \ \text{(by arguments in Case I)},\\
\lim_{n\rightarrow\infty}\frac{1}{n^3}\sum_{i_1,i_2,i_3,i_4=0}^{n-1}\hspace{-0.1in}\mathbb{E}[c_{i_1i_2}^{\epsilon_1}c_{i_2i_3}^{\epsilon_2}]\mathbb{E}[c_{i_3i_4}^{\epsilon_3}c_{i_4i_1}^{\epsilon_4}]d_{i_2}^{(1)}d_{i_3}^{(2)}d_{i_4}^{(3)}d_{i_1}^{(4)}&=\lim_{n\rightarrow\infty}\frac{1}{n^3}\sum_{i_1,i_2,i_4=0}^{n-1}d_{i_2}^{(1)}d_{i_1}^{(2)}d_{i_4}^{(3)}d_{i_1}^{(4)}\\
    &=\mathbb{E}[d^{(1)}]\mathbb{E}[d^{(3)}]\mathbb{E}[d^{(2)}d^{(4)}],\\
    \lim_{n\rightarrow\infty}\frac{1}{n^3}\sum_{i_1,i_2,i_3,i_4=0}^{n-1}\hspace{-0.1in}\mathbb{E}[c_{i_1i_2}^{\epsilon_1}c_{i_4i_1}^{\epsilon_4}]\mathbb{E}[c_{i_2i_3}^{\epsilon_2}c_{i_3i_4}^{\epsilon_3}]d_{i_2}^{(1)}d_{i_3}^{(2)}d_{i_4}^{(3)}d_{i_1}^{(4)}&=\lim_{n\rightarrow\infty}\frac{1}{n^3}\sum_{i_1,i_2,i_3=0}^{n-1}d_{i_2}^{(1)}d_{i_3}^{(2)}d_{i_2}^{(3)}d_{i_1}^{(4)}\\
    &=\mathbb{E}[d^{(1)}d^{(3)}]\mathbb{E}[d^{(2)}]\mathbb{E}[d^{(4)}].\\
    \end{align*}
Thus,
\begin{equation*}    \lim_{n\rightarrow\infty}\varphi_n(n^{-2}C_nD_n^{(1)}C_n^{*}D_n^{(2)}C_nD_n^{(3)}C_n^{*}D_n^{(4)})=\mathbb{E}[d^{(1)}]\mathbb{E}[d^{(3)}]\mathbb{E}[d^{(2)}d^{(4)}]+\mathbb{E}[d^{(1)}d^{(3)}]\mathbb{E}[d^{(2)}]\mathbb{E}[d^{(4)}].
\end{equation*}
\end{example}
\begin{example}\label{exmp22}
    If $\epsilon_1=\epsilon_2=1$ and $\epsilon_3=\epsilon_4=*$, then
    \begin{align*}
    \lim_{n\rightarrow\infty}\frac{1}{n^3}\sum_{i_1,i_2,i_3,i_4=0}^{n-1}\mathbb{E}[c_{i_1i_2}^{\epsilon_1}c_{i_2i_3}^{\epsilon_2}]\mathbb{E}[c_{i_3i_4}^{\epsilon_3}c_{i_4i_1}^{\epsilon_4}]d_{i_2}^{(1)}d_{i_3}^{(2)}d_{i_4}^{(3)}d_{i_1}^{(4)}&=0\\
    \lim_{n\rightarrow\infty}\frac{1}{n^3}\sum_{i_1,i_2,i_3,i_4=0}^{n-1}\mathbb{E}[c_{i_1i_2}^{\epsilon_1}c_{i_4i_1}^{\epsilon_4}]\mathbb{E}[c_{i_2i_3}^{\epsilon_2}c_{i_3i_4}^{\epsilon_3}]d_{i_2}^{(1)}d_{i_3}^{(2)}d_{i_4}^{(3)}d_{i_1}^{(4)} &=\mathbb{E}[d^{(1)}d^{(3)}]\mathbb{E}[d^{(2)}]\mathbb{E}[d^{(4)}],
\end{align*}
and
\begin{align*}
    \lim_{n\rightarrow\infty}\frac{1}{n^3}\sum_{i_1,i_2,i_3,i_4=0}^{n-1}&\mathbb{E}[c_{i_1i_2}^{\epsilon_1}c_{i_3i_4}^{\epsilon_3}]\mathbb{E}[c_{i_2i_3}^{\epsilon_2}c_{i_4i_1}^{\epsilon_4}]d_{i_2}^{(1)}d_{i_3}^{(2)}d_{i_4}^{(3)}d_{i_1}^{(4)}\\
    &=\lim_{n\rightarrow\infty}\frac{1}{n^3}\sum_{i_1,i_2,i_3=0}^{n-1}d_{i_2}^{(1)}d_{i_3}^{(2)}d_{i_1-i_2+i_3\,(\text{mod $n$})}^{(3)}d_{i_1}^{(4)}\mathbb{I}\{0\leq i_1-i_2+i_3\leq n\},
\end{align*}
where we have used $i_2-i_1\,(\text{mod $n$})=i_3-i_4\,(\text{mod $n$})$, and $\mathbb{I}$ denotes the indicator.
Note that there exist $k_1,k_2,k_3,k_4\in\mathbb{Z}$ such that $d_j^{(r)}=e^{\iota\theta k_rj/n}$ for each $r=1,2,3,4$. Thus, 
\begin{align*}
    \lim_{n\rightarrow\infty}\frac{1}{n^3}\sum_{i_1,i_2,i_3=0}^{n-1}&d_{i_2}^{(1)}d_{i_3}^{(2)}d_{i_1-i_2+i_3\,(\text{mod $n$})}^{(3)}d_{i_1}^{(4)}\mathbb{I}\{0\leq i_1-i_2+i_3\leq n-1\}\\
    &=\lim_{n\rightarrow\infty}\frac{1}{n^3}\sum_{i_1,i_2,i_3=0}^{n-1}\exp\big(\iota\frac{\theta}{n}(k_1i_2+k_2i_3+k_4i_1+k_3(i_1-i_2+i_3\,(\text{mod $n$})))\big)\\
    &\hspace{7cm}\cdot\mathbb{I}\{0\leq \frac{i_1}{n}-\frac{i_2}{n}+\frac{i_3}{n}\leq 1-\frac{1}{n}\}\\
    &=\lim_{n\rightarrow\infty}\frac{1}{n^3}\sum_{i_1,i_2,i_3=0}^{n-1}\exp\big(\iota\frac{\theta}{n}((k_3+k_4)i_1+(k_1-k_3)i_2+(k_2+k_3)i_3\,(\text{mod $n$}))\big)\\
    &\hspace{7cm}\cdot\mathbb{I}\{0\leq \frac{i_1}{n}-\frac{i_2}{n}+\frac{i_3}{n}\leq 1-\frac{1}{n}\}\\
    &=\int_{[0,1]^3}\exp\big(\iota\theta((k_3+k_4)x_1+(k_1-k_3)x_2+(k_2+k_3)x_3\,(\text{mod $1$}))\big)\\
    &\hspace{6.5cm}\cdot\mathbb{I}\{0\leq x_1-x_2+x_3\leq 1\}\,\mathrm{d}x_1\,\mathrm{d}x_2\,\mathrm{d}x_3.
\end{align*}
\end{example}
Keeping the above discussion in mind, we now give the proof.  
\begin{proof}[{\bf Proof of Theorem \ref{thm21}}]
     For $p\ge1$, we have
    \begin{align*}
        \varphi_n(n^{-p/2}C^{\epsilon_1}_nD^{(1)}_n\dots C^{\epsilon_{p}}_nD^{(p)}_n)&=\frac{1}{n^{1+p/2}}\sum_{i_1,\dots,i_{p}=0}^{n-1}\mathbb{E}(c_{i_1i_2}^{\epsilon_1}d_{i_2}^{(1)}c_{i_2i_3}^{\epsilon_2}d_{i_3}^{(2)}\dots c_{i_pi_1}^{\epsilon_p}d_{i_1}^{(p)})\\
        &=\frac{1}{n^{1+p/2}}\sum_{i_1,\dots,i_{p}=0}^{n-1}\mathbb{E}(c_{i_1i_2}^{\epsilon_1}c_{i_2i_3}^{\epsilon_2}\dots c_{i_pi_1}^{\epsilon_p})d_{i_2}^{(1)}d_{i_3}^{(2)}\dots d_{i_1}^{(p)}.
    \end{align*}
    For odd $p$,  $\mathbb{E}(c_{i_1i_2}^{\epsilon_1}c_{i_2i_3}^{\epsilon_2}\dots c_{i_pi_1}^{\epsilon_p})=0$. 
    Let $p=2m$, $m\ge1$. By Isserlis' formula, 
    \begin{equation}\label{pf11}
        \hspace{-0.1in}\varphi_n(n^{-m}C^{\epsilon_1}_nD^{(1)}_n\dots C^{\epsilon_{2m}}_nD^{(2m)}_n)=\frac{1}{n^{1+m}}\sum_{i_1,\dots,i_{2m}=0}^{n-1}\sum_{\pi\in\mathcal{P}_2(2m)}\prod_{(s,t)\in\pi}\hspace{-0.1in}\mathbb{E}(c_{i_si_{s+1}}^{\epsilon_s}c_{i_ti_{t+1}}^{\epsilon_t})\prod_{r=1}^{2m}d^{(r)}_{i_{r+1}},
    \end{equation}
    where $i_{2m+1}=i_1$. 
Note that
    \begin{equation}\label{cjmom}
        \mathbb{E}(c_{i_si_{s+1}}^{\epsilon_s}c_{i_ti_{t+1}}^{\epsilon_t})=\delta_{\epsilon_s\epsilon_t}\delta_{i_{s+1}-i_{s}-i_{t+1}+i_{t}\,(\text{mod}\,n),0}+(1-\delta_{\epsilon_s\epsilon_t})\delta_{i_{s+1}-i_{s}+i_{t+1}-i_{t}\,(\text{mod}\,n),0},
    \end{equation}
    where $\delta_{i,j}=\mathbb{I}\{i=j\}$ is the Kronecker delta. Let 
    \begin{equation}\label{pf15}
        \alpha_r=i_{r+1}-i_r\,\text{(mod $n$)}
    \end{equation}
    for each $r=1,2,\dots,2m$. Then,
    \begin{equation*}
        \mathbb{E}(c_{i_si_{s+1}}^{\epsilon_s}c_{i_ti_{t+1}}^{\epsilon_t})=\delta_{\epsilon_s\epsilon_t}\delta_{\alpha_s-\alpha_t,0}+(1-\delta_{\epsilon_s\epsilon_t})\delta_{\alpha_s+\alpha_t,0}.
    \end{equation*}
    Thus, for $\pi\in\mathcal{P}_2(2m)$, 
    \begin{align}
        \prod_{(s,t)\in\pi}\mathbb{E}(c_{i_si_{s+1}}^{\epsilon_s}c_{i_ti_{t+1}}^{\epsilon_t})&=\prod_{(s,t)\in\pi}[\delta_{\epsilon_s\epsilon_t}\delta_{\alpha_s-\alpha_t,0}+(1-\delta_{\epsilon_s\epsilon_t})\delta_{\alpha_s+\alpha_t,0}]\nonumber\\
        &=\prod_{(s,t)\in\pi}[\delta_{\epsilon_s\epsilon_{\pi(s)}}\delta_{\alpha_s-\alpha_{\pi(s)},0}+(1-\delta_{\epsilon_s\epsilon_{\pi(s)}})\delta_{\alpha_s+\alpha_{\pi(s)},0}]\nonumber\\
        &=\prod_{r=1}^{2m}[\delta_{\epsilon_r\epsilon_{\pi(r)}}\delta_{\alpha_r-\alpha_{\pi(r)},0}+(1-\delta_{\epsilon_r\epsilon_{\pi(r)}})\delta_{\alpha_r+\alpha_{\pi(r)},0}].\label{pf12}
    \end{align}
    On substituting (\ref{pf12}) in (\ref{pf11}), we get
    \begin{align}        
    \lim_{n\rightarrow\infty}&\varphi_n(n^{-m}C^{\epsilon_1}_nD^{(1)}_n\dots C^{\epsilon_{2m}}_nD^{(2m)}_n)\nonumber\\        &\hspace{-0.2in}=\sum_{\pi\in\mathcal{P}_2(2m)}\lim_{n\rightarrow\infty}\frac{1}{n^{1+m}}\sum_{i_1,\dots,i_{2m}=0}^{n-1}\prod_{r=1}^{2m}[\delta_{\epsilon_r\epsilon_{\pi(r)}}\delta_{\alpha_r-\alpha_{\pi(r)},0}+(1-\delta_{\epsilon_r\epsilon_{\pi(r)}})\delta_{\alpha_r+\alpha_{\pi(r)},0}]d_{i_{r+1}}^{(r)}.\label{pf13}
\end{align}

Let us now define 
\begin{equation*}
    \epsilon_{s,t}=\begin{cases}
        +1,\ \epsilon_s=\epsilon_t,\\
        -1,\ \epsilon_t\neq \epsilon_t.
    \end{cases}
\end{equation*}
Then, for a fix $\pi\in\mathcal{P}_2(2m)$, (\ref{pf12}) is non-zero iff
\begin{equation}\label{pf18}
    \alpha_s=\epsilon_{s,t}\alpha_t\,(\text{mod}\,n)\ \ \text{for each $(s,t)\in\pi$}.
\end{equation}
Also, from (\ref{pf15}), we have
\begin{equation}\label{pf16}
    i_r=i_1+\sum_{k=1}^{r-1}\alpha_k\,(\text{mod $n$})\ \ \text{iff}\ \  \sum_{r=1}^{2m}\alpha_r=0\,(\text{mod $n$}).
\end{equation}
Note that using (\ref{pf18}),
\begin{align}
    \sum_{r=1}^{2m}\alpha_r=\sum_{(s,t)\in\pi}(\alpha_s+\alpha_t)&=\sum_{(s,t)\in\pi}(\alpha_s+\epsilon_{s,t}\alpha_s\,(\text{mod $n$}))\nonumber\\
    &=2\sum_{(s,t)\in\pi:\,\epsilon_{s,t}=+1}\alpha_s\,(\text{mod $n$}).\label{p17}
\end{align}
From (\ref{p17}), it follows that if each $\epsilon_{s,t}=-1$ then the consistency condition in (\ref{pf16}) is satisfied and we have $m$-many independent $\alpha_r$'s, and each $i_r$ can be determine by the  $i_1$ and $\alpha_1,\dots,\alpha_{2m}$. As each of these quantities  
have $O(n)$ possible values, the sum in  (\ref{pf13}) is $O(n^{m+1})$. Further, if $\epsilon_{s,t}=+1$ for at least one $(s,t)\in\pi$, that is, $\epsilon_r=\epsilon_{\pi(r)}$ for at least one $r\in\{1,2,\dots,2m\}$, then from (\ref{p17}),
\begin{equation*}
    \sum_{(s,t)\in\pi:\,\epsilon_{s,t}=+1}\alpha_s\,(\text{mod $n$})=0,
\end{equation*}
and in this case the number of independent $\alpha_r$'s reduces to $m-1$, and as $n \to \infty$,
\begin{equation*}
    \frac{1}{n^{1+m}}\sum_{i_1,\dots,i_{2m}=0}^{n-1}\prod_{r=1}^{2m}[\delta_{\epsilon_r\epsilon_{\pi(r)}}\delta_{\alpha_r-\alpha_{\pi(r)},0}+(1-\delta_{\epsilon_r\epsilon_{\pi(r)}})\delta_{\alpha_r+\alpha_{\pi(r)},0}]d_{i_{r+1}}^{(r)}\to 0.
\end{equation*}

For any monomial $Q=n^{-m}C_n^{\epsilon_1}D_n^{(1)}C_n^{\epsilon_2}D_n^{(2)}\dots C_n^{\epsilon_{2m}}D_n^{(2m)}$, define,
\begin{equation*}
    \Pi_Q(\mathcal{P}_2(2m))\coloneqq\{\pi\in\mathcal{P}_2(2m):\,\text{for each}\,(s,t)\in\pi,\,\epsilon_s\neq \epsilon_t\},
\end{equation*}
as the set of pair partitions corresponding to $Q$. Then, the limit (\ref{pf13}) reduces to
\begin{align}
    \lim_{n\rightarrow\infty}&\varphi_n(n^{-m}C^{\epsilon_1}_nD^{(1)}_n\dots C^{\epsilon_{2m}}_nD^{(2m)}_n)\nonumber\\
    &=\sum_{\pi\in\Pi_Q(\mathcal{P}_2(2m))}\lim_{n\rightarrow\infty}\frac{1}{n^{1+m}}\sum_{i_1,\dots,i_{2m}=0}^{n-1}\prod_{r=1}^{2m}\delta_{\alpha_r+\alpha_{\pi(r)},0}d_{i_{r+1}}^{(r)}\nonumber\\    &=\sum_{\pi\in\Pi_Q(\mathcal{P}_2(2m))}\hspace{-0.1in}\lim_{n\rightarrow\infty}\frac{1}{n^{1+m}}\sum_{i_1,\dots,i_{2m}=0}^{n-1}\prod_{r=1}^{2m}\delta_{\alpha_r+\alpha_{\pi(r)},0}\exp\big(\iota\frac{\theta}{n}(k_1i_1+\dots+k_{2m}i_{2m})\big),\label{pf19}
\end{align}
where we have used $d_j^{(r)}=e^{\iota\theta jk_r/n}$ for some $k_r\in\mathbb{Z}$. Note that (\ref{pf19}) is non-zero iff $\alpha_r=-\alpha_{\pi(r)}\,(\text{mod $n$})$, that is,
\begin{equation}\label{pf110}
    i_{r+1}-i_{r}+i_{\pi(r)+1}-i_{\pi(r)}=a_rn,\ r=1,\dots,2m,
\end{equation}
for some $a_1,\dots,a_{2m}\in\{-1,0,1\}$. We observe that for any given $\pi$, the system of equations (\ref{pf110}) has only $m$-many distinct equations with $2m$-many variables $i_1,\dots,i_{2m}$. As $\pi$ is a pair partition, we can write $(m-1)$-many indices $i_r$'s in terms of the remaining $(m+1)$ indices (called free indices). For a given $\pi\in\Pi_{Q}(\mathcal{P}_2(2m))$, suppose $i_1^{\pi},\dots,i_{m+1}^\pi\in\{i_1,\dots,i_{2m}\}$ are free indices. In particular, if $\pi$ is non-crossing, then any of the $m-1$ non-free index $i_{k'}$, $k'\in\{1,2,\dots,2m\}$
is given by $i_{k'}=i_k^\pi$ 
for some $k\in\{1,\dots,m+1\}$. For crossing $\pi$, any non-free $i_{k'}$ is some linear { combination $l_{k'}\{i_1^\pi,\dots,i_{m+1}^\pi\}$} of the free indices. Note that all free indices also have a similar (trivial) representation. 

Let $NC_2(2m)$ be the set of non-crossing pair partitions of $\{1, \ldots, 2m\}$ and $NC_2^c(2m)$ denote its complement. 
Then, for each $\pi\in\Pi_Q(\mathcal{P}_2(2m))$, there exist constants $k_1^{\pi},\dots,k_{m+1}^\pi\in\mathbb{Z}$ depending on $k_1,\dots,k_{2m}$ such that (\ref{pf19}) can be rewritten as follows:
\begin{align}
    \lim_{n\rightarrow\infty}&\varphi_n(n^{-m}C^{\epsilon_1}_nD^{(1)}_n\dots C^{\epsilon_{2m}}_nD^{(2m)}_n)\nonumber\\ 
    &=\sum_{\pi\in\Pi_Q(\mathcal{P}_2(2m))\cap NC_2(2m)}\lim_{n\rightarrow\infty}\frac{1}{n^{1+m}}\sum_{i_1^\pi,\dots,i_{m+1}^\pi=0}^{n-1}\hspace{-0.1in}\exp\big(\iota\frac{\theta}{n} (k_1^\pi i_1^\pi+\dots+k_{m+1}^\pi i_{m+1}^\pi)\big)\nonumber\\
    &\ \ +\sum_{\pi\in\Pi_Q(\mathcal{P}_2(2m))\cap NC_2^c(2m)}\lim_{n\rightarrow\infty}\frac{1}{n^{1+m}}\sum_{i_1^\pi,\dots,i_{m+1}^\pi=0}^{n-1}\hspace{-0.1in}\exp\big(\iota\frac{\theta}{n} (k_1^\pi i_1^\pi+\dots+k_{m+1}^\pi i_{m+1}^\pi)(\text{mod $n$})\big)\nonumber\\
    &\hspace{7cm}\cdot\prod_{r=1}^{2m}\mathbb{I}\{0\leq l_r\{i_1^\pi,\dots,i_{m+1}^\pi\}\leq n-1\}\nonumber\\
    &=\sum_{\pi\in\Pi_Q(\mathcal{P}_2(2m))\cap NC_2(2m)}\int_{[0, 1]^{m+1}}\hspace{-0.1in}\exp(\iota\theta(k_1^\pi x_1+\dots+k_{m+1}^\pi x_{m+1}))\,\mathrm{d}x_1\dots\,\mathrm{d}x_{m+1}\nonumber\\
    &\ \ {+\sum_{\pi\in\Pi_Q(\mathcal{P}_2(2m))\cap NC_2^c(2m)}
    \int_{[0, 1]^{m+1}}\hspace{-0.1in}\exp(\iota\theta(k_1^\pi x_1+\dots+k_{m+1}^\pi x_{m+1})(\text{mod $1$}))}\nonumber\\
    &\hspace{5cm}\cdot\prod_{r=1}^{2m}\mathbb{I}\{0\leq l_r\{x_1,\dots,x_{m+1}\}\leq 1\}\,\mathrm{d}x_1\dots\,\mathrm{d}x_{m+1}.\label{cdjm} 
\end{align}
This completes the proof of Theorem \ref{thm21}.
\end{proof}

{
\begin{remark}
     Note that for a non-crossing partition, in (\ref{pf110}), exactly $m+1$ of the indices $(i_1,\ldots,i_{2m})$ can be chosen freely, while each of the remaining $m-1$ indices equals one of these free indices. Consequently, every choice of the free indices contributes the same value to the first integral in (\ref{cdjm}). However, for a crossing partition, the remaining $m-1$ indices are instead determined as linear combinations of the free indices. Nevertheless, every choice of the free indices again gives the same value for the second integral in (\ref{cdjm}), due to the presence of the modulo operation.
\end{remark}

\begin{remark}\label{gaussianmom:1}
    Note that for each $m\ge1$, (\ref{cdjm}) is bounded by $\#\Pi_Q(\mathcal{P}_2(2m))$ (cardinality of the set $\Pi_Q(\mathcal{P}_2(2m))$). Since $ \#\Pi_Q(\mathcal{P}_2(2m))\leq \#\mathcal{P}_2(2m)$, and $\#\mathcal{P}_2(2m)$ is the $(2m)$th moment of a standard Gaussian distribution, it follows that the limiting $*$-moments of any monomial of order $2m$ in $C_n$ and $D_n$ is bounded by the $(2m)$th moment of a standard Gaussian distribution. Consequently, the moments of any fixed polynomial in $C_n$ and $D_n$ are bounded by the corresponding moments of a Gaussian distribution with an appropriate (not necessarily unit) variance. 
\end{remark}

}

\vskip5pt

{ The next two results are extensions of Theorem \ref{thm21}, and provide the joint convergence of complex circulant matrices together with $D_n(\theta)$. See Appendix for their proofs. }
\begin{corollary}\label{corl1}
    Let $\tilde{C}_n=\text{Circ}(c_0,c_1\eta,\dots,c_{n-1}\eta^{n-1})$, where $\eta=e^{\iota\pi/n}$ and $\{c_j\}_{j\ge0}$ are iid standard Gaussian. Let $D_n(\theta)$ be 
    as in (\ref{eqn:defndn}).
    Then, 
    $\{n^{-1/2}\tilde{C}_n,D_n\}$ 
    as elements of 
    $(\mathcal{M}_n,\varphi_n=n^{-1}\mathbb{E}\text{Tr})$ converge to $\{N,d(\theta)\}$, 
    where $N$ and $d(\theta)$ are as in Theorem \ref{thm21}. 
\end{corollary}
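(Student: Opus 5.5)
The plan is to rerun the proof of Theorem \ref{thm21} with the entries of $C_n$ replaced by those of $\tilde{C}_n$, and to track the extra phase factors. Write $\tilde{c}_{ij}=c_{j-i\,(\text{mod }n)}\eta^{\,j-i\,(\text{mod }n)}$. Entries of $\tilde{C}_n^*$ are complex conjugates of transposed entries, so each $\tilde{c}^{\epsilon}_{ij}$ equals a real Gaussian $c_{\alpha}$ times a unimodular phase $\eta^{\pm\alpha}$. Here $\alpha$ is the relevant circular difference, $\alpha_r=i_{r+1}-i_r$ (mod $n$) for $\epsilon_r=1$ and $\alpha_r=i_r-i_{r+1}$ (mod $n$) for $\epsilon_r=*$, and the sign of the exponent is $+$ for $\epsilon_r=1$ and $-$ for $\epsilon_r=*$.

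For a monomial $n^{-p/2}\tilde{C}_n^{\epsilon_1}D_n^{(1)}\cdots\tilde{C}_n^{\epsilon_p}D_n^{(p)}$ the trace expansion is the same as in (\ref{pf11}), with an extra deterministic factor $\prod_r \eta^{\pm\alpha_r}$ of modulus one. The real Gaussians $c_j$ are unchanged, so Isserlis' formula applies verbatim, and odd $p$ gives zero.

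The counting argument of the proof of Theorem \ref{thm21} used only the Kronecker constraints (\ref{cjmom}) and the boundedness of the $d$-factors. Both remain valid: the phase factors have modulus one and do not alter which index configurations give nonzero covariances. Hence every pair partition with some $\epsilon_s=\epsilon_t$ for $(s,t)\in\pi$ has only $m-1$ free differences, and its contribution is $O(n^{m})/n^{m+1}\to0$. Only $\pi\in\Pi_Q(\mathcal{P}_2(2m))$ survive.

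For such $\pi$ and a pair $(s,t)$ with $\epsilon_s=1$, $\epsilon_t=*$, the constraint is that $\tilde{c}_{i_si_{s+1}}$ and $\tilde{c}^*_{i_ti_{t+1}}$ involve the same $c_\alpha$. The factor of $\tilde{c}_{i_si_{s+1}}$ is $\eta^{\alpha}$. The entry $\tilde{c}^*_{i_ti_{t+1}}$ is the conjugate of $\tilde{c}_{i_{t+1}i_t}$, which carries $\eta^{\alpha}$ with the same $\alpha$, so its factor is $\bar\eta^{\alpha}$. The two phases therefore multiply to $|\eta^\alpha|^2=1$ exactly, not merely asymptotically, because the mod-$n$ reduction is identical in both factors. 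Consequently each surviving summand coincides with the corresponding summand for $C_n$. The limit equals (\ref{cdjm}), which by Theorem \ref{thm21} is $\varphi(Q(N,d(\theta)))$.

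The main point to verify carefully is this exact cancellation. Specifically, one must check that the index convention (circular difference in $[0,n-1]$) attaches the same exponent $\alpha$ to both members of the pair, including the wraparound cases $a_r=\pm1$ in (\ref{pf110}). If it failed for some wraparound terms, the discrepancy would be a factor $\eta^{\pm n}=e^{\pm\iota\pi}=-1$. I would then check that such configurations are forced onto lower-dimensional sets and so contribute nothing in the limit. I expect this not to be needed, since the same random variable $c_\alpha$ appears with its own fixed phase in both entries.

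Finally, by traciality, monomials not starting with $\tilde{C}_n$ reduce to the form treated. The marginal of $d(\theta)$ is given by Lemma \ref{lemma1}, and that of $N$ by (\ref{jclimit}), which completes the identification.
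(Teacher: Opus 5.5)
Your proof is correct, and it closes the argument differently from the paper's sketch. The paper works in two steps. Step I computes the eigenvalues $\tilde\alpha_j+\iota\tilde\beta_j$ of $n^{-1/2}\tilde C_n$ and checks that their real and imaginary parts have variance $1/2$ and are uncorrelated. It then uses the circulant ESD theory of \cite{Bosesaha2018} plus uniform integrability to get $*$-convergence of $n^{-1/2}\tilde C_n$ alone to a complex Gaussian. Step II writes the trace expansion with the phases carried along and appeals to ``similar arguments'' as in Theorem \ref{thm21} for existence of the joint limit, which is then identified only through its marginals.

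You instead show that on every surviving pairing the two entries of a pair involve the same $c_\alpha$, with phases $\eta^{\alpha}$ and $\bar\eta^{\alpha}$. So the phases cancel exactly, summand by summand, while equal-type pairings stay negligible because the phases are unimodular. Hence every joint $*$-moment of $(n^{-1/2}\tilde C_n,D_n(\theta))$ has literally the same limit (\ref{cdjm}) as for $(n^{-1/2}C_n,D_n(\theta))$. Step I becomes unnecessary: the marginal of $N$ follows by taking all $D_n^{(r)}=I$, so your appeal to (\ref{jclimit}) is redundant. You also get the stronger conclusion that the \emph{joint} law is the one in Theorem \ref{thm21}. The same cancellation is what makes the phase disappear in the explicit computations of Example \ref{exmp23} and Remark \ref{rem43}.

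Your bookkeeping is the right one, so the fallback you sketch for wraparound terms is not needed. The $(i_r,i_{r+1})$ entry of $\tilde C_n^*$ carries $\bar\eta^{(i_r-i_{r+1})\bmod n}$, not $\bar\eta^{(i_{r+1}-i_r)\bmod n}$ as the exponent displayed in the paper's Step II suggests, and only with the former does the cancellation hold.

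What Step I buys in return is spectral information: convergence of the ESD of $n^{-1/2}\tilde C_n$. Through $S_n=D_n\tilde C_nD_n^*$ this underlies Theorem \ref{thm31}(ii), but the corollary itself does not need it.
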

    
\begin{theorem}\label{thm22}
    Let $C_n^{(j)}, \tilde{C}_n^{(j)}$, $j=1,\dots,k$ be independent random circulant matrices as in Theorem \ref{thm21} and Corollary \ref{corl1}, respectively, so that the involved random variables are iid Gaussian. Let $D_n(\theta)$ 
    be as in (\ref{eqn:defndn}).
    Then, $\{n^{-1/2}C_n^{(j)}, n^{-1/2}\tilde{C}_n^{(j)}, D_n (\theta)\}_{1\leq j\leq k}$ jointly converge to $\{N_j,\tilde{N}_j,d (\theta)\}_{1\leq j\leq k}$ in $*$-distribution, { where  both $\{N_j\}$ and $\{\tilde{N}_j\}$ are collections of independent standard complex Gaussian variables (for definition, see Section \ref{intro}) such that $(N_j,\tilde{N}_j)$'s are not independent, and the moments of $d(\theta)$ coincide with those of $e^{\iota\theta U}$, where 
    $U\sim Uniform (0, 1)$}. 
\end{theorem}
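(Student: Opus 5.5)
The plan is to repeat the moment computation from the proof of Theorem \ref{thm21}, now with several independent circulants. By traciality it suffices to treat monomials of the form $n^{-p/2}X_1D_n^{(1)}X_2D_n^{(2)}\cdots X_pD_n^{(p)}$, where each $X_r\in\{C_n^{(j)},C_n^{(j)*},\tilde{C}_n^{(j)},\tilde{C}_n^{(j)*}: 1\le j\le k\}$. As before, $D_n^{(r)}=D_n^{k_r}$. We expand $\varphi_n$ as a sum over indices $i_1,\dots,i_p$.

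All generating variables are jointly Gaussian, namely $c^{(j)}_l$ and $\tilde c^{(j)}_l$ for $1\le j\le k$, $l\ge0$. Read literally, the hypothesis says these are iid. However, the assertion that $(N_j,\tilde N_j)$ are \emph{not} independent only makes sense if $\tilde C_n^{(j)}$ is built from the same sequence as $C_n^{(j)}$. That is, $\tilde C_n^{(j)}=\mathrm{Circ}(c^{(j)}_0,c^{(j)}_1\eta,\dots)$, as in the Toeplitz decomposition of Section \ref{sec41}. I would handle this general case, since the independent-sequence case is a special subcase. So Isserlis' formula applies, and only pair partitions survive. Odd $p$ give $0$.

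For $p=2m$, fix $\pi\in\mathcal{P}_2(2m)$. The pair covariance $\mathbb{E}(x_{i_si_{s+1}}x_{i_ti_{t+1}})$ vanishes unless both factors come from the same index $j$. When they do, its form depends on the types involved, with $\alpha_r=i_{r+1}-i_r \pmod n$:
\begin{itemize}
\item for $C$--$C$ pairs it is exactly (\ref{cjmom});
\item for $\tilde C$--$\tilde C$ pairs the same delta constraints appear, multiplied by unimodular factors $\eta^{\pm\alpha_s}\eta^{\pm\alpha_t}$;
\item for mixed $C$--$\tilde C$ pairs (same $j$) the delta constraints again appear, now with a single factor $\eta^{\pm\alpha}$.
\end{itemize}
Since the matrices are real or have conjugation structure like (\ref{cjmom}), the constraint is $\alpha_s=\pm\alpha_t \pmod n$. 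The sign is $+$ when the two factors are both unstarred or both starred, and $-$ otherwise.

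The counting argument built on (\ref{p17}) carries over verbatim, because the extra phases have modulus one. Any pair with a "$+$" sign forces an extra linear relation and kills the contribution. So only partitions in $\Pi_Q$ survive: every block pairs a starred element with an unstarred one of the same index $j$ (types $C$ or $\tilde C$ may mix). For each such $\pi$, we parametrise by the $m+1$ free indices as in (\ref{pf110}). The $\eta$-phases, e.g. $\eta^{\alpha}=e^{\iota\pi\alpha/n}$ with $\alpha=(i_{r+1}-i_r)\bmod n$, are of the form $e^{\iota\pi\,(\text{linear form})/n}$ with a mod-$n$ reduction and an indicator of a polyhedral region. The Riemann-sum step then yields integrals of the type (\ref{cdjm}), with $\theta$-phases and additional $\pi$-phases under mod~$1$. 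Hence all limits exist, which defines $\varphi$ on the algebra generated by $\{N_j,\tilde N_j,d(\theta)\}$.

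To identify the marginals, drop the $D$'s. Monomials in $\{C_n^{(j)}\}$ alone converge to independent complex Gaussians by (\ref{jcc}); for $\{\tilde C_n^{(j)}\}$ alone we use (\ref{jclimit}); and monomials in $D_n$ alone converge by Lemma \ref{lemma1}. Finally, the non-independence of $N_j$ and $\tilde N_j$ would be shown by a low-order computation, namely $\varphi(N_j\tilde N_j^*)=\lim n^{-2}\sum \eta^{\cdot}\ne 0$, which differs from $\varphi(N_j)\varphi(\tilde N_j^*)=0$.

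The main obstacle is the mixed covariances. One must check that the phases $\eta^{\alpha}$ do not spoil the reduction to $\Pi_Q$: they can only reduce magnitudes, so the order-counting is safe. One must also check that after the mod-$n$ reduction they become Riemann-integrable functions, being piecewise continuous on finitely many polyhedra. That is what makes the convergence rigorous.
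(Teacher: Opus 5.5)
\noindent Your convergence argument is essentially the paper's own sketch: write the entries of $\tilde C_n^{(j)}$ as Gaussians times unimodular powers of $\eta$, apply Isserlis, keep only matchings within the same index $j$, reuse the counting behind (\ref{p17}), pass to integrals of type (\ref{cdjm}), and read off the marginals from (\ref{jcc}), (\ref{jclimit}) and Lemma \ref{lemma1}. That part is fine. The problem is the clause that $(N_j,\tilde N_j)$ are not independent. You obtain it only by changing the hypothesis to a common input sequence for $C_n^{(j)}$ and $\tilde C_n^{(j)}$. The theorem explicitly takes all these matrices independent with iid inputs. Your appeal to Section \ref{sec41} is also a misreading: there $\{c_j\}$ and $\{s_j\}$ are independent sequences, and the dependence between the limits of $C_n$ and $S_n=D_n\tilde C_nD_n^*$ comes from conjugation by $D_n(\pi)$, not from shared entries. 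In your common-sequence model the witness is correct, since $\varphi(N_j\tilde N_j^*)=\int_0^1e^{-\iota\pi x}\,\mathrm{d}x=2/(\iota\pi)$. Under the stated hypothesis, however, every mixed $C$--$\tilde C$ covariance vanishes and the same computation gives $0$. So for this clause you are proving a statement about a different model.

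\noindent Moreover, no other witness can rescue the clause under the stated hypothesis. All of $C_n^{(j)},\tilde C_n^{(j)}$ and their adjoints are circulants, so they commute and are simultaneously diagonalised by $F_n$. With $\omega=e^{2\pi\iota/n}$, their eigenvalues $n^{-1/2}\sum_l c^{(j)}_l\omega^{rl}$ and $n^{-1/2}\sum_l\tilde c^{(j)}_le^{\iota\pi(2r+1)l/n}$ are independent across the $2k$ matrices. They are also exactly standard complex Gaussian for all but $O(1)$ values of $r$, and those exceptional $r$ contribute $O(n^{-1})$. Hence every $D_n$-free mixed moment converges to the product of the corresponding complex Gaussian moments, i.e.\ $\{N_j,\tilde N_j\}_{1\le j\le k}$ is an independent family in the sense of Section \ref{intro}. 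The paper's sketch never argues the non-independence clause. In this model genuine dependence appears only once $d(\theta)$ enters (Remark \ref{rem21}), e.g.\ between $N$ and $d(\pi)\tilde N d(\pi)^*$ as in Theorem \ref{thm41}. You were right to find the clause suspicious. The correct response, though, is to prove the convergence and note that, read as a statement about $\{N_j,\tilde N_j\}$ alone, the clause fails under the given hypotheses, rather than altering the hypotheses until it holds.
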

\begin{remark}
    As in Remark \ref{rem21}, the relation between the non self-adjoint Gaussian variables and $d(\theta)$ is left unsaid. Clearly 
    every $(N_j, d(\theta))$ and $(\tilde{N}_j, d(\theta))$ are distributed as $(N, d(\theta))$ given in Theorem \ref{thm21}. It may be observed that if $D_n$ is a general deterministic matrix (which itself converges), it is then not easy to establish its joint convergence with one or more circulant matrices. 
\end{remark}
\begin{example}\label{exmp23}
    Consider the $*$-probability space $(\mathcal{M}_n,\varphi_n=n^{-1}\mathbb{E}\text{Tr})$. Let $\{c_j\}$ and $\{\tilde{c}_j\}$ be independent sequence of iid standard Gaussian variables, and $\eta=e^{\iota\theta/n}$. Let us consider a monomial $n^{-2}C_nD_n\tilde{C}_nD_nC_n^*D_n\tilde{C}_n^*D^*_n$ in $n^{-1/2}C_n=n^{-1/2}\text{Circ}(c_0,c_1,\dots,c_{n-1})$ and $n^{-1/2}\tilde{C}_n=n^{-1/2}\text{Circ}(\tilde{c}_0,\tilde{c}_1\eta,\dots,\tilde{c}_{n-1}\eta^{n-1})$. Then,
    \begin{align*}
        \lim_{n\rightarrow\infty}&\varphi_n (n^{-2}C_nD_n\tilde{C}_nD_nC_n^*D^*_n\tilde{C}_n^*D^*_n)\\
        &=\lim_{n\rightarrow\infty}\frac{1}{n^3}\sum_{i_1,i_2,i_3,i_4=0}^{n-1}\mathbb{E}[c_{i_1i_2}\tilde{c}_{i_2i_3}c_{i_3i_4}^*\tilde{c}_{i_4i_1}^*]\eta^{i_3-i_2\,(\text{mod $n$})}\eta^{i_1-i_4\,(\text{mod $n$})}d_{i_1}^*d_{i_2}d_{i_3}d_{i_4}^*\\
        &=\lim_{n\rightarrow\infty}\frac{1}{n^3}\sum_{i_1,i_2,i_3,i_4=0}^{n-1}\mathbb{E}[c_{i_1,i_2}c_{i_4i_3}]\mathbb{E}[\tilde{c}_{i_2i_3}\tilde{c}_{i_1,i_4}]\eta^{i_3-i_2+i_1-i_4\,(\text{mod $n$})}d_{i_1}^*d_{i_2}d_{i_3}d_{i_4}^*\\
        &=\lim_{n\rightarrow\infty}\frac{1}{n^3}\sum_{i_1,i_2,i_3=0}^{n-1}\eta^{0\,(\text{mod $n$})}d_{i_1}^*d_{i_2}d_{i_3}d_{i_1-i_2+i_3\,(\text{mod $n$})}^*\mathbb{I}\{0\leq i_1-i_2+i_3\leq n-1\}\\
        &=\lim_{n\rightarrow\infty}\frac{1}{n^3}\sum_{i_1,i_2,i_3=0}^{n-1}e^{\iota\theta(2i_2/n-2i_1/n)\,(\text{mod $1$})}\mathbb{I}\{0\leq i_1-i_2+i_3\leq n-1\}\\
        &={\iint_{[0,1]^3}e^{\iota\theta2(x_2-x_1)\,(\text{mod}\,1)}\mathbb{I}\{0\leq x_1-x_2+x_3\leq 1\}\,\mathrm{d}x_1\,\mathrm{d}x_2\,\mathrm{d}x_3},
    \end{align*}
where the third equality follows from $i_2-i_1\,(\text{mod $n$})=i_3-i_4\,(\text{mod $n$})$.   Similarly, for the monomial $n^{-2}C_nD_nC_n^*D_n\tilde{C}_nD_n\tilde{C}_n^*D^*_n$, we have
    \begin{align*}
        \lim_{n\rightarrow\infty}\varphi_n (n^{-2}&C_nD_nC_n^*D_n\tilde{C}_nD_n\tilde{C}_n^*D^*_n)\\
        &=\lim_{n\rightarrow\infty}\frac{1}{n^3}\sum_{i_1,i_2,i_3,i_4=0}^{n-1}\mathbb{E}[c_{i_1i_2}c_{i_2i_3}^*\tilde{c}_{i_3i_4}\tilde{c}_{i_4i_1}^*]\eta^{i_4-i_3\,(\text{mod $n$})}\eta^{i_1-i_4\,(\text{mod $n$})}d_{i_1}^*d_{i_2}d_{i_3}d_{i_4}\\
        &=\lim_{n\rightarrow\infty}\frac{1}{n^3}\sum_{i_1,i_2,i_3,i_4=0}^{n-1}\mathbb{E}[c_{i_1,i_2}c_{i_3i_2}]\mathbb{E}[\tilde{c}_{i_3i_4}\tilde{c}_{i_1,i_4}]\eta^{i_1-i_3\,(\text{mod $n$})}d_{i_1}^*d_{i_2}d_{i_3}d_{i_4}\\
        &=\lim_{n\rightarrow\infty}\frac{1}{n^3}\sum_{i_1,i_2,i_4=0}^{n-1}d_{i_1}^*d_{i_2}d_{i_1}d_{i_4}\\
        &={\iint_{[0,1]^2}e^{\iota\theta (x_1+x_2)}\,\mathrm{d}x_1\,\mathrm{d}x_2=-\frac{(e^{\iota\theta}-1)^2}{\theta^2}.}
    \end{align*}
\end{example}

\section{LSD of skew and left skew-circulant matrices} \label{sec3}
We now study the $*$-convergence of 
skew- and left skew-circulant matrices. We also derive the LSDs of these matrices, for which it is enough to make the following weaker assumption. 
\paragraph{\textbf{Assumption II}} Let $\{s_j\}_{j\ge0}$ be independent real rvs such that $\mathbb{E}s_j=0$, $\mathbb{E}s_j^2=1$ and $\sup_{j}\mathbb{E}|s_j|^{2+\delta}<\infty$ for some $\delta>0$.
\vskip5pt

\subsection{Skew-circulant matrix} \label{sec31}
The skew-circulant matrix $S_n=\text{Scirc}(s_0,s_1,\dots,s_{n-1})$ is defined as:
\begin{equation*}
    S_n\coloneqq\begin{bmatrix}
        s_0&s_1&s_2&\dots&s_{n-1}\\
        -s_{n-1}&s_0&s_1&\dots&s_{n-2}\\
        -s_{n-2}&-s_{n-1}&s_0&\dots&s_{n-3}\\
        \vdots&\vdots&\vdots&\ddots&\vdots\\
        -s_1&-s_2&-s_3&\dots&s_0
    \end{bmatrix}.
\end{equation*}
Let $D_n=\text{diag}(1,\eta,\eta^2,\dots,\eta^{n-1})$, $\eta=e^{\iota\pi/n}$ be an $n\times n$ diagonal matrix and $F_n=((F_{ij}))$ be the discrete Fourier transform matrix of order $n$ with $F_{ij}=e^{\iota2\pi ij/n}$, $0\leq i,j\leq n-1$. Define $\tilde{F}_n=D_nF^*_n$. Then, we have (see \cite{Davis1979})
\begin{equation}\label{scdig}
    S_n=\tilde{F}_n\tilde{\Lambda}_n\tilde{F}^*_n,
\end{equation}
where $\tilde{\Lambda}_n=\text{diag}(\lambda_0,\lambda_1,\dots,\lambda_{n-1})$ contains the eigenvalues of $S_n$. Moreover, these  eigenvalues are given by 
\begin{equation}\label{scegn}
    \lambda_j=\beta_{j}+\iota\gamma_{j},\ j=0,1,\dots,n-1,
\end{equation}
where 
\begin{equation*}
    \beta_{j}=\sum_{r=0}^{n-1}s_r\cos\big(\frac{\pi(2j+1)r}{n}\big)\ \ \text{and}\ \ \gamma_{j}=\sum_{r=0}^{n-1}s_r\sin\big(\frac{\pi(2j+1)r}{n}\big).
\end{equation*} 
In particular, we note that
\begin{equation}\label{digsc}
    S_n=D_n F^*_n\tilde{\Lambda}_nF_nD^*_n=D_n\tilde{C}_nD^*_n,
\end{equation}
where $\tilde{C}_n=\text{Circ}(s_0,s_1\eta,\dots,s_{n-1}\eta^{n-1})$ is a circulant matrix. 
\begin{remark}\label{rem31}
    Let $S_n^{(1)}$ and $S_n^{(2)}$ be skew-circulant matrices. Then they commute. To see this, from (\ref{digsc}), we have    
    \begin{equation*}
S_n^{(1)}S_n^{(2)}=D_n\tilde{C}_n^{(1)}D^*_nD_n\tilde{C}_n^{(2)}D^*_n=D_n\tilde{C}_n^{(1)}\tilde{C}_n^{(2)}D^*_n=D_n\tilde{C}_n^{(2)}D^*_nD_n\tilde{C}_n^{(1)}D^*_n=S_n^{(2)}S_n^{(1)},
    \end{equation*}
    where we have used the facts that $D^*_nD_n=I$, and circulant matrices $\tilde{C}_n^{(i)}$, $i=1,2$ commute. 
\end{remark}
The ESD of $S_n$  (on $\mathbb{R}^2$) is defined as
\begin{equation*}
    F_{S_n}(x,y)\coloneqq\frac{1}{n}\sum_{j=0}^{n-1}\mathbb{I}\{\beta_j\leq x,\,\gamma_j\leq y\},\ (x,y)\in\mathbb{R}^2,
\end{equation*}
where $\mathbb{I}$ is the indicator function.
The LSD (in probability) of a random circulant matrix under Assumption II is the complex Gaussian distribution. 
See Chapter 3 of \cite{Bosesaha2018} for a detailed proof.
The second part of Theorem \ref{thm31} states that $S_n$ has the same asymptotic behaviour.
Since its proof
is similar, 
we omit it.  The proof of the first part is also omitted, since Theorem \ref{thm32} is a more general result. 
\begin{theorem}\label{thm31}
    Let $S_n$ be a skew-circulant matrix. Then the following hold:\vskip5pt

    \noindent (i) If the entries $\{s_j\}$ satisfy Assumption I, then $n^{-1/2}S_n$ as an element of $*$-probability space $(\mathcal{M}_n,n^{-1}\mathbb{E}\text{Tr})$ converges to a 
    standard complex Gaussian variable.\vskip5pt

    \noindent (i) If $\{s_j\}$ satisfy Assumption II, then the ESD of $n^{-1/2}S_n$ converges weakly in probability
    to a bivariate Gaussian distribution with mean $0$, and covariance matrix $\Sigma$
    which is diagonal with entries $1/2$.
    \end{theorem}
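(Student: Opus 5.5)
Both parts rest on the factorization (\ref{digsc}), $S_n=D_n\tilde{C}_nD_n^*$ with $D_n=D_n(\pi)$ unitary and $\tilde{C}_n=\text{Circ}(s_0,s_1\eta,\dots,s_{n-1}\eta^{n-1})$.

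\emph{Part (i).} Any $*$-monomial in $n^{-1/2}S_n$ equals $D_n\,Q(n^{-1/2}\tilde{C}_n,n^{-1/2}\tilde{C}_n^*)\,D_n^*$, because $D_n^*D_n=I$ and $(D_n\tilde{C}_nD_n^*)^*=D_n\tilde{C}_n^*D_n^*$. By traciality,
\[
\varphi_n\bigl(Q(n^{-1/2}S_n,n^{-1/2}S_n^*)\bigr)=\varphi_n\bigl(Q(n^{-1/2}\tilde{C}_n,n^{-1/2}\tilde{C}_n^*)\bigr).
\]
The limit then follows from (\ref{jclimit}) with $p=1$: under Assumption I, $n^{-1/2}\tilde{C}_n$ converges in $*$-distribution to a standard complex Gaussian variable. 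Corollary \ref{corl1} also gives this, if we want to cite a statement proved in the paper. I would check that (\ref{jclimit}) really holds under Assumption I and not only for Gaussian entries. The point is that the moment computation only involves pair partitions, with $\mathbb{E}[\tilde c_{ij}\tilde c^*_{kl}]$ carrying phases $\eta^{\pm\alpha}$. In the surviving partitions (all $\epsilon_s\neq\epsilon_t$) these phases cancel exactly, since a conjugate pair uses the same $s_r$ with conjugate phase. Partitions with $\epsilon_s=\epsilon_t$ vanish for the same counting reason as in the proof of Theorem \ref{thm21}.

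\emph{Part (ii).} Conjugation by a unitary preserves the spectrum, so the eigenvalues of $S_n$ are the $\lambda_j=\beta_j+\iota\gamma_j$ in (\ref{scegn}). I would run the circulant argument of Chapter 3 of \cite{Bosesaha2018} with frequencies $\omega_j=\pi(2j+1)/n$ in place of $2\pi j/n$. The steps are as follows.

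\textbf{First,} truncate or work under Assumption II directly and show that for a fixed bounded continuous $f$ the mean ESD converges: $n^{-1}\sum_j\mathbb{E}f(n^{-1/2}(\beta_j,\gamma_j))\to\mathbb{E}f(G)$ with $G\sim N(0,\tfrac12 I_2)$. This needs a multivariate Lindeberg/Lyapunov CLT, uniform in $j$ away from a negligible set of indices. The required covariance sums are
\[
n^{-1}\sum_r\cos^2(\omega_jr)\to\tfrac12,\quad n^{-1}\sum_r\sin^2(\omega_jr)\to\tfrac12,\quad n^{-1}\sum_r\cos(\omega_jr)\sin(\omega_jr)\to0.
\]
These hold uniformly for $j$ with $2\omega_j$ bounded away from $0$ and $2\pi$ modulo $2\pi$. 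Such $\omega_j$ are never exactly degenerate here (unlike $j=0$ and $j=n/2$ for the circulant), and the $O(1)$ near-degenerate indices are negligible. The $2+\delta$ moment gives the Lyapunov condition because the coefficients are bounded by $1$.

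\textbf{Second,} show that the variance of $n^{-1}\sum_j f(n^{-1/2}\lambda_j)$ tends to $0$. This reduces to asymptotic independence of the pairs $(\lambda_j,\lambda_k)$ for $j\neq k$. The reason is that, for $j\neq k$, the vectors $(\cos\omega_jr,\sin\omega_jr,\cos\omega_kr,\sin\omega_kr)_r$ are asymptotically orthogonal with the same normalisations. So a bivariate CLT for the 4-vector gives
\[
\mathbb{E}f(\lambda_j)f(\lambda_k)-\mathbb{E}f(\lambda_j)\,\mathbb{E}f(\lambda_k)\to0
\]
uniformly off a set of $o(n^2)$ pairs. The exceptional pairs are those with $j+k+1\equiv0\pmod n$: there the eigenvalues are complex conjugates, $\lambda_k=\bar\lambda_j$ for real entries, giving only $O(n)$ pairs. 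Chebyshev then yields convergence in probability.

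The main obstacle is this uniform CLT: handling the near-degenerate frequencies and the conjugate-pair indices so that they contribute $o(1)$. I would deal with it exactly as for the circulant, via a standard normal approximation bound (for example Bhattacharya–Rao) applied to the 2- and 4-dimensional sums.
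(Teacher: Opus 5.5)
Your proposal is correct and follows the paper's route. Part (i) is exactly the paper's argument for Theorem \ref{thm32}: conjugate by $D_n$, use traciality, and invoke (\ref{jclimit}). Part (ii) is the circulant argument of Chapter 3 of \cite{Bosesaha2018} run with the shifted frequencies $\omega_j=\pi(2j+1)/n$, which the paper calls similar and omits.

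One small correction: you say the skew-circulant frequencies are never exactly degenerate, but for odd $n$ the index $j=(n-1)/2$ gives $\omega_j=\pi$ and $\gamma_j\equiv 0$. For every other $j$ the covariance identities hold exactly rather than asymptotically, because $e^{2\iota\omega_j n}=1$. So only that single index needs to be discarded, and there is no near-degenerate set to handle.
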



The following result gives the joint convergence of independent skew-circulant matrices.
\begin{theorem}\label{thm32}
    Let $S_n^{(j)}$, $j=1,\dots,k\in\mathbb{N}$ be independent skew-circulant matrices whose input sequences satisfy Assumption I. Then, $\{n^{-1/2}S_n^{(j)}\}_{1\leq j\leq k}$ as elements of $(\mathcal{M}_n,\varphi_n=n^{-1}\mathbb{E}\text{Tr})$ jointly converge  
    to independent standard complex Gaussian variables.
\end{theorem}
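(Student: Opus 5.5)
The plan is to reduce to circulant matrices via the identity (\ref{digsc}), $S_n^{(j)}=D_n\tilde{C}_n^{(j)}D_n^*$ with $D_n=D_n(\pi)$, where $\tilde{C}_n^{(j)}=\text{Circ}(s_0^{(j)},s_1^{(j)}\eta,\dots,s_{n-1}^{(j)}\eta^{n-1})$ and $\eta=e^{\iota\pi/n}$. Since $D_n$ is unitary, $(S_n^{(j)})^*=D_n(\tilde{C}_n^{(j)})^*D_n^*$. Hence for any monomial $Q$ in the variables $x_1,x_1^*,\dots,x_k,x_k^*$, all interior factors $D_n^*D_n=I$ cancel, and we get
\begin{equation*}
Q\big(n^{-1/2}S_n^{(1)},\dots,n^{-1/2}S_n^{(k)}\big)=D_n\,Q\big(n^{-1/2}\tilde{C}_n^{(1)},\dots,n^{-1/2}\tilde{C}_n^{(k)}\big)\,D_n^*.
\end{equation*}
By traciality of $\varphi_n$ and $D_n^*D_n=I$, this gives $\varphi_n(Q(n^{-1/2}S_n^{(j)}))=\varphi_n(Q(n^{-1/2}\tilde{C}_n^{(j)}))$ exactly, for every $n$. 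Note that no joint convergence with $D_n$ (Theorem \ref{thm22}) is needed here; the deterministic matrices disappear entirely.

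It therefore suffices to show that $\{n^{-1/2}\tilde{C}_n^{(j)}\}_{1\le j\le k}$ converge jointly to independent standard complex Gaussian variables. Under Assumption I this is exactly (\ref{jclimit}), which the paper states follows by adapting the proof of Lemma 5.2 of \cite{Bose2026}.

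If one wants to check (\ref{jclimit}) rather than quote it, I would proceed as in the proof of Theorem \ref{thm21} with $D_n^{(r)}=I$. Only pair partitions contribute under Assumption I, by the standard argument. Pairs must match the same index $j$ by independence.

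For a matched pair $(s,t)$, the covariance of entries $\tilde c_{i_si_{s+1}}$ and $\tilde c_{i_ti_{t+1}}$ (with conjugations) carries phase factors $\eta^{\alpha_s}$ and $\eta^{\alpha_t}$ or their conjugates, where $\alpha_r=i_{r+1}-i_r \pmod n$. When $\epsilon_s\ne\epsilon_t$, the constraint is $\alpha_s=\alpha_t$ for the entry $c_{i_si_{s+1}}\bar c_{i_ti_{t+1}}$ structure. There is one point to verify carefully: for a circulant, $(C^*)_{ab}=\bar c_{b-a}$, so the pairing forces $\alpha_s=-\alpha_t \pmod n$ and the phases become $\eta^{\alpha_s}\bar\eta^{\,n-\alpha_s}$-type products of modulus $1$. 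I would check that they combine to exactly $1$, or at least to a sign that is constant on the $O(n^{m+1})$ leading configurations. Pairs with $\epsilon_s=\epsilon_t$ lose a free index by the argument around (\ref{p17}), as in Theorem \ref{thm21}.

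Once this is done, the limit of $\varphi_n(Q)$ equals $\sum_{\pi}\prod_{(s,t)\in\pi}\mathbb{I}\{j_s=j_t,\epsilon_s\ne\epsilon_t\}$ over pair partitions $\pi$. Crossing $\pi$ are handled via the circulant commutativity: all indices are free up to the $m$ relations, giving $n^{m+1}$ terms. This matches Wick's formula for $\mathbb{E}[\prod N_{j_r}^{\epsilon_r}]$ with independent standard complex Gaussian $N_j$, which gives the independence claimed.

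The main obstacle is this phase bookkeeping: showing that the $\eta$-factors from wrap-around (when $i_{r+1}-i_r<0$) do not introduce $\pm1$ signs that fail to cancel. I expect the cancellation to hold because each such factor appears once with $\eta$ and once with $\bar\eta$ in a matched conjugate pair. If that fails, I would fall back on quoting (\ref{jclimit}) directly.
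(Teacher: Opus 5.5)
Your argument is correct and is essentially the paper's proof: conjugate by $D_n=D_n(\pi)$ via (\ref{digsc}), cancel the interior factors $D_n^*D_n=I$, and use traciality to get $\varphi_n(Q(n^{-1/2}S_n^{(j)}))=\varphi_n(Q(n^{-1/2}\tilde{C}_n^{(j)}))$ exactly, then quote (\ref{jclimit}); the paper also regroups the word using commutativity, which, as you note, is unnecessary. On your optional check of (\ref{jclimit}), the phase concern disappears: for $\epsilon_s\neq\epsilon_t$ the $\tilde{C}_n^*$ entry carries $\bar\eta^{(i_t-i_{t+1})\bmod n}=\bar\eta^{\alpha_s}$ on the support of the covariance, so the phase is exactly $\eta^{\alpha_s}\bar\eta^{\alpha_s}=1$ (not $\eta^{\alpha_s}\bar\eta^{\,n-\alpha_s}$, which would equal $-\eta^{2\alpha_s}$).
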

\begin{proof}
    Note that skew-circulant matrices are simultaneously diagonalizable using the unitary matrix $\tilde{F}_n$ as given in (\ref{scdig}). Moreover, from (\ref{digsc}), it follows that a monomial in $\{n^{-1/2}S_n^{(j)},n^{-1/2}S_n^{*^{(j)}}\}_{1\leq j\leq k}$  has the following form:
    \begin{equation*}
        Q=n^{-\sum_{j=1}^{k}(p_j+q_j)}D_n(\tilde{C}_n^{(1)})^{p_1}(\tilde{C}_n^{(1)^{*}})^{q_1}\dots (\tilde{C}_n^{(k)})^{p_k}(\tilde{C}_n^{(k)^{*}})^{q_k}D_n^*
    \end{equation*}for some $p_j,q_j\in\mathbb{N}\cup\{0\}$ for each $j=1,\dots,k$, where $\{\tilde{C}_n^{(j)}\}_{1\leq j\leq k}$ are independent complex circulant matrices and $D_n$ is the diagonal unitary matrix as defined in Corollary \ref{corl1}. Thus,
    \begin{equation*}
        \varphi_n(Q)=n^{-\sum_{j=1}^{k}(p_j+q_j)}\varphi_n((\tilde{C}_n^{(1)})^{p_1}(\tilde{C}_n^{(1)^{*}})^{q_1}\dots (\tilde{C}_n^{(k)})^{p_k}(\tilde{C}_n^{(k)^{*}})^{q_k}).
    \end{equation*}
    Now, the proof follows from (\ref{jclimit}).
\end{proof}

\subsection{Left skew-circulant matrix} \label{sec32}
An $n\times n$ left skew-circulant matrix $L_n=\text{LScirc}(l_0,l_1,\dots,l_{n-1})$ with real entries from $\{l_j\}_{j\ge0}$  is defined as:
\begin{equation*}
    L_n\coloneqq\begin{bmatrix}
        l_0&l_1&l_2&\dots&l_{n-2}&l_{n-1}\\
        l_1&l_2&l_3&\dots&l_{n-1}&-l_0\\
        l_2&l_3&l_4&\dots&-l_0&-l_1\\
        \vdots&\vdots&\vdots&\ddots&\vdots&\vdots\\
        l_{n-1}&-l_0&-l_1&\dots&-l_{n-3}&-l_{n-2}
    \end{bmatrix}.
\end{equation*}
Note that $L_n$ is a symmetric matrix. Let $J_n$ be the $n\times n$ exchange matrix,
\begin{equation}\label{jn}
    J_n=((\delta_{i,n-1-j}))_{0\leq i,j\leq n-1}\coloneqq\begin{bmatrix}
        0&0&0&\dots&0&1\\
        0&0&0&\dots&1&0\\
        \vdots&\vdots&\vdots&\ddots&\vdots&\vdots\\
        0&1&0&\dots&0&0\\
        1&0&0&\dots&0&0
    \end{bmatrix},
\end{equation}
and $S_n=\text{Scirc}(s_0,s_1,\dots,s_{n-1})$ be an $n\times n$ skew-circulant matrix. We note that $L_n=S_nJ_n$ with $l_j=s_{n-1-j}$ for each $j=0,1,\dots,n-1$. Then, from (\ref{digsc}), we get 
\begin{equation}\label{lscJ}
    L_n=\tilde{F}_n\tilde{\Lambda}_n\tilde{F}_n^*J_n.
\end{equation}
As $L_n$ is a symmetric matrix $L_n^*=J_n\tilde{F}_n\tilde{\Lambda}_n^*\tilde{F}_n^*=L_n$. Thus,
\begin{equation}\label{lscegs}
    L_n^2=\tilde{F}_n\tilde{\Lambda}_n\tilde{\Lambda}_n^*\tilde{F}_n^*.
\end{equation}
From (\ref{lscegs}), it follows that the eigenvalues of $L_n^2$ are $|\lambda_0|^2, |\lambda_1|^2,\dots,|\lambda_{n-1}|^2$, where $\lambda_j$, $j=0,\dots,n-1$ are the eigenvalues of a skew-circulant matrix $S_n=\text{Scirc}(l_{n-1},l_{n-2},\dots,l_0)$. 
Recall that the LSD of the reverse circulant matrix is the symmetrized Rayleigh distribution, see Theorem 8.6.2 of \cite{Bose2022}. We now show that $L_n$ has the same LSD.  \vskip5pt

{ The following result is a restatement of Theorem 7.5.1 of \cite{Bose2022}, which gives the universality of the $*$-limit and the LSD for symmetric matrices. Moreover, using similar arguments as given in the proof of Theorem 7.5.1 of \cite{Bose2022}, this can be extended for the $*$-convergence of a non-symmetric matrix. This allows us, without loss of generality, to adopt a convenient choice of entries for the matrices considered hereon. 
\begin{lemma}\label{lemma:univer}
Let $A_n$, $n\ge1$, be symmetric matrices whose entries satisfy Assumption I, and every entry of $A_n$ is repeated only finitely many times in a column and row. Then, the $*$-limiting moments and the LSD (if it exists) of $n^{-1/2}A_n$ do not depend on the distributions of its entries. Moreover, if $A_n$ is non-symmetric, then the $*$-limit of $n^{-1/2}A_n$ is independent of the distribution of its entries (satisfying Assumption I).
\end{lemma}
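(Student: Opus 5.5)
The plan is to follow the standard moment-method universality argument behind Theorem 7.5.1 of \cite{Bose2022}, which rests on a trace expansion over link functions.

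\textbf{Step 1: word expansion.} Write $A_n=((x_{L(i,j)}))$, where $L$ is the link function and $\{x_k\}$ is the input sequence. For any monomial $Q$ of degree $p$ in $n^{-1/2}A_n$ and $n^{-1/2}A_n^*$ (for symmetric $A_n$ only powers occur), expand
\[
\varphi_n(Q)=n^{-1-p/2}\sum_{i_1,\dots,i_p}\mathbb{E}\Big[\prod_{r=1}^p x_{L(i_r,i_{r+1})}^{\epsilon_r}\Big].
\]
Group the circuits $(i_1,\dots,i_p)$ according to the partition of $\{1,\dots,p\}$ they induce by equality of the link values $L(i_r,i_{r+1})$.

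\textbf{Step 2: discard non-pair partitions.} Because the entries are independent with mean zero, any block of size one contributes $0$.

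Consider a partition with all blocks of size at least two and at least one block of size at least three; it has fewer than $p/2$ blocks. The hypothesis that each entry is repeated only finitely many times in any row and column is exactly what bounds the number of circuits matching a partition with $b$ blocks by $C n^{b+1}$. The argument is that once one row index and the link value are fixed, there are only $O(1)$ choices for the next index.

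The uniform moment bounds in Assumption I make each expectation $O(1)$. Hence such partitions contribute $O(n^{b+1-1-p/2})\to 0$.

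\textbf{Step 3: pair partitions.} For a pair partition, each expectation factorizes into a product of $\mathbb{E}[x_k^{\epsilon}x_k^{\epsilon'}]=\mathbb{E}x_k^2=1$, since the entries are real with unit variance. This holds irrespective of the $\epsilon$'s, because for real entries conjugation only transposes the index pair.

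Circuits in which two distinct pairs share the same link value form a lower-order subset. Such a circuit is effectively a partition with a block of size four, so Step 2 already shows it is negligible.

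Therefore
\[
\varphi_n(Q)=n^{-1-p/2}\sum_{\pi\in\mathcal{P}_2(p)}\#\{\text{circuits matching }\pi\}+o(1),
\]
which involves only the link function and not the entry distributions. This proves universality of the $*$-limits, and the argument is identical for non-symmetric $A_n$, where $A_n^*$ simply uses the transposed link.

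\textbf{Step 4: the LSD.} For symmetric $A_n$, to pass from moments to the LSD, I will show the moment recipe identifies the limit of $\mathbb{E}$ of the ESD moments. Any two entry distributions then produce the same limiting moments. If an LSD exists, almost sure or in probability, it is determined by these moments, provided they satisfy Carleman's condition. That condition holds because the number of pair partitions is bounded by the Gaussian moment $(2m-1)!!$.

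Convergence of the ESD itself (via the fourth-moment bound $\mathbb{E}[(\mathrm{tr}\,A^k-\mathbb{E}\,\mathrm{tr}\,A^k)^4]=O(n^{-2})$ and Borel–Cantelli) is needed only for existence, and the lemma assumes existence.

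\textbf{Main obstacle.} The main obstacle is the counting bound in Step 2 under only the ``finitely many repetitions per row/column'' hypothesis. It requires an inductive choice of indices along the circuit: at each new block one index is free, and for every later edge of that block the next index is determined up to a bounded number of choices.
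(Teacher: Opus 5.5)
Your proposal is correct and follows essentially the same route as the paper. The paper gives no independent proof: it invokes the moment-method universality result (Theorem 7.5.1 of \cite{Bose2022}), which rests on bounded-repetition counting to discard non-pair matchings and on reducing pair-matched terms to link-function counts, and it remarks that the non-symmetric case goes through the same way, just as you do with the transposed link.

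Two small corrections. First, each pair partition contributes at most $\Delta^m$ in the limit, where $\Delta$ bounds the repetitions in a row or column, so the $2m$th moment is bounded by $\Delta^m(2m-1)!!$ rather than by $(2m-1)!!$; Carleman's condition still holds. Second, to identify an existing LSD with these moments, note that the expected ESD converges to the same limit and has uniformly bounded moments of each order.
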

}

\begin{theorem}\label{thm33}
    Let $L_n$ be an $n\times n$ random left skew-circulant matrix. Then 
    \vskip5pt

    \noindent (i)  If the entries satisfy Assumption I, then $n^{-1/2}L_n$, as an element of 
    $(\mathcal{M}_n,\varphi_n=n^{-1}\mathbb{E}\text{Tr})$, converges to a self-adjoint variable whose odd moments vanish and $(2m)$th moment equals $m!$ for each $m\ge1$. 
    \vskip5pt

    \noindent (ii)  If the entries are iid with mean $0$ and variance $1$, then the ESD of $n^{-1/2}L_n$ converges weakly a.s.~to the symmetrized Rayleigh distribution. 
 \end{theorem}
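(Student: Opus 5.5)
The plan is to reduce both parts to statements about the skew-circulant matrix $S_n$, using the identity $L_n^2=\tilde{F}_n\tilde{\Lambda}_n\tilde{\Lambda}_n^*\tilde{F}_n^*$ from (\ref{lscegs}).

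For (i), $L_n$ is symmetric, so only the moments $\varphi_n(n^{-k}L_n^k)$ are needed.

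\emph{Even moments.} By (\ref{lscegs}),
\[
\varphi_n(n^{-m}L_n^{2m})=\varphi_n\big(n^{-m}(S_nS_n^*)^m\big),
\]
where $S_n=\text{Scirc}(l_{n-1},\dots,l_0)$. Since $S_n$ is normal and $S_n=D_n\tilde C_nD_n^*$ by (\ref{digsc}), this equals $\varphi_n\big(n^{-m}\tilde C_n^m\tilde C_n^{*m}\big)$. By Theorem \ref{thm31}(i), or (\ref{jclimit}) with $p=1$, this converges to $\mathbb{E}|N|^{2m}=m!$ for a standard complex Gaussian $N$. The input entries are a permutation of the $l_j$, so they still satisfy Assumption I.

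\emph{Odd moments.} For $m\ge0$ write $L_n^{2m+1}=L_n(L_n^2)^m=S_nJ_n\tilde F_n(\tilde\Lambda_n\tilde\Lambda_n^*)^m\tilde F_n^*$. I would expand $\mathbb{E}\,\mathrm{Tr}$ directly in the entries. The $(i,j)$ entry of $L_n$ is $\pm l_{i+j \,(\mathrm{mod}\, n)}$, which is the Hankel-type (reverse-circulant-like) link function with a sign. The standard argument (as for the reverse circulant, Theorem 8.6.2 / Chapter 8 of \cite{Bose2022}) gives:
\begin{itemize}[leftmargin=*]
\item only pair partitions contribute, since the link repeats each variable at most once per row and column, so Lemma \ref{lemma:univer}-type counting applies;
\item an odd number of matrices admits no pair partition, so the odd moments are $O(n^{-1/2})\to 0$.
\end{itemize}
The signs are bounded by $1$ in absolute value, so they do not affect this count.

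This direct expansion could also recover the even moments. The nonvanishing pair partitions are the symmetric ones in the sense of reverse circulant, where each pair $(s,t)$ has $s,t$ of opposite parity, and each contributes $1$. There are $m!$ of them. The step that needs care here is checking that the sign factors $\pm$ multiply to $+1$ on every contributing partition. I would avoid this by relying on the spectral route for even moments, which is cleaner.

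For (ii), I would use universality. By Lemma \ref{lemma:univer} the limit does not depend on the entry distribution, so it suffices to prove a.s. convergence of the ESD. The approach is the moment method:
\begin{enumerate}[leftmargin=*]
\item Under truncation, with entries bounded after the standard truncation argument as in Theorem 8.6.2 of \cite{Bose2022}, part (i) gives convergence of the expected moments.
\item The fourth-moment bound $\mathbb{E}\big[n^{-1}\mathrm{Tr}(n^{-k/2}L_n^k)-\varphi_n(\cdot)\big]^4=O(n^{-2})$ holds by the same counting as for reverse circulant, since $L_n$ has the same link up to sign.
\item Borel--Cantelli then upgrades this to a.s. convergence of moments.
\end{enumerate}
The moments $0$ and $m!$ satisfy Carleman's condition and identify the symmetrized Rayleigh law $|x|e^{-x^2}$.

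An alternative route uses (\ref{lscegs}): the ESD of $n^{-1}L_n^2$ equals that of $\{|\lambda_j|^2/n\}$. If the ESD of $n^{-1/2}S_n$ converges a.s. to the complex Gaussian, this limit is exponential$(1)$. Symmetry of the spectrum of $L_n$ would then give the symmetrized Rayleigh law. However, Theorem \ref{thm31}(ii) is only in probability, and symmetry of the spectrum of $L_n$ is not immediate. So I would use the moment method.

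The main obstacle is the truncation and the a.s. fourth-moment variance bound. I would handle both by transcribing the reverse-circulant argument, noting that the sign changes in $L_n$ do not alter the index-matching combinatorics.
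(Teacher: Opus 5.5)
Your proposal is correct and follows the paper's route: the identity $L_n^2=S_nS_n^*$ from (\ref{lscegs}) reduces the even moments to $\mathbb{E}|N|^{2m}=m!$, and part (ii) follows from standard truncation plus the moment method. The paper differs only in that it first reduces to Gaussian entries via Lemma \ref{lemma:univer}, so the odd moments vanish by symmetry and $\mathbb{E}(\beta_j^2+\gamma_j^2)^m=m!\,n^m$ is computed explicitly; you instead work under Assumption I, citing Theorem \ref{thm31}(i) and the Property-B pair-partition count.
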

\begin{proof} 
  (i) 
   In view of Lemma \ref{lemma:univer}, we may assume that the entries of $L_n$ are iid standard Gaussian.  Moreover, $\lim_{n\rightarrow\infty}\varphi_n((n^{-1/2}L_n)^{2m-1})=0$ for all $m\ge1$.  From (\ref{lscegs}), we have
    \begin{align*}
    \mathbb{E}\text{Tr}(n^{-1/2}L_n)^{2m}=n^{-m}\mathbb{E}\text{Tr}(L_n^2)^m&=n^{-m}\mathbb{E}\text{Tr}(\tilde{F}_n\tilde{\Lambda}_n\tilde{\Lambda}_n^*\tilde{F}_n^*)^m\\
    &=n^{-m}\mathbb{E}\text{Tr}(\tilde{\Lambda}_n\tilde{\Lambda}_n^*)^m\\
    &=n^{-m}(\mathbb{E}\lambda_0^m+\sum_{j=1}^{n-1}\mathbb{E}(\beta_j^2+\gamma_j^2)^m),
    \end{align*}
    where we have used (\ref{scegn}) in the last step. Moreover, $\lambda_0$ is Gaussian with mean $0$ and variance $n$. Also, for each $j=1,\dots,n-1$, $\beta_j$ and $\gamma_j$ are independent Gaussian with mean $0$ and variance $n/2$. So, $\beta_j^2+\gamma_j^2\sim\exp(1/n)$ and $\mathbb{E}(\beta_j^2+\gamma_j^2)^m=m!n^m$. Thus,
    \begin{equation*}
        \lim_{n\rightarrow\infty}n^{-1}\mathbb{E}\text{Tr}(n^{-1/2}L_n)^{2m}=\lim_{n\rightarrow\infty}\frac{\mathbb{E}\lambda_0^m+(n-1)n^mm!}{n^{m+1}}=m!,
    \end{equation*} 
    which coincides with the $(2m)$th moment of the symmetrized Rayleigh distribution. This proves the first part. The second part now follows by standard truncation arguments. See for example Lemma 1.4.1 of \cite{Bose2018}.
    We omit the details.
\end{proof}
\subsubsection{Half-commuting and half-independent variables} We recall the concept of half-commuting and half-independent variables. See \cite{Bose2022} for details. 
Let $(\mathcal{A},\varphi)$ be an NCP.
Variables $\{a_j\}_{j\in\mathcal{J}}\subset\mathcal{A}$ half commute if, for any $i,j,k\in \mathcal{J}$, $a_ia_ja_k=a_ka_ja_i$. If $\{a_j\}_{j\in\mathcal{J}}$ half-commute then $\{a_j^2\}_{j\in\mathcal{J}}$ commute.
Let $a_{j_1}\dots a_{j_k}$, $j_1,\dots,j_k\in\mathcal{J}$ be a monomial in $\{a_j\}_{j\in\mathcal{J}}$. It is called symmetric in $\{a_j\}_{j\in\mathcal{J}}$ if the number of appearances of $a_j$ at odd and even positions is equal for each $j$. Otherwise, it is called non-symmetric.
Half-commuting variables $\{a_j\}_{j\in\mathcal{J}}$ are said to be half-independent if $\{a_j^2\}_{j\in\mathcal{J}}$ are independent and, if for all non-symmetric monomial $Q$ in $\{a_j\}_{j\in\mathcal{J}}$, $\varphi(Q)=0$.
\begin{remark}
    Let $\{L_n^{(j)}\}_{j\ge1}$ be left skew-circulant matrices. Then (\ref{lscJ}) implies, 
$L_n^{(j)}=\tilde{F}_n\tilde{\Lambda}^{(j)}_n\tilde{F}_n^*J_n=J_n\tilde{F}_n\tilde{\Lambda}_n^{*^{(j)}}\tilde{F}_n^*$ for each $j\ge1$. Then, for any $i,j,k$,
\begin{align*}
L_n^{(i)}L_n^{(j)}L_n^{(k)}&=\tilde{F}_n\tilde{\Lambda}^{(i)}_n\tilde{F}_n^*J_n(\tilde{F}_n\tilde{\Lambda}^{(j)}_n\tilde{F}_n^*J_n)^*\tilde{F}_n\tilde{\Lambda}^{(k)}_n\tilde{F}_n^*J_n\\
&=\tilde{F}_n\tilde{\Lambda}^{(i)}_n\tilde{F}_n^*J_nJ_n\tilde{F}_n\tilde{\Lambda}^{*^{(j)}}_n\tilde{F}_n^*\tilde{F}_n\tilde{\Lambda}^{(k)}_n\tilde{F}_n^*J_n\\
    &=\tilde{F}_n\tilde{\Lambda}^{(i)}_n\tilde{\Lambda}^{*^{(j)}}_n\tilde{\Lambda}^{(k)}_n\tilde{F}_n^*J_n\\
    &=\tilde{F}_n\tilde{\Lambda}^{(k)}_n\tilde{\Lambda}^{*^{(j)}}_n\tilde{\Lambda}^{(i)}_n\tilde{F}_n^*J_n\\
    &=\tilde{F}_n\tilde{\Lambda}^{(k)}_n\tilde{F}_n^*J_n(\tilde{F}_n\tilde{\Lambda}^{(j)}_n\tilde{F}_n^*J_n)^*\tilde{F}_n\tilde{\Lambda}^{(i)}_n\tilde{F}_n^*J_n\\
    &=L_n^{(k)}L_n^{(j)}L_n^{(i)},
\end{align*}
where we have used that $J_n^2=I$. Thus, left skew-circulant matrices  half-commute.
\end{remark}
    In \cite{Bose2011}, it is shown that independent reverse circulant matrices half-commute, and converge to half-independent self-adjoint variables with symmetrized Rayleigh distributions with unit variance. Theorem \ref{thm:LSC} can be proved along similar lines. We omit the proof. 
\begin{theorem}\label{thm:LSC}
    Let $\{L_n^{(j)}\}_{1\leq j\leq k}$ be independent left skew-circulant random matrices whose entries satisfy Assumption I. Then, $\{n^{-1/2}L_n^{(j)}\}_{1\leq j\leq k}$ converges in $*$-distribution to half-independent variables with symmetrized Rayleigh distribution with unit variance.  
\end{theorem}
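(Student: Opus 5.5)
By Lemma \ref{lemma:univer}, the joint $*$-limit does not depend on the distribution of the entries. So I would take the input sequences of $L_n^{(1)},\dots,L_n^{(k)}$ to be independent iid standard Gaussian. Each $L_n^{(j)}$ is symmetric, so only monomials $Q=n^{-p/2}L_n^{(j_1)}\cdots L_n^{(j_p)}$ need to be considered.

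\textbf{Step 1 (reduction to diagonal products).} By (\ref{lscJ}), and the representation in the preceding Remark,
\[
L_n^{(j)}=\tilde{F}_n\tilde{\Lambda}^{(j)}_n\tilde{F}_n^*J_n=J_n\tilde{F}_n\tilde{\Lambda}^{*(j)}_n\tilde{F}_n^*,
\]
and $J_n^2=I$. Hence for even $p=2m$ the product telescopes:
\[
L_n^{(j_1)}\cdots L_n^{(j_{2m})}=\tilde{F}_n\,\tilde{\Lambda}^{(j_1)}_n\tilde{\Lambda}^{*(j_2)}_n\tilde{\Lambda}^{(j_3)}_n\cdots\tilde{\Lambda}^{*(j_{2m})}_n\,\tilde{F}_n^*.
\]
Taking the trace gives
\[
\varphi_n(Q)=n^{-1-m}\sum_{l=0}^{n-1}\mathbb{E}\Big[\prod_{r\ \mathrm{odd}}\lambda^{(j_r)}_l\prod_{r\ \mathrm{even}}\overline{\lambda^{(j_r)}_l}\Big].
\]

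\textbf{Step 2 (moment computation).} For $l\neq0$ (and $l\ne (n-1)/2$ when $n$ is odd), $\lambda^{(j)}_l=\beta^{(j)}_l+\iota\gamma^{(j)}_l$. Here $\beta^{(j)}_l$ and $\gamma^{(j)}_l$ are independent $N(0,n/2)$, so $n^{-1/2}\lambda_l^{(j)}$ is exactly standard complex Gaussian, and it is independent across $j$. Therefore the expectation factorizes over $j$. If $j$ appears $a_j$ times at odd positions and $b_j$ times at even positions, the $j$-th factor equals $\mathbb{E}[N^{a_j}\bar N^{b_j}]=\delta_{a_j b_j}\,a_j!$. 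The excluded indices number $O(1)$ and have bounded normalized moments, so they are negligible. This gives
\[
\lim\varphi_n(Q)=\prod_j \delta_{a_jb_j}\,a_j!.
\]
In particular, the limit is $0$ unless $Q$ is symmetric in the sense of half-independence.

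\textbf{Step 3 (odd $p$).} For odd $p$ I would write $L^{(j_1)}\cdots L^{(j_p)}=\tilde F_n(\cdots)\tilde F_n^*J_n$. The trace is then $\mathrm{Tr}(\tilde F_n^*J_n\tilde F_n\,\mathrm{diag}(\cdot))$. Alternatively, and more simply, every odd-order moment is a sum over an odd number of centered Gaussian entries, so by Isserlis' formula its expectation is zero. Either way the limit is $0$. Note that a monomial of odd length is automatically non-symmetric, so this is consistent with half-independence.

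\textbf{Step 4 (identifying the limit).} The limiting moments are those of half-commuting self-adjoint variables $\{a_j\}$. The limit inherits half-commutation from the Remark, since all moment functionals respect it. The $a_j^2$ are independent: the moments of $\prod_j a_j^{2m_j}$ (in any half-commuting arrangement) equal $\prod_j m_j!$, which is the product of the marginal moments. Non-symmetric monomials have zero limit, and each $a_j$ is symmetrized Rayleigh with unit variance by Theorem \ref{thm33}. Existence of an NCP realizing these limits is standard, namely the limit functional on the free $*$-algebra; in fact an explicit model can be built from independent complex Gaussians in $2\times2$ matrices, as in the reverse circulant case.

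The main obstacle I expect is Step 1's bookkeeping and ensuring the diagonal identity holds exactly. In particular, the telescoping needs $L$ written alternately in its two forms: use $\tilde F\Lambda\tilde F^*J$ at odd positions and $J\tilde F\Lambda^*\tilde F^*$ at even positions. I also need to check that the factorization over distinct $j$ survives uniformly in $l$.
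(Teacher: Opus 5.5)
Your proof is correct, but it takes a different route from the paper. The paper omits the proof of Theorem~\ref{thm:LSC} and says it goes ``along similar lines'' to \cite{Bose2011}. That is the word-counting method for independent reverse circulants:
\begin{itemize}
\item expand $\varphi_n$ over circuits;
\item show that only pair-matched words survive, with both members of each pair coming from the same matrix;
\item show that each surviving word contributes $1$ if it is symmetric and $0$ otherwise.
\end{itemize}
You instead extend the paper's own proof of Theorem~\ref{thm33}(i) to several matrices. All $L_n^{(j)}$ share $\tilde{F}_n$ and $J_n$, so alternating the two representations collapses every even word to $\tilde{F}_n\tilde{\Lambda}^{(j_1)}_n\tilde{\Lambda}^{*(j_2)}_n\cdots\tilde{F}_n^*$. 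The $n^{-1/2}\lambda^{(j)}_l$ are exactly standard complex Gaussian and independent across $j$, which gives the closed form $\prod_j\delta_{a_jb_j}a_j!$ at once.

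What your route buys:
\begin{itemize}
\item It needs no combinatorics.
\item The vanishing on non-symmetric words, the independence of the $a_j^2$ and the Rayleigh marginals are all read off one formula.
\item You never handle the sign carried by the entries $-l_{i+k-n}$ with $i+k\ge n$. In a word-counting argument this sign turns each word's contribution into a signed count, which must be checked separately from the reverse circulant case.
\end{itemize}
What the paper's route buys: it works directly under Assumption~I, with universality built in. You need Gaussian entries, and hence a joint version of Lemma~\ref{lemma:univer} for several independent matrices; the paper states that lemma for a single matrix. The extension is standard: only pair-matched words with both members of each pair from the same matrix contribute, so the limit depends only on variances. You should state this explicitly.

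Two small points.
\begin{itemize}
\item For the skew-circulant, the only exceptional eigenvalue is $\lambda_{(n-1)/2}$ when $n$ is odd, which is real with variance $n$. $\lambda_0$ is already standard complex Gaussian after scaling, since $\sum_r\cos(2\pi r/n)=0$. Excluding it is harmless but unnecessary.
\item In Step~4, the generators of the free $*$-algebra do not half-commute. Either pass to the quotient by the ideal generated by $x_ix_jx_k-x_kx_jx_i$, or use your $2\times2$ model directly. The limit functional vanishes on that ideal because the matrices half-commute exactly for every $n$. The model $a_j=\bigl(\begin{smallmatrix}0&N_j\\ \overline{N_j}&0\end{smallmatrix}\bigr)$ with state $\frac12\mathbb{E}\,\mathrm{tr}$ reproduces $\prod_j\delta_{a_jb_j}a_j!$ exactly and makes half-commutation and half-independence immediate.
\end{itemize}
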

\section{
$*$-convergence and LSD of Toeplitz  and Hankel matrices}\label{sec4}
We consider the symmetric and non-symmetric Toeplitz matrices $n^{-1/2}T_{s,n}$ and $n^{-1/2}T_n$,  as well as the symmetric Hankel matrices $n^{-1/2}H_n$. 
Under Assumption I, the $*$-convergence of several independent $n^{-1/2}T_{s,n}$ and $n^{-1/2}H_n$ has been established in \cite{Bose2011} and \cite{basuboseganghazra2012}. The proofs can be easily adapted to also establish the $*$-convergence of independent copies of $n^{-1/2}T_n$. These proofs proceed by directly showing the convergence of $\mathbb{E}\text{Tr}$. We provide alternative, shorter proofs that use the connections between the above matrices and $C_n$, $S_n$, $L_n$, and the results already proved for these matrices. This also helps to show that the limit variables involve the complex Gaussian and symmetric Rayleigh variables. 

When the entries are iid with mean $0$ and variance $1$, the LSD (in the a.s.~sense) of $n^{-1/2}T_{s,n}$ and $n^{-1/2}H_n$ were established by \cite{Bryc2006} and \cite{Hammond2005}. The most crucial step in their proof is to invoke the above $*$-convergence results under Assumption II to show the convergence of the expected ESD. Then the Borel-Cantelli lemma is used to show a.s.~convergence of the ESD, and, finally, standard truncation techniques are used to relax Assumption II. Our proof of the a.s.~convergence of the LSD for these matrices can now be fashioned out of the new proof of the $*$-convergence mentioned in the previous paragraph. The rest of the argument is standard as explained above. 

\subsection{$*$-convergence}
\label{sec41}
The $n\times n$ Toeplitz matrix with input sequence $\{\tau_j\}_{j\in\mathbb{Z}}$ is generated as $T_n\coloneqq ((\tau_{j-i}))_{0\leq i,j\leq n-1}$.
If $\tau_{-j}=\tau_{j}$ for all $j\in\mathbb{N}$, then the matrix is symmetric. 
The following result provides the $*$-convergence of a random non-symmetric Toeplitz matrix. 

\begin{theorem}\label{thm41}
    Let $T_n$ be a non-symmetric Toeplitz matrix generated by iid random variables that satisfy Assumption I.  
    Then, $n^{-1/2}T_n$ as an element of 
    $(\mathcal{M}_n,\varphi_n=n^{-1}\mathbb{E}\text{Tr}(\cdot))$ converges 
    in $*$-distribution 
    to $(N+\tilde{N})/\sqrt{2}$, where 
    $N$ and $\tilde{N}$ are 
    standard complex Gaussian variables (for definition, see Section \ref{intro}). 
    The joint moments of $N$ and $\tilde{N}$ are sums of finite integrals of type (\ref{cdjm}), and are dominated by some Gaussian moments. 
\end{theorem}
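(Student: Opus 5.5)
By Lemma \ref{lemma:univer} (non-symmetric part), we may assume the input $\{\tau_j\}_{j\in\mathbb{Z}}$ of $T_n$ is iid standard Gaussian. The plan is to write $T_n$ as a normalized sum of a circulant and a skew-circulant matrix, and then to use the joint convergence of circulants with $D_n(\pi)$ in Theorem \ref{thm22}.

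\textbf{Step 1: decomposition.} For $0\le j\le n-1$, put
\[
c_j=(\tau_j+\tau_{j-n})/\sqrt{2}, \qquad s_j=(\tau_j-\tau_{j-n})/\sqrt{2}.
\]
These are iid standard Gaussian, and the families $\{c_j\}$ and $\{s_j\}$ are independent of each other by Gaussian orthogonality. Checking entries (above and below the diagonal) gives
\[
T_n=\tfrac{1}{\sqrt{2}}\,(C_n+S_n), \qquad C_n=\text{Circ}(c_0,\dots,c_{n-1}),\quad S_n=\text{Scirc}(s_0,\dots,s_{n-1}).
\]
By (\ref{digsc}), $S_n=D_n\tilde{C}_nD_n^*$, where $D_n=D_n(\pi)$ and $\tilde{C}_n=\text{Circ}(s_0,s_1\eta,\dots,s_{n-1}\eta^{n-1})$ with $\eta=e^{\iota\pi/n}$. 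Hence
\[
n^{-1/2}T_n=\tfrac{1}{\sqrt{2}}\bigl(n^{-1/2}C_n+D_n\,n^{-1/2}\tilde{C}_n\,D_n^*\bigr).
\]

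\textbf{Step 2: convergence.} Every $*$-monomial in $n^{-1/2}T_n$ expands into a finite sum of monomials in $n^{-1/2}C_n$, $n^{-1/2}\tilde{C}_n$, $D_n$ and their adjoints, using $(D_n\tilde{C}_nD_n^*)^*=D_n\tilde{C}_n^*D_n^*$. Theorem \ref{thm22} with $k=1$ and $\theta=\pi$ gives joint convergence of $\{n^{-1/2}C_n,n^{-1/2}\tilde{C}_n,D_n\}$ to $\{N,\tilde{N}',d(\pi)\}$. So $n^{-1/2}T_n$ converges in $*$-distribution to $(N+\tilde{N})/\sqrt{2}$, where $\tilde{N}:=d(\pi)\tilde{N}'d(\pi)^*$.

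\textbf{Step 3: identifying the summands.} $N$ is standard complex Gaussian by Theorem \ref{thm22}. For $\tilde{N}$, monomials in $\tilde{N},\tilde{N}^*$ reduce, by $d d^*=1$ (valid since $D_nD_n^*=I$) and traciality, to monomials in $\tilde{N}',\tilde{N}'^*$. Alternatively, $n^{-1/2}S_n$ converges to a standard complex Gaussian by Theorem \ref{thm31}(i) (equivalently, by (\ref{jclimit})). Normality of $\tilde{N}$ follows because skew-circulants are normal.

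\textbf{Step 4: joint moments and the bound.} By Theorem \ref{thm21}, Corollary \ref{corl1} and Theorem \ref{thm22}, each joint moment of $N,\tilde{N}$ is a finite sum of integrals of type (\ref{cdjm}). The domination follows as in Remark \ref{gaussianmom:1}: the number of contributing pair partitions is at most $\#\mathcal{P}_2(2m)$ times the number of expansion terms, and each integral has modulus at most $1$. Thus the moments are bounded by Gaussian moments of a suitable variance.

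\textbf{Main obstacle.} The delicate point is Step 2. Theorem \ref{thm22} is stated for circulants built from independent Gaussian sequences, and one must confirm that $\{c_j\}$ and $\{s_j\}$ are in fact independent. They are, because $(\tau_j,\tau_{j-n})\mapsto(c_j,s_j)$ is an orthogonal map applied to disjoint pairs of iid Gaussians. One must also confirm that the proof of Theorem \ref{thm22} covers mixed words containing $D_n$ between $C_n$ and $\tilde{C}_n$. Here I would re-run the Isserlis and pair-partition argument of Theorem \ref{thm21}: only cross-pairings of $c$ with $c$ and of $s$ with $s$ survive, and the extra phases $\eta^{\cdot}$ are absorbed into the $d^{(r)}$ factors exactly as in Example \ref{exmp23}.
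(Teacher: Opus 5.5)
Your proposal is correct and follows essentially the same route as the paper. Both reduce to Gaussian entries, write $T_n=(C_n+S_n)/\sqrt{2}$ with independent circulant and skew-circulant parts, and use $S_n=D_n(\pi)\tilde{C}_nD_n^*(\pi)$ to turn every $*$-monomial in $n^{-1/2}T_n$ into monomials covered by Theorem \ref{thm22}. The Gaussian domination then comes from Remark \ref{gaussianmom:1}. The only differences are cosmetic: you build $(c_j,s_j)$ from the given $\tau$'s, using the auxiliary $\tau_{-n}$ for $j=0$, whereas the paper builds $\tau$ from independent $c,s$; and you identify the second summand explicitly as $d(\pi)\tilde{N}'d(\pi)^*$.
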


\begin{proof} 
If we carefully follow the proof of Theorem 7.5.1 of \cite{Bose2022} for general symmetric patterned matrices, while calculating limits of $\mathbb{E}\text{Tr}$, only pair partitions potentially contribute, and hence, without loss, we may assume that the entries are Gaussian.

Let $\{c_j\}_{j\ge0}$ and $\{s_j\}_{j\ge0}$ be independent sequences of iid mean zero and variance $1$ Gaussian random variables. Let $C_n=\text{Circ}(c_0,c_1,\dots,c_{n-1})$ and $S_n=\text{Scirc}(s_0,s_1,\dots,s_{n-1})$ be $n\times n$ random circulant and skew-circulant matrices. Then, 
\begin{equation}\label{tpl}
    T_n=\frac{C_n+S_n}{\sqrt{2}}=\dfrac{1}{\sqrt{2}}\begin{bmatrix}
        c_0+s_0&c_1+s_1&c_2+s_2&\dots&c_{n-1}+s_{n-1}\\
        c_{n-1}-s_{n-1}&c_0+s_0&c_1+s_1&\dots&c_{n-2}+s_{n-2}\\
        c_{n-2}-s_{n-2}&c_{n-1}-s_{n-1}&c_0+s_0&\dots&c_{n-3}+s_{n-3}\\
        \vdots&\vdots&\vdots&\ddots&\vdots\\
        c_1-s_1&c_2-s_2&c_3-s_3&\dots&c_0+s_0
    \end{bmatrix}
\end{equation}
is a non-symmetric Toeplitz matrix generated by  
independent standard Gaussian random variables $\tau_0=(c_0+s_0)/\sqrt{2}$, $\tau_j=(c_j+s_j)/\sqrt{2}$ and $\tau_{-j}=(c_{n-j}-s_{n-j})/\sqrt{2}$ for $j=1\dots,n-1$. 

First, it is easy to see that for any monomial $Q$ in $\{n^{-1/2}T_n, n^{-1/2}T_n^*\}$ with unequal numbers of $T_n$ and $T_n^*$, $\lim_{n\rightarrow\infty}\varphi_n(Q)=0$. 
From Theorems \ref{thm21} and \ref{thm31}, 
both $n^{-1/2}C_n$ and $n^{-1/2}S_n$ converge in $*$-distribution to standard complex Gaussian variables. It remains to show that they converge jointly. To see this, from (\ref{digsc}), we have 
$n^{-1/2}S_n=D_n(n^{-1/2}\tilde{C}_n)D_n^*$, where $\tilde{C}_n$ and $D_n$ are as defined in Corollary \ref{corl1}. Thus, any typical monomial in 
$\{n^{-1/2}C_n, n^{-1/2}C_n^*, n^{-1/2}S_n, n^{-1/2}S_n^*\}$ is given as 
    \begin{equation*}
        n^{-\sum_{k=1}^{m}(p_j+p_j'+q_j+q_j')/2}C_n^{p_1}(C_n^*)^{q_1}D_n\tilde{C}_n^{p_1'}(\tilde{C}_n^*)^{q_1'}D_n^*\dots C_n^{p_m}(C_n^*)^{q_m}D_n\tilde{C}_n^{p_m'}(\tilde{C}_n^*)^{q_m'}D_n^*,
    \end{equation*}
    where $p_j,p_j',q_j,q_j'\in\mathbb{N}\cup\{0\}$ for each $j=1,\dots,m\in\mathbb{N}$. Hence, in view of Theorem \ref{thm22}, $\{n^{-1/2}C_n, n^{-1/2}S_n\}$ jointly converge in $*$-distribution to say, $\{N, \tilde{N}\}$. In particular, both $N$ and $\tilde{N}$ are standard Gaussian variables. { Invoking (\ref{tpl}), we conclude that the $*$-limit of $n^{-1/2}T_n$ is $N+\tilde{N}$. Also, any polynomial in $\{T_n, T_n^*\}$ is a polynomial in $\{C_n, C_n^*, \tilde{C}_n, \tilde{C}_n^*, D_n(\pi)\}$. Therefore, using the linearity of the state $n^{-1}\mathbb{E}\mathrm{Tr}$, from Theorem \ref{thm22}, the $*$-limits of $n^{-1/2}T_n$ is a sum of integrals of the form (\ref{cdjm}) because the $*$-limit of any monomial in $\{C_n, C_n^*, \tilde{C}_n, \tilde{C}_n^*, D_n(\pi)\}$ has this form, except that it will involve different coefficients in the linear combinations appearing in the formula. (for example, see the proof of Theorem \ref{thm22}, Eq. \ref{CtildeC:jtcovn}, and Example \ref{exmp23}). Moreover, the Gaussian moment bound follows from Remark \ref{gaussianmom:1}. This completes the proof.}
    \end{proof}
\begin{remark}\label{rem41}
    The dependence structure of $N$ and $\tilde{N}$ in Theorem \ref{thm41} is not known. 
\end{remark}
In view of (\ref{jcc}) and Theorem \ref{thm32}, we have the following result. We omit its proof.
\begin{theorem}\label{thm4.2}
    Let $\{n^{-1/2}T_n^{(j)}\}_{1\leq j\leq k}$ be iid non-symmetric Toeplitz matrices whose entries satisfy Assumption I. They 
    jointly converge in $*$-distribution to $\{(N_j+\tilde{N}_j)/\sqrt{2}\}_{1\leq j\leq k}$, where both $\{N_j\}$ and $\{\tilde{N}_j\}$ are collections of independent 
    standard complex Gaussian variables in an appropriate NCP.
\end{theorem}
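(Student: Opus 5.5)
We will reduce Theorem \ref{thm4.2} to the single-matrix argument of Theorem \ref{thm41}, applied simultaneously to $k$ independent copies. First, by Lemma \ref{lemma:univer} (the non-symmetric version) and the remark at the start of the proof of Theorem \ref{thm41} that only pair partitions contribute, we may assume all input sequences are iid standard Gaussian. For each $j$, we realize $T_n^{(j)}=(C_n^{(j)}+S_n^{(j)})/\sqrt{2}$ as in (\ref{tpl}). Here $\{c^{(j)}_i\}$ and $\{s^{(j)}_i\}$, $1\le j\le k$, are $2k$ mutually independent iid standard Gaussian sequences. Then $\tau^{(j)}_i=(c^{(j)}_i+s^{(j)}_i)/\sqrt2$ and $\tau^{(j)}_{-i}=(c^{(j)}_{n-i}-s^{(j)}_{n-i})/\sqrt2$ are iid standard Gaussian, independent across $j$. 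So the $T_n^{(j)}$ have exactly the law required, and it suffices to prove joint $*$-convergence of $\{n^{-1/2}C_n^{(j)},n^{-1/2}S_n^{(j)}\}_{1\le j\le k}$.

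Next, by (\ref{digsc}), $S_n^{(j)}=D_n\tilde C_n^{(j)}D_n^*$ with the common $D_n=D_n(\pi)$. Here $\tilde C_n^{(j)}=\text{Circ}(s^{(j)}_0,s^{(j)}_1\eta,\dots)$ are independent of each other and of the $C_n^{(j)}$. Consequently, any monomial in $\{n^{-1/2}C_n^{(j)},n^{-1/2}C_n^{(j)*},n^{-1/2}S_n^{(j)},n^{-1/2}S_n^{(j)*}\}_{j}$ is, after cancelling $D_n^*D_n=I$ between consecutive $S$-factors, a monomial in $\{n^{-1/2}C_n^{(j)},n^{-1/2}\tilde C_n^{(j)},D_n\}$ and their adjoints. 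Theorem \ref{thm22} with $\theta=\pi$ then gives convergence of $\varphi_n$ of every such monomial. Call the limits $\{N_j,\tilde N_j\}$: the $*$-limit of $n^{-1/2}S_n^{(j)}$ is represented by $d(\pi)\tilde N_j d(\pi)^*$ inside the limiting NCP of Theorem \ref{thm22}, and we rename it $\tilde N_j$. Expanding products of $(C+S)/\sqrt2$ and using linearity of $\varphi_n$ yields convergence of every $*$-moment of $\{n^{-1/2}T_n^{(j)}\}$ to the corresponding moment of $\{(N_j+\tilde N_j)/\sqrt2\}$.

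It remains to identify the marginal structure. By (\ref{jcc}), $\{n^{-1/2}C_n^{(j)}\}_j$ converge jointly to independent standard complex Gaussians, so $\{N_j\}$ is such a collection. By Theorem \ref{thm32}, $\{n^{-1/2}S_n^{(j)}\}_j$ converge jointly to independent standard complex Gaussians, so the same holds for $\{\tilde N_j\}$. Since these are sub-collections of the single joint limit above, both statements hold simultaneously in the one NCP.

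The main obstacle is making sure the limiting state is well defined as a single NCP containing all $N_j,\tilde N_j$. We handle this as in Remark \ref{rem21}: we define $\varphi$ on the free $*$-algebra in $2k$ indeterminates by the limiting values. This is positive and tracial, being a limit of the states $\varphi_n$. We then quotient, if needed, by the null ideal. No explicit dependence structure between $N_j$ and $\tilde N_j$ is claimed, consistent with Remark \ref{rem41}.
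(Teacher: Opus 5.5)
Your proposal is correct and follows essentially the same route the paper intends. The paper omits the proof and cites (\ref{jcc}) and Theorem \ref{thm32} on top of the single-matrix argument of Theorem \ref{thm41}. That argument is the decomposition $T_n^{(j)}=(C_n^{(j)}+S_n^{(j)})/\sqrt{2}$, the identity $S_n^{(j)}=D_n\tilde C_n^{(j)}D_n^*$, and Theorem \ref{thm22} for the joint convergence, which is exactly your plan. Your additional point, realizing the limit as a single tracial NCP by quotienting by the null ideal so that the limits genuinely commute and are normal, is a detail the paper leaves implicit.
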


We now consider the symmetric Toeplitz matrix and the Hankel matrix. {The following result will be needed. For its proof, see Appendix.
\begin{lemma}\label{lemma:digeq}
    Let $A_n=((a_{i,j}))$ be an $n\times n$ symmetric patterned matrix whose entries satisfy Assumption I. Suppose $n^{-1/2}A_n$ as an element of $(\mathcal{M}_n, n^{-1}\mathbb{E}\mathrm{tr})$ converges in $*$-distribution, and its LSD exists. For a constant $c\neq 0$, $\tilde{A}_n=((\tilde{a}_{i,j}))$ be a matrix such that $\tilde{a}_{i,j}=a_{i,j}$ for $1\leq i\neq j\leq n$ and $\tilde{a}_{i,i}=ca_{i,i}$ for each $1\leq i\leq n$. Then,\vskip5pt

    \noindent (i) the $*$-limit of $n^{-1/2}\tilde{A}_n$ coincides to that of $n^{-1/2}A_n$.\vskip5pt

    \noindent (ii) The LSD of $n^{-1/2}\tilde{A}_n$ is the same as the LSD of $n^{-1/2}A_n$.
\end{lemma}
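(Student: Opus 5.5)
Write $\tilde{A}_n=A_n+(c-1)\Delta_n$, where $\Delta_n=\mathrm{diag}(a_{1,1},\dots,a_{n,n})$. Both parts then reduce to showing that the diagonal perturbation $n^{-1/2}(c-1)\Delta_n$ is asymptotically negligible. For the ESD this negligibility is measured in a Hoffman--Wielandt (Frobenius) sense. For the $*$-moments it is measured through the moment expansion.

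\emph{Part (ii).} For symmetric $A_n,\tilde{A}_n$, the Hoffman--Wielandt inequality gives the bound on the squared L\'evy distance between the two ESDs:
\begin{equation*}
L^3\big(\mu(n^{-1/2}A_n),\mu(n^{-1/2}\tilde{A}_n)\big)\leq \frac{1}{n}\mathrm{Tr}\big(n^{-1/2}(A_n-\tilde{A}_n)\big)^2=\frac{(c-1)^2}{n^2}\sum_{i=1}^n a_{i,i}^2 .
\end{equation*}
Since the diagonal entries of a patterned matrix (Toeplitz, Hankel, etc.) are drawn from a set of variables with uniformly bounded moments, the right side has expectation $O(1/n)$. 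Its fourth moment is also small enough that Chebyshev and Borel--Cantelli make it tend to $0$ a.s. (it is even bounded by $(c-1)^2 n^{-1}\max_i a_{i,i}^2$). Hence the two ESDs have the same limit, almost surely and in probability.

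\emph{Part (i).} Since $\tilde{A}_n$ is symmetric, it suffices to compare $\varphi_n\big((n^{-1/2}\tilde{A}_n)^k\big)$ with $\varphi_n\big((n^{-1/2}A_n)^k\big)$. Expand
\begin{equation*}
\tilde{A}_n^k=\sum_{\text{words}}X_1X_2\cdots X_k,\qquad X_j\in\{A_n,(c-1)\Delta_n\}.
\end{equation*}
The steps are as follows.

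\begin{enumerate}
\item Write each trace as the usual sum over circuits $(i_1,\dots,i_k)$. A factor $\Delta_n$ forces $i_{j+1}=i_j$, which removes one free index.
\item Follow the standard argument of Theorem 7.5.1 of \cite{Bose2022}, using the bounded-repetition property and Assumption I. After normalisation by $n^{-1-k/2}$, only circuits whose edges are pair-matched contribute.
\item A pair-matched word with $r\geq1$ diagonal factors has at most $k/2+1-r$ free indices, since each diagonal loop costs one index. The matching of a diagonal entry with an off-diagonal entry in the same class imposes a further constraint.
\item Hence each such contribution is $O(n^{-r})\to0$. Only the word $A_n^k$ survives, which gives (i).
\end{enumerate}

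The main obstacle is making the index count in step 3 rigorous for a general pattern. One has to check that a diagonal self-loop $i_{j+1}=i_j$ cannot be compensated by extra freedom elsewhere in the pair-matched circuit. I would handle this with the graph-theoretic bound from \cite{Bose2022}: the number of circuits with a given matching is at most $n^{\#\text{vertices after identification}}$, and each loop edge reduces the vertex count by one relative to the $k/2+1$ maximum. For non-Gaussian entries, matchings of order higher than two are handled by the usual bound $\#\{\text{distinct classes}\}<k/2$, which already gives negligibility.
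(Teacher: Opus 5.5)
\textbf{Part (ii).} This part is fine and is essentially the paper's argument. A distance between the two ESDs is controlled by $(c-1)^2n^{-2}\sum_i a_{i,i}^2$: you use the L\'evy distance via Hoffman--Wielandt, the paper uses $W_2$. Borel--Cantelli then finishes.

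\textbf{Part (i), step 3.} This is where the gap is. The claim that a pair-matched word with $r\ge1$ diagonal factors has at most $k/2+1-r$ free indices, and so contributes $O(n^{-r})$, is false. Your proposed justification, that each loop edge lowers the vertex count by one, is exactly what breaks.
\begin{itemize}
\item For any pattern, $\varphi_n\big((n^{-1/2}\Delta_n)^2\big)=n^{-2}\sum_i\mathbb{E}a_{i,i}^2= n^{-1}$. The word $\Delta_n^2$ is pair-matched with one free index, not $k/2+1-r=0$.
\item A less degenerate case: for the symmetric Toeplitz matrix, $\Delta_n=x_0I_n$. In $A_n\Delta_nA_n\Delta_n$, every circuit $i\to j\to j\to i\to i$ with $i\neq j$ is pair-matched: the two $A_n$-edges carry $x_{|i-j|}$ and both loops carry $x_0$. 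So there are about $n^2$ such circuits, not $n$. The contribution has exact order $n^{-1}$, not $n^{-2}$.
\end{itemize}
Two loops matched to each other cost only one index between them. Patterned entries are matched by link value, not by vertex pair, so a Wigner-style vertex-identification count does not transfer.

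\textbf{How to repair it.} You only need the loss of one index, and that is easy to get.
\begin{itemize}
\item By traciality, rotate the word so that it starts with $\Delta_n$.
\item Run the bounded-repetition sequential count. An $A_n$-edge opening a new class gives at most $n$ choices for the next vertex. An $A_n$-edge repeating a class gives at most $B$ choices. A $\Delta_n$-edge forces the next vertex.
\item The class of the first edge is opened by a $\Delta_n$-edge, so at most $k/2-1$ classes are opened by $A_n$-edges.
\item Hence there are $O(n^{k/2})$ circuits, and every mixed word is $O(n^{-1})$.
\end{itemize}

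\textbf{Comparison with the paper.} The paper avoids the counting entirely. It shows $\varphi_n\big((n^{-1/2}(\tilde A_n-A_n))^m\big)=O(n^{-m/2})$. H\"older's inequality for $n^{-1}\mathbb{E}\mathrm{Tr}$, which the paper leaves implicit, combined with bounded moments of $n^{-1/2}A_n$, then makes every word with $r\ge1$ diagonal factors $O(n^{-r/2})$. That route needs only Assumption I and the assumed convergence of $n^{-1/2}A_n$. Your count additionally relies on the bounded-repetition property, which the lemma does not explicitly assume.
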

}

\begin{theorem}\label{thm43}
(i)     Let $T_{s,n}=((x_{|i-j|}))_{0\leq i,j\leq n-1}$ be a symmetric Toeplitz matrix whose entries satisfy Assumption I. Then, $n^{-1/2}T_{s,n}$ converges in $*$-distribution to $(Z+\tilde{Z})/\sqrt{2}$, where $Z$ and $\tilde{Z}$ are self-adjoint standard Gaussian variables (for definition, see Section \ref{intro}). Moreover, the joint moments of  $Z$ and $\tilde{Z}$ are sums of integrals of type (\ref{cdjm}). 
\vskip5pt

\noindent (ii) 
Let $H_n$ be an $n\times n$ Hankel matrix whose entries 
     satisfy Assumption I. Then, $n^{-1/2}H_n$ as an element of $(\mathcal{M}_n,\varphi_n=n^{-1}\mathbb{E}\text{Tr})$ converges in $*$-distribution to a self-adjoint variable $(Y_1+Y_2)/\sqrt{2}$ in some NCP 
     $(\mathcal{A},\varphi)$. Here, $Y_1$ and $Y_2$ are symmetrized standard Rayleigh variables (for definition, see Section \ref{intro}). 
     Their joint moments are sums of finite integrals of the type (\ref{cdjm}) and dominated by some Gaussian moments.
    \end{theorem}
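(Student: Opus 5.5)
The plan is to reduce both parts to the joint convergence already established for $\{C_n, \tilde C_n, D_n(\pi)\}$ (Theorem \ref{thm22}), through the identity (\ref{tpl}) and its Hankel analogue. First, by Lemma \ref{lemma:univer} (and the remark in the proof of Theorem \ref{thm41} that only pair partitions contribute), we may assume all input variables are iid standard Gaussian.

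\textbf{Part (i).} Take the non-symmetric Toeplitz matrix $T_n=(C_n+S_n)/\sqrt2$ from (\ref{tpl}), whose entries are $\tau_j$. Then $(T_n+T_n^*)/\sqrt2$ is a symmetric Toeplitz matrix. Its off-diagonal entries are $(\tau_j+\tau_{-j})/\sqrt2$ for $j\ge1$; these are iid standard Gaussian, since $\tau_j,\tau_{-j}$ are independent standard Gaussians. Its diagonal entry is $\sqrt2\,\tau_0$. Hence it equals a matrix $\tilde A_n$ obtained from a standard symmetric Toeplitz matrix $A_n$ by multiplying the diagonal by $c=\sqrt2$. By Lemma \ref{lemma:digeq}(i), $n^{-1/2}T_{s,n}$ and $n^{-1/2}(T_n+T_n^*)/\sqrt2$ have the same $*$-limit, provided the latter limit exists.

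By Theorem \ref{thm41} (really the joint convergence of $\{n^{-1/2}C_n,n^{-1/2}S_n\}$ to $\{N,\tilde N\}$ proved there), every monomial in $n^{-1/2}(T_n+T_n^*)/\sqrt2$ converges. The limit is
\[
\frac{1}{\sqrt2}\Big(\frac{N+\tilde N}{\sqrt2}+\frac{N^*+\tilde N^*}{\sqrt2}\Big)=\frac{Z+\tilde Z}{\sqrt2},\qquad Z=\frac{N+N^*}{\sqrt2},\ \tilde Z=\frac{\tilde N+\tilde N^*}{\sqrt2}.
\]
By the observation in Section \ref{intro}, $Z$ and $\tilde Z$ are self-adjoint standard Gaussian variables. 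Expanding any monomial in $Z,\tilde Z$ gives a polynomial in $C_n,C_n^*,\tilde C_n,\tilde C_n^*,D_n(\pi)$. So, by linearity and (\ref{cdjm}), the joint moments are finite sums of integrals of type (\ref{cdjm}).

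\textbf{Part (ii).} Here the plan is to use $H_n=T_nJ_n$, with $J_n$ as in (\ref{jn}), up to a harmless relabeling of the input sequence. Splitting $T_n=(C_n+S_n)/\sqrt2$ gives $H_n=(R_n+L_n)/\sqrt2$, where $R_n=C_nJ_n$ is a reverse circulant and $L_n=S_nJ_n$ is the left skew-circulant of Section \ref{sec32}. One checks directly that the anti-diagonal $i+j=k$ carries $(r_k+l_k)/\sqrt2$ for $k<n$ and $(r_{k-n}-l_{k-n})/\sqrt2$ for $k\ge n$. These are independent standard Gaussians, so this is indeed a Gaussian Hankel matrix.

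Any monomial in $n^{-1/2}R_n$, $n^{-1/2}L_n$ is a word in $C_n$ or $S_n$ with a factor $J_n$ after each letter. I will move the $J_n$'s through using $J_n^2=I$ together with $J_nC_nJ_n=C_n^{*}$ and $J_nS_nJ_n=S_n^{*}$ (for real inputs), possibly up to a cyclic shift.
\begin{itemize}[leftmargin=*]
\item An even-length word becomes a word in $C_n,C_n^*,S_n,S_n^*$, hence a polynomial in $C_n,\tilde C_n,D_n(\pi)$ and their adjoints. Its limit therefore exists by Theorem \ref{thm22} and is a sum of integrals of type (\ref{cdjm}).
\item An odd-length word leaves one $J_n$. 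I expect to show its normalized trace vanishes in the limit by a parity count: it is an odd-degree Gaussian expression, or has too few free indices after pairing.
\end{itemize}
Letting $Y_1,Y_2$ be the limits of $n^{-1/2}R_n$ and $n^{-1/2}L_n$, both are symmetrized Rayleigh: $Y_1$ by Theorem 8.6.2 of \cite{Bose2022} / \cite{Bose2011}, and $Y_2$ by Theorem \ref{thm33}. The Gaussian domination then follows from Remark \ref{gaussianmom:1}.

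\textbf{Main obstacle.} I expect the hardest step to be the bookkeeping in (ii). The identities $J_nC_nJ_n=C_n^*$ and $J_nS_nJ_n=S_n^*$ hold only modulo an index shift, and $J_n$ does not interact nicely with $D_n(\pi)$: conjugating $D_n$ by $J_n$ gives $\eta^{n-1}D_n^*$, so it stays diagonal but the constant must be tracked. The vanishing of odd words must also be checked. I would first verify these identities at the matrix-entry level and treat the leftover shifts as finite-rank perturbations, which do not affect limits of $n^{-1}\mathbb{E}\mathrm{Tr}$ under the moment bounds.
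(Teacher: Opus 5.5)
Your proposal is correct and follows essentially the paper's route. For (i) you write $T_{s,n}=(T_n+T_n^*)/\sqrt2$, invoke Theorem \ref{thm41}, and use Lemma \ref{lemma:digeq} for the diagonal. For (ii) you write $H_n=(R_n+L_n)/\sqrt2$ with $R_n=C_nJ_n$ and $L_n=S_nJ_n$, reduce even words to words in $C_n,C_n^*,S_n,S_n^*$ (hence in $C_n,\tilde{C}_n,D_n(\pi)$), discard odd words, and identify the marginals via \cite{Bose2011} and Theorem \ref{thm33}.

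The index-shift and $D_n$-conjugation issues you anticipate do not arise. The identities $J_nC_nJ_n=C_n^*$ and $J_nS_nJ_n=S_n^*$ hold exactly; equivalently, by symmetry $R_n=J_nC_n^*$ and $L_n=J_nS_n^*$. Odd words vanish identically for every $n$, since they are odd-degree polynomials in centered Gaussians.
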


\begin{proof} 
{In view of Lemma \ref{lemma:univer}, for both parts, 
we consider the matrices with standard Gaussian entries only. }
\vskip5pt

\noindent (i)  
 Let $T_n=((\tau_{j-i}))_{0\leq j\leq n-1}$ be a non-symmetric Toeplitz matrix whose entries satisfy Assumption I. Note that $T_{s,n}=(T_n+T_n^*)/\sqrt{2}$ is a symmetric Toeplitz matrix with $x_{|i-j|}=(\tau_{j-i}+\tau_{i-j})/\sqrt{2}$ for $0\leq i,j\leq n-1$. From Theorem \ref{thm41}, it follows that $n^{-1/2}T_n$ converges in $*$-distribution to $(N+\tilde{N})/\sqrt{2}$, where $N$ and $\tilde{N}$ are standard complex Gaussian variables. Thus, $n^{-1/2}T_{s,n}$ converges in $*$-distribution to the self-adjoint variable $(N+\tilde{N}+N^*+\tilde{N}^*)/\sqrt{4}$ in an appropriate $*$-probability space. Since 
    $Z:=(N+N^*)/\sqrt{2}$ and $\tilde{Z}:=(\tilde{N}+\tilde{N}^*)/\sqrt{2}$ 
    are standard Gaussian variables, $n^{-1/2}T_{s,n}$ converges in $*$-distribution to $(Z+\tilde{Z})/\sqrt{2}$. 
     
    {Note that the diagonal entries of $T_{s,n}$ are $\sqrt{2}\tau_0$ whose variance is $2$ not $1$. If we replace the diagonal entries $\sqrt{2}\tau_0$ with $\tau_0$, keeping the other entries fixed, then also the resulting matrix is a symmetric Toeplitz matrix whose entries satisfy Assumption I. Now, in view of Lemma \ref{lemma:digeq} (i), the $*$-convergence follows. Moreover, using similar arguments as in the proof of Theorem \ref{thm41}, the $*$-limits can be written as a finite sum of integrals like (\ref{cdjm}), which are dominated by some appropriate Gaussian moments. This completes the proof of Part (i).}
\vskip5pt

\noindent (ii) 
 Let 
    $T_n=((\tau_{j-i}))_{0\leq i,j\leq n-1}=(C_n+S_n)/\sqrt{2}$ be the non-symmetric Toeplitz matrix as in (\ref{tpl}) and $J_n$ be as in (\ref{jn}). 
        Then, $H_n=((h_{i+j}))=T_nJ_n/\sqrt{2}=(C_n+S_n)J_n/\sqrt{2}$ is our choice of the Hankel matrix with $h_j=\tau_{(n-1)-j}/\sqrt{2}$ for all $0\leq j\leq 2n-2$. 
      
    Note that $C_nJ_n=R_n$ and $S_nJ_n=L_n$ are respectively independent reverse circulant and left skew-circulant random matrices whose entries are iid Gaussian with mean $0$ and variance $1$. From Theorem 10.2.1 of \cite{Bose2022}, it follows that any monomial in $R_n$ and $L_n$ with an odd number of $R_n$ or $L_n$ has zero $*$- limit. As reverse and left skew-circulant matrices are symmetric and $J_n^2=I_n$, in view of (\ref{digsc}), any monomial with an even number of $R_n$ and $L_n$ is a monomial in 
    $C_n$, $D_n$ and $\tilde{C}_n$, which are 
    as defined in Theorem \ref{thm22}. Thus, $n^{-1/2}R_n$ and $n^{-1/2}L_n$ jointly converge in $*$-distribution. 
    Moreover, from the results of \cite{Bose2011}, 
    $n^{-1/2}R_n$ converges in $*$-distribution to a self-adjoint standard symmetrized Rayleigh variable. Thus, the $*$-limit follows by invoking 
    Theorem \ref{thm33}. {Remaining part follows by similar arguments as in the proof of Part (i). }
\end{proof}

\begin{remark}
 It can be checked that $Z$ and $\tilde{Z}$ in Theorem \ref{thm43} (i) do not satisfy any known standard dependence structure such as independence, free independence or half independence.
Similar comment holds for $Y_1$ and $Y_2$ in Theorem \ref{thm43} (ii).
\end{remark}
In view of Theorems \ref{thm22}, \ref{thm4.2} and \ref{thm43}, the following joint $*$-convergence result follows: 
\begin{theorem}\label{tjoint:con}
    (i) Let $\{n^{-1/2}T_{s,n}^{(j)}\}_{1\leq j\leq k}$ be independent symmetric Toeplitz matrices whose entries satisfy Assumption I. Then they converge in $*$-distribution to $\{(Z_j+\tilde{Z}_j)/\sqrt{2}\}_{1\leq j\leq k}$, where both $\{Z_j\}$ and $\{\tilde{Z}_j\}$ are collections of independent self-adjoint 
    standard Gaussian variables in a suitable NCP.
    \vskip5pt

\noindent (ii)   Independent random Hankel matrices $\{n^{-1/2}H_n^{(j)}\}_{1\leq j\leq k}
 $ whose entries satisfy Assumption I, converge in $*$-distribution to 
 $\{(Y_1^{(j)}+Y_2^{(j)})/\sqrt{2}\}_{1\leq j\leq k}$, where both $\{Y_1^{(j)}\}_{1\leq j\leq k}$ and $\{Y_2^{(j)}\}_{1\leq j\leq k}$ are collections of self-adjoint half-independent standard symmetrized Rayleigh variables. 
\end{theorem}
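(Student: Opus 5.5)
The plan is to reduce both parts to the single-matrix constructions in Theorems \ref{thm41} and \ref{thm43}, applied to each independent copy, and then to read off the marginal structure of the limit variables from the joint convergence results already available: Theorem \ref{thm22}, Theorem \ref{thm32}, (\ref{jcc}) and Theorem \ref{thm:LSC}. By Lemma \ref{lemma:univer} we may take all entries to be standard Gaussian.

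For each $j$ we realise $T_n^{(j)}=(C_n^{(j)}+S_n^{(j)})/\sqrt{2}$ as in (\ref{tpl}), using independent circulants $C_n^{(j)}$ and skew-circulants $S_n^{(j)}=D_n\tilde{C}_n^{(j)}D_n^*$. We then set $T_{s,n}^{(j)}=(T_n^{(j)}+T_n^{(j)*})/\sqrt{2}$ and $H_n^{(j)}=T_n^{(j)}J_n/\sqrt{2}$. The doubled diagonal variance of $T_{s,n}^{(j)}$ is handled exactly as in Theorem \ref{thm43}. For a joint monomial we apply Lemma \ref{lemma:digeq} entrywise: the diagonal modification touches only $O(n)$ entries per matrix, and its proof applies verbatim to mixed monomials.

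\noindent\textbf{Part (i).} Any monomial in $\{n^{-1/2}T_{s,n}^{(j)}\}_j$ expands, by linearity of $\varphi_n$, into monomials in $\{n^{-1/2}C_n^{(j)},n^{-1/2}\tilde{C}_n^{(j)},D_n(\pi)\}_{j\le k}$ and their adjoints. Theorem \ref{thm22} gives joint convergence of these to $\{N_j,\tilde N_j,d(\pi)\}$. Hence $\{n^{-1/2}T_{s,n}^{(j)}\}$ converges jointly to $\{(Z_j+\tilde Z_j)/\sqrt2\}$ with $Z_j=(N_j+N_j^*)/\sqrt2$ and $\tilde Z_j=(\tilde N_j+\tilde N_j^*)/\sqrt2$.

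It remains to see that $\{Z_j\}$ are independent, and similarly $\{\tilde Z_j\}$. The joint moments of $\{Z_j\}$ alone are limits of moments of $\{C_n^{(j)}\}$, and by (\ref{jcc}) these are independent complex Gaussians. The joint moments of $\{\tilde Z_j\}$ are limits of moments of $\{S_n^{(j)}\}$. By Theorem \ref{thm32}, or directly because the $D_n$ factors cancel inside such monomials, these are also independent complex Gaussians. Since self-adjoint parts of independent commuting variables are independent, this gives the claim.

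\noindent\textbf{Part (ii).} We write $H_n^{(j)}=(R_n^{(j)}+L_n^{(j)})/\sqrt2$ with $R_n^{(j)}=C_n^{(j)}J_n$ and $L_n^{(j)}=S_n^{(j)}J_n$. The first step is the vanishing of mixed monomials with an odd total count of $R$'s and $L$'s; this follows from Theorem 10.2.1 of \cite{Bose2022}, as in Theorem \ref{thm43}. In monomials with an even count, the $J_n$'s pair up through $J_nXJ_n=X^{T}$. For circulants $X^T$ is again circulant, built from a reflected input, and the skew-circulant case is analogous. So each such monomial becomes a monomial in $C_n^{(j)},\tilde C_n^{(j)},D_n$ and adjoints, and Theorem \ref{thm22} again yields joint convergence to some $\{(Y_1^{(j)}+Y_2^{(j)})/\sqrt2\}$.

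The marginal families are handled by earlier results. $\{n^{-1/2}R_n^{(j)}\}$ converge to half-independent symmetrized Rayleigh variables by \cite{Bose2011}, and $\{n^{-1/2}L_n^{(j)}\}$ do so by Theorem \ref{thm:LSC}. This identifies the joint laws of $\{Y_1^{(j)}\}$ and of $\{Y_2^{(j)}\}$ separately.

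The main obstacle is the bookkeeping in part (ii): verifying that the $J_n$-reduction of a mixed word in $R$'s and $L$'s genuinely lands in the algebra handled by Theorem \ref{thm22}. In particular, $J_n$ must commute past $D_n$ in a controlled way; $J_nD_nJ_n$ is $\eta^{n-1}D_n^*$, a unimodular scalar times $D_n^*$. I would check this identity first and track the resulting scalar factors, which cancel in words with balanced $D_n,D_n^*$ counts.
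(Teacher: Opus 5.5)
Your proposal is correct and is essentially the paper's argument. The paper states this theorem as an immediate consequence of Theorems \ref{thm22}, \ref{thm4.2} and \ref{thm43}, and you carry out exactly that reduction copy by copy: (\ref{jcc}) and Theorem \ref{thm32} give the independence of $\{Z_j\}$ and $\{\tilde Z_j\}$, while \cite{Bose2011} and Theorem \ref{thm:LSC} give the half-independence of $\{Y_1^{(j)}\}$ and $\{Y_2^{(j)}\}$.

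The $J_nD_nJ_n$ bookkeeping you flag as the main obstacle is not needed. Since $J_nXJ_n=X^{T}=X^{*}$ holds directly for the real Toeplitz-type matrices $X=C_n^{(j)},S_n^{(j)}$, the reduced words are already monomials in $C_n^{(j)}$, $S_n^{(j)}=D_n\tilde C_n^{(j)}D_n^*$ and their adjoints.
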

{
\begin{remark}
    Note that variables in Theorem \ref{tjoint:con} (i) both collections $\{Z_j\}$ and $\{\tilde{Z}_j\}$ are independent, but $\{(Z_j,\tilde{Z}_j)\}$ is not a collection of independent variables that reaffirm the fact that independent Toeplitz matrices are not asymptotically independent (see \cite{Bose2018}). Similarly, the independent Hankel matrices are not asymptotically half-independent.
\end{remark}
}
\subsection{LSD of symmetric Toeplitz and Hankel matrices}\label{sec42} 
Let $H_n$ and $T_{s,n}$ be respectively Hankel and symmetric Toeplitz random matrices whose entries are iid with mean $0$ and variance $1$.
By results of \cite{Bryc2006, Hammond2005},  the ESD of $n^{-1/2}H_n$ and $n^{-1/2}T_{s,n}$ converge weakly a.s.~to valid probability laws (which are uniquely determined by their moments). 
We now use Theorem \ref{thm43} to give a quick proof of this result. 

\begin{theorem}\label{thm42}
(i) If the entries are iid with mean $0$ and variance $1$, then the ESD of $n^{-1/2}T_{s,n}$ converges weakly a.s.~to the non-random probability law which is determined by the moments of  $(N+\tilde{N})/\sqrt{2}$, where $N$ and $\tilde{N}$ are as in Theorem \ref{thm43} (i).
 \vskip5pt
 
 \noindent 
  (ii)  If the entries are iid with mean $0$ and variance $1$, then the ESD of $n^{-1/2}H_n$ converges weakly a.s.~to the non-random probability law which is determined by the moments of  $(Y_1+Y_2)/\sqrt{2}$, where $Y_1$ and $Y_2$ are as in Theorem \ref{thm43} (ii).
\end{theorem}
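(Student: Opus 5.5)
The plan is to follow the standard three-step route for almost sure LSD results of patterned matrices: convergence of the expected ESD, an almost sure upgrade via Borel--Cantelli, and truncation. The new input is that the moments are now identified through Theorem \ref{thm43}. We treat (i) and (ii) together, writing $A_n$ for either $T_{s,n}$ or $H_n$.

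\textbf{Step 1 (bounded, or at least uniformly moment-bounded, entries).} Assume first that the iid entries additionally satisfy Assumption I, for instance bounded entries. By Theorem \ref{thm43}, for every $k$,
\[
\varphi_n\big((n^{-1/2}A_n)^k\big)=\frac{1}{n}\mathbb{E}\,\mathrm{Tr}\big(n^{-1/2}A_n\big)^k\to \beta_k,
\]
where $\beta_k=\varphi\big(((Z+\tilde Z)/\sqrt2)^k\big)$ in case (i) and $\beta_k=\varphi\big(((Y_1+Y_2)/\sqrt2)^k\big)$ in case (ii). Here $(N+\tilde N)/\sqrt2$ in the statement of part (i) is to be read as the self-adjoint limit $(Z+\tilde Z)/\sqrt2$ of Theorem \ref{thm43}(i).

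\textbf{Step 2 (the limit is a unique probability law).} Theorems \ref{thm41} and \ref{thm43} state that the joint moments are dominated by Gaussian moments. Expanding $((Z+\tilde Z)/\sqrt2)^{2m}$ into $2^{2m}$ monomials and applying Remark \ref{gaussianmom:1} gives $\beta_{2m}\le C^{2m}(2m-1)!!$ for some constant $C$. Hence Carleman's condition holds, and $\{\beta_k\}$ determines a unique symmetric probability law $\mu$; it is a valid law because it arises as a limit of moments of the expected ESDs. The same argument applies to the Rayleigh case, since $m!\le(2m-1)!!$. Consequently the expected ESD converges weakly to $\mu$.

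\textbf{Step 3 (almost sure convergence).} It suffices to show that
\[
\sum_n \mathbb{E}\Big[n^{-1}\mathrm{Tr}(n^{-1/2}A_n)^k-\varphi_n\big((n^{-1/2}A_n)^k\big)\Big]^4<\infty
\]
for each $k$. Borel--Cantelli then gives almost sure convergence of every moment of the ESD, and uniqueness of $\mu$ yields almost sure weak convergence. The fourth-moment bound $O(n^{-2})$ is the standard circuit-counting estimate for link functions with bounded multiplicity; the Toeplitz and Hankel links qualify (Bryc--Dembo--Jiang, Bose). We import it rather than reprove it, since it uses only the pattern and not the identification of the limit.

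\textbf{Step 4 (truncation to general iid entries).} This is the main technical obstacle. For entries that are only iid with mean $0$ and variance $1$, truncate at level $M$, centre, and rescale. The bounded part falls under Steps 1--3, and its limit is the same $\mu$ by universality (Lemma \ref{lemma:univer}). The Hoffman--Wielandt inequality bounds the squared bounded-Lipschitz (or Lévy) distance between the two ESDs by
\[
n^{-2}\sum_{i,j}|a_{ij}-a_{ij}^{(M)}|^2 .
\]
Because each input variable appears at most $n$ times in the matrix, this is at most a constant times $n^{-1}\sum_{l}|x_l-x_l^{(M)}|^2$. By the strong law of large numbers this converges almost surely to $\mathbb{E}|x_0-x_0^{(M)}|^2$, which tends to $0$ as $M\to\infty$. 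Letting $n\to\infty$ and then $M\to\infty$ gives almost sure weak convergence to $\mu$. The diagonal rescaling in Lemma \ref{lemma:digeq} does not affect the limit.
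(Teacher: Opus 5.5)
Your proposal is correct and follows essentially the same route as the paper: moment convergence under Assumption I from Theorem \ref{thm43}, Carleman's condition via the Gaussian domination of the limiting moments, the standard fourth-moment Borel--Cantelli argument for a.s.\ convergence, and truncation. You spell out the Hoffman--Wielandt plus strong-law truncation step that the paper leaves to Lemma 1.4.1 of \cite{Bose2018}, and you correctly read $(N+\tilde N)/\sqrt2$ in part (i) as $(Z+\tilde Z)/\sqrt2$. There are two harmless slips: in the symmetric Toeplitz case an input can appear $2(n-1)$ times, not at most $n$, which only changes your constant; and in the Hankel case Carleman should rest on the Gaussian domination of the \emph{joint} moments of $Y_1,Y_2$ in Theorem \ref{thm43}(ii), or on a Minkowski inequality at finite $n$, rather than on $m!\le(2m-1)!!$ for the marginals alone.
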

\begin{proof}
    (i) First, suppose that the entries satisfy Assumption I.  Note that there is at least one probability measure whose moments are those of $(N+\tilde{N})/\sqrt{2}$. Since these moments are dominated by some Gaussian moments, they satisfy Carleman's condition, implying the uniqueness of this probability measure. So, the expected ESD converges to this measure. Now, by standard fourth moment arguments (for details, see Theorem 1.4.4 of \cite{Bose2018}), the ESD converges to this distribution a.s. Now, Assumption I can be relaxed by a standard truncation argument, { for example, see Lemma 1.4.1 of \cite{Bose2018}}. We omit the details. 
\vskip5pt

\noindent (ii) This proof is similar to that of part (i), and hence it is omitted. 
\end{proof}

{
\begin{remark}
    In Bryc et al. \cite{Bryc2006}, it was conjectured that the LSDs of both symmetric Toeplitz and Hankel matrices have density. For the Toeplitz case, the result was proved in \cite{Sen2011}. The conjecture in the Hankel case has remained open. From Theorem \ref{thm42} (ii), it is clear that the LSD of Hankel is a distribution of the sum of two self-adjoint variables with a common and absolutely continuous distribution. If the dependence structure between these variables could be identified, it might be useful to obtain more information about their joint moments and, in particular, to prove the existence of a density for the LSD of the Hankel matrix.
\end{remark}
}

\begin{remark}\label{rem43}
We calculate the second and fourth moments of the LSDs of $n^{-1/2}T_{s,n}$ and $n^{-1/2}H_{n}$ using their representation in terms of $C_n$, $L_n$, $R_n$ and $S_n$, along with the $*$-limits $(Z+\tilde{Z})/\sqrt{2}$ and $(Y_1+Y_2)/\sqrt{2}$ as obtained in Theorem \ref{thm43}.

Let $\tilde{C}_n$ and $D_n(\pi)$ be matrices as in Theorem \ref{thm22} and $S_n=D_n\tilde{C}_nD_n^*$ be skew-circulant as in (\ref{digsc}). Let 
$\varphi_n=n^{-1}\mathbb{E}\text{Tr}(\cdot))$.
\vskip5pt

\noindent(i) 
From Theorem \ref{thm43} (i). 
\begin{align*}
    \varphi[(Z+\tilde{Z})/\sqrt{2}]^2=\frac{1}{2}\lim_{n\rightarrow\infty}\frac{1}{n}\varphi_n[(T_n+T_n^*)^2]&=\frac{1}{2}\lim_{n\rightarrow\infty}\frac{1}{n}\varphi_n[T_nT_n^*+T_n^*T_n]\\
    &=\frac{1}{2}\lim_{n\rightarrow\infty}\frac{1}{n}\varphi_n[C_nC_n^*+S_nS_n^*]\\
    &=\frac{1}{2}\lim_{n\rightarrow\infty}\frac{1}{n^2}\mathbb{E}\text{Trace}[C_nC_n^*+S_nS_n^*]=1,
\end{align*}
where we have used $\lim_{n\rightarrow\infty}\frac{1}{n^2}\mathbb{E}\text{Trace}[C_nC_n^*]=\lim_{n\rightarrow\infty}\frac{1}{n^2}\mathbb{E}\text{Trace}[S_nS_n^*]=1$. Also, we have used the fact that the limit moments of $n^{-1}T_n^2$ and $n^{-1}{T_n^*}^{2}$ are $0$, and the penultimate step follows from  (\ref{tpl}) and Theorem \ref{thm21}. 

The fourth moment is
\begin{align}
    \varphi[&{(Z+\tilde{Z})}/{\sqrt{2}}]^4\nonumber\\
    &=\frac{1}{4}\lim_{n\rightarrow\infty}\frac{1}{n^2}\varphi_n[T_n^2{T_n^*}^{2}+{T_n^*}^{2}T_n^2+T_nT_n^*T_nT_n^*+T_n^*T_n^2T_n^*+T_n{T_n^*}^2T_n+T_n^*T_nT_n^*T_n]\nonumber\\
    &=\frac{1}{4}\lim_{n\rightarrow\infty}\frac{1}{n^2}(4\varphi_n[T_n^2{T_n^*}^{2}]+2\varphi_n[T_nT_n^*T_nT_n^*]).\label{tmom}
\end{align}
Now,
\begin{align*}
    \lim_{n\rightarrow\infty}\frac{1}{n^2}\varphi_n[T_n^2{T_n^*}^{2}]&=\frac{1}{4}\lim_{n\rightarrow\infty}\frac{1}{n^2}\varphi_n[C_n^2{C_n^*}^2+S_n^2{S_n^*}^2+C_nS_nC_n^*S_n^*\\
    &\ \ +S_nC_nC_n^*S_n^*+C_nS_nS_n^*C_n^*+S_nC_nS_n^*C_n^*]\\
    &=\frac{1}{4}\lim_{n\rightarrow\infty}\frac{1}{n^2}(\varphi_n[C_n^2{C_n^*}^2]+\varphi_n[S_n^2{S_n^*}^2]+\varphi_n[C_nS_nC_n^*S_n^*]\\
    &\ \ +2\varphi_n[C_nC_n^*S_nS_n^*]+\varphi_n[S_nC_nS_n^*C_n^*]),
\end{align*}
where we have used the traciality of $\varphi$ and the commuting property of  circulant and skew-circulant matrices. From (\ref{jcc}) and Theorem \ref{thm31}, we have $\lim_{n\rightarrow\infty}\frac{1}{n^2}\varphi_n[C_n^2{C_n^*}^2]=2=\lim_{n\rightarrow\infty}\frac{1}{n^2}\varphi_n[S_n^2{S_n^*}^2]$, and
\begin{align*}
    \lim_{n\rightarrow\infty}\frac{1}{n^2}\varphi_n[C_nC_n^*S_nS_n^*]&=\lim_{n\rightarrow\infty}\frac{1}{n^3}\mathbb{E}\text{Trace}[C_nC_n^*D_n(\pi)\tilde{C}_n\tilde{C}_n^*D_n^*(\pi)]\\
    &=\lim_{n\rightarrow\infty}\frac{1}{n^3}\sum_{i_1,i_2,i_3,i_4=0}^{n-1}\mathbb{E}[c_{i_1i_2}c_{i_2i_3}^*]\mathbb{E}[\tilde{c}_{i_3i_4}\tilde{c}_{i_4i_1}^*]e^{\iota\frac{\pi}{n}(i_4-i_3\,(\text{mod $n$}))}\\
    &\hspace{5cm}  \cdot e^{\iota\frac{\pi}{n}(i_1-i_4\,(\text{mod $n$}))}d_{i_3}(\pi)d_{i_1}^*(\pi)\\
    &=\lim_{n\rightarrow\infty}\frac{1}{n^3}\sum_{i_1,i_2,i_4=0}^{n-1}d_{i_1}d_{i_1}^*=1,\\
    \lim_{n\rightarrow\infty}\frac{1}{n^2}\varphi_n[C_nS_nC_n^*S_n^*]&=\lim_{n\rightarrow\infty}\frac{1}{n^2}\varphi_n[C_nD_n(\pi)\tilde{C}_nD_n^*(\pi)C_n^*D_n(\pi)\tilde{C}_n^*D_n^*(\pi)]\\
    &=\lim_{n\rightarrow\infty}\frac{1}{n^3}\sum_{i_1,i_2,i_3,i_4=0}^{n-1}\hspace{-0.1in}\mathbb{E}[c_{i_1i_2}c_{i_3i_4}^*]\mathbb{E}[\tilde{c}_{i_2i_3}\tilde{c}_{i_4i_1}^*]e^{\iota\frac{\pi}{n}(i_3-i_2\,(\text{mod $n$}))}\\
    &\hspace{2cm} \cdot e^{\iota\frac{\pi}{n}(i_1-i_4\,(\text{mod $n$}))}d_{i_2}(\pi)d_{i_3}^*(\pi)d_{i_4}(\pi)d_{i_1}^*(\pi)\\
    &=\lim_{n\rightarrow\infty}\frac{1}{n^3}\sum_{i_1,i_2,i_3=0}^{n-1}\mathbb{I}\{0\leq i_1-i_2+i_3\leq n-1\}\\
    &=\lim_{n\rightarrow\infty}\frac{1}{n^3}\frac{2n^3+n}{3}=\frac{2}{3},
\end{align*} 
where we have used $i_2-i_1\,(\text{mod $n$})=i_3-i_4\,(\text{mod $n$})$ to get the penultimate step, and the last equality follows from a standard counting argument. We omit its details. Similarly, we get $\lim_{n\rightarrow\infty}\frac{1}{n^2}\varphi_n[S_nC_nS_n^*C_n^*]=2/3$. Thus, 
\begin{equation}\label{tmom1}
    \lim_{n\rightarrow\infty}\frac{1}{n^2}\varphi_n[T_n^2{T_n^*}^{2}]=\frac{1}{4}(6+\frac{4}{3})=\frac{11}{6}.
\end{equation}
Moreover,
\begin{equation}\label{tmom2}
    \lim_{n\rightarrow\infty}\frac{1}{n^2}\varphi_n[T_nT_n^*T_nT_n^*]=\frac{1}{4}\lim_{n\rightarrow\infty}\frac{1}{n^2}(\varphi_n[C_nC_n^*]^2+\varphi_n[S_nS_n^*]^2+4\varphi_n[C_nC_n^*S_nS_n^*])=\frac{5}{3}.
\end{equation}
On substituting (\ref{tmom1}) and (\ref{tmom2}) in (\ref{tmom}), we get $\varphi[(Z+\tilde{Z})/\sqrt{2}]^4=8/3$, which agrees with the fourth moment of the LSD of a symmetric Toeplitz matrix, see page 46 of \cite{Bose2018}.
\vskip5pt
\noindent (ii) 
We now calculate the second and fourth moments of the LSD of $n^{-1/2}H_n$. 
Let $R_n=C_nJ_n$ and $L_n=S_nJ_n$ be 
as in the proof of Theorem \ref{thm43} (ii). Then, $R_n^2=(C_nJ_n)(C_nJ_n)^*=C_nC_n^*$ and using (\ref{digsc}),
\begin{equation*}
    L_n^2=(S_nJ_n)(S_nJ_n)^*=S_nS_n^*=D_n(\pi)\tilde{C}_n\tilde{C}_n^*D_n^*(\pi).
\end{equation*}

The second moment of $(Y_1+Y_2)/\sqrt{2}$ is  given by
\begin{equation*}
   \varphi[(Y_1+Y_2)/\sqrt{2}]^{2}=\frac{1}{2}[\varphi(Y_1^2)+\varphi(Y_2^2)+\varphi(Y_1Y_2)+\varphi(Y_2Y_1)]=1,
\end{equation*}
where we have used $\varphi(Y_1^2)=\varphi(Y_2^2)=1$ and  
\begin{equation*}    \varphi(Y_1Y_2)=\varphi(Y_2Y_1)=\lim_{n\rightarrow\infty}\varphi_n(n^{-1}R_nL_n)=0.
\end{equation*}
Its fourth moment is
\begin{align}
 \varphi[(Y_1+Y_2)/\sqrt{2}]^{4}&=\frac{1}{4}[\varphi(Y_1^4)+\varphi(Y_2^4)+\varphi(Y_1^2Y_2^2)+\varphi(Y_2^2Y_1^2)+\varphi(Y_1Y_2Y_1Y_2)\nonumber\\
 &\ \ +\varphi(Y_1Y_2^2Y_1)+\varphi(Y_2Y_1Y_2Y_1)+\varphi(Y_2Y_1^2Y_2)]\nonumber\\
 &=\frac{1}{4}[\varphi(Y_1^4)+\varphi(Y_2^4)+4\varphi(Y_1^2Y_2^2)+2\varphi(Y_1Y_2Y_1Y_2)],\label{smom}
\end{align}
where we have used the traciality of the state $\varphi$, and the fact that moments of monomials with odd numbers of $Y_1$ and $Y_2$ are zero. As $Y_1$ and $Y_2$ are symmetrized Rayleigh variables, we have $\varphi(Y_1^4)=\varphi(Y_2^4)=2$. From the proof of Theorem \ref{thm42} (ii), we get
\begin{align}
\varphi(Y_1^2Y_2^2)&=\lim_{n\rightarrow\infty}\varphi_n(n^{-2}R_n^2L_n^2)\nonumber\\
&=\lim_{n\rightarrow\infty}\varphi_n(n^{-2}C_nC_n^*D_n(\pi)\tilde{C}_n\tilde{C}_n^*D_n^*(\pi))\nonumber\\
    &=\lim_{n\rightarrow\infty}\frac{1}{n^3}\sum_{i_1,i_2,i_3,i_4=0}^{n-1}\mathbb{E}[c_{i_1i_2}c_{i_2i_3}^*\tilde{c}_{i_3i_4}\tilde{c}_{i_4i_1}^*]e^{\iota\frac{\pi}{n}(i_4-i_3\,(\text{mod $n$}))}\nonumber\\
    &\hspace{4cm}\cdot e^{-\iota\frac{\pi}{n}(i_4-i_1\,(\text{mod $n$}))}d_{i_3}(\pi)d_{i_1}^*(\pi)\nonumber\\
    &=\lim_{n\rightarrow\infty}\frac{1}{n^3}\sum_{i_1,i_2,i_3,i_4=0}^{n-1}\mathbb{E}[c_{i_1i_2}c_{i_3i_2}]\mathbb{E}[\tilde{c}_{i_3i_4}\tilde{c}_{i_1i_4}]e^{\iota\frac{\pi}{n}(i_1-i_3\,(\text{mod $n$}))}d_{i_3}(\pi)d_{i_1}^*(\pi)\nonumber\\
    &=\lim_{n\rightarrow\infty}\frac{1}{n^3}\sum_{i_1,i_2,i_4=0}^{n-1}d_{i_1}(\pi)d_{i_1}^*(\pi)=1.\label{smom1}
\end{align}
Also, 
\begin{equation}\label{smom2}    \varphi(Y_1Y_2Y_1Y_2)=\lim_{n\rightarrow\infty}\varphi_n(n^{-2}R_nL_nR_nL_n)=\lim_{n\rightarrow\infty}\varphi_n(n^{-2}C_nS_n^*C_nS_n^*)=0.
\end{equation}
Thus, on substituting (\ref{smom1}) and (\ref{smom2}) in (\ref{smom}) yields $\varphi[(Y_1+Y_2)/\sqrt{2}]^{4}=2$, which agrees with the fourth moment of the LSD of Hankel matrix given on page 46 of \cite{Bose2018}.
\end{remark}

{
\appendix
\section{Proof of some results}	
\begin{proof}[{\bf Proof of Lemma \ref{lemma1}}]
    The ESD of $D_n(\theta)$ is given as
    \begin{equation*}
        F_{D_n}(x,y)=\frac{1}{n}\sum_{j=0}^{n-1}\mathbb{I}\Big\{\cos(\frac{j\theta}{n})\leq x,\ \sin(\frac{j\theta}{n})\leq y\Big\},\ (x,y)\in\mathbb{R}^2.
    \end{equation*}
    Then,
    \begin{align*}
        \lim_{n\rightarrow\infty}F_{D_n}(x,y)&=\int_{0}^{1}\mathbb{I}\{\cos(\theta u)\leq x,\ \sin(\theta u)\leq y\}\,\mathrm{d}u=\mathbb{P}\{\cos(\theta U)\leq x,\ \sin(\theta U)\leq y\},
    \end{align*}
    where $U\sim Uniform (0,1)$.

    For the $*$-convergence, we have
    \begin{align*}
        \lim_{n\rightarrow\infty}\frac{1}{n}\mathrm{Tr}(D_n(\theta)^l{D_n^*(\theta)}^m)&=\lim_{n\rightarrow\infty}\frac{1}{n}\sum_{j=0}^{n-1}\exp(\iota\frac{\theta j(l-m)}{n}),\ l,m\ge0\\
        &=\lim_{n\rightarrow\infty}\frac{1-\exp(\iota\theta(l-m))}{n(1-\exp(\iota\frac{\theta(l-m)}{n}))}\\
        &=(1-\exp(\iota\theta(l-m)))\frac{\iota}{\theta(l-m)}\\
        &=\int_{0}^{1}e^{\iota\theta(l-m) u}\,\mathrm{d}u=\mathbb{E}[e^{\iota (l-m)\theta U}].
    \end{align*}
    This completes the proof.
\end{proof}

\begin{proof}[{\bf Sketch of the proof of Corollary \ref{corl1}}] We only provide the outline of the proof.\\

\noindent {\bf Step I.} The eigenvalues of the circulant matrix $n^{-1/2}\tilde{C}_n=n^{-1/2}Circ(c_0,c_1\eta,\dots,c_{n-1}\eta^{n-1})$ with $\eta=e^{\iota\pi/n}$, $n\ge1$ are given as $  \tilde{\lambda}_j=\tilde{\alpha}_j+\iota\tilde{\beta}_j$, $j=0,\dots,n-1$, where 
\begin{equation*}
    \tilde{\alpha}_j=\frac{1}{\sqrt{n}}\sum_{k=0}^{n-1}c_k\cos\Big((2j+1)\frac{\pi k}{n}\Big),\ \ \tilde{\beta}_j=\frac{1}{\sqrt{n}}\sum_{k=0}^{n-1}c_k\sin\Big((2j+1)\frac{\pi k}{n}\Big).
\end{equation*}
Then, $\mathbb{E}[\tilde{\alpha}_j]=\mathbb{E}[\tilde{\beta}_j]=0$, as $\mathbb{E}[c_k]=0$ for each $k\ge0$. Now, using the independence of $c_k$'s and $\mathbb{E}[c_k^2]=1$, for $j\neq \frac{n-1}{2}$, we have
\begin{align*}
    \mathbb{E}[\tilde{\alpha}_j^2]&=\frac{1}{n}\sum_{k=0}^{n-1}\cos^2\Big((2j+1)\frac{\pi k}{n}\Big)\\
    &=\frac{1}{n}\sum_{k=0}^{n-1}\frac{\cos\Big((2j+1)\frac{2\pi k}{n}\Big)+1}{2}=\frac{1}{2}.
\end{align*}
Similarly, $\mathbb{E}[\tilde{\beta}_j^2]=1/2$. Also,
\begin{align*}
    \mathbb{C}\mathrm{ov}(\tilde{\alpha}_j,\tilde{\beta}_j)&=\frac{1}{n}\sum_{k=0}^{n-1}\cos\Big((2j+1)\frac{\pi k}{n}\Big)\sin\Big((2j+1)\frac{\pi k}{n}\Big)\\
    &=\frac{1}{2n}\sum_{k=0}^{n-1}\sin\Big((2j+1)\frac{2\pi k}{n}\Big)=0.
\end{align*}
Now, following the steps of the proof of Theorem 3.2.1 of \cite{Bosesaha2018}, it can be shown that the ESD (for definition see \cite{Bose2022}, p. 108) of  $n^{-1/2}\tilde{C}_n$ converges in $L^2$ to a bivariate Gaussian distribution whose marginals are independent and both have variance $1/2$.

For $*$-convergence, using the commutativity and diagonalizability of circulant matrices, any typical $*$-moment of $n^{-1/2}\tilde{C}_n$, as an element of $(\mathcal{M}_n,n^{-1}\mathbb{E}\mathrm{Tr}(\cdot))$ is given by
\begin{equation}\label{trace:ctilde}
    \frac{1}{n}\mathbb{E}\mathrm{Tr}((n^{-1/2}\tilde{C}_n)^l(n^{-1/2}\tilde{C}^*_n)^m)=\frac{1}{n}\sum_{k=0}^{n-1}\mathbb{E}[\tilde{\lambda}_k^l\Bar{\tilde{\lambda}}_k^m],\ l,m\ge0
\end{equation}
where $\Bar{\tilde{\lambda}}_k$ denotes the complex conjugate of $\tilde{\lambda}_k$. As the input sequence $\{c_k\}_{k\ge0}$ satisfies Assumption I, any power of $\tilde{\lambda}_j$ and $\Bar{\tilde{\lambda}}_j$ are uniformly integrable. Now, using the convergence of ESD $\mathbb{E}[\tilde{\lambda}_k^l\Bar{\tilde{\lambda}}_k^m]\rightarrow\mathbb{E}[\tilde{N}^l\Bar{\tilde{N}}^m]$ for each $l,m\ge0$, where $\tilde{N}$ is a standard complex Gaussian variable. Hence, the average (\ref{trace:ctilde}) converges to $\mathbb{E}[\tilde{N}^l\Bar{\tilde{N}}^m]$ as $n\rightarrow\infty$. Thus, $n^{-1/2}\tilde{C}_n$ converges in $*$-distribution to $\tilde{N}$.

\vskip5pt

\noindent {\bf Step II.} Keeping the notations of the proof of Theorem \ref{thm21}, a typical monomial in $n^{-1/2}\tilde{C}_n$, $n^{-1/2}\tilde{C}_n^*$ and $D_n$ is given by 
$n^{-p/2}\tilde{C}_n^{\epsilon_1}D_n^{(1)}\tilde{C}_n^{\epsilon_2}D_n^{(2)}\dots \tilde{C}_n^{\epsilon_p}D_n^{(p)}$, and 
\begin{align*}
    \varphi_n(&n^{-p/2}\tilde{C}_n^{\epsilon_1}D_n^{(1)}\tilde{C}_n^{\epsilon_2}D_n^{(2)}\dots \tilde{C}_n^{\epsilon_p}D_n^{(p)})\\
    &=\frac{1}{n^{1+p/2}}\sum_{i_1,\dots,i_{p}=0}^{n-1}\mathbb{E}(\tilde{c}_{i_1i_2}^{\epsilon_1}d_{i_2}^{(1)}\tilde{c}_{i_2i_3}^{\epsilon_2}d_{i_3}^{(2)}\dots \tilde{c}_{i_pi_1}^{\epsilon_p}d_{i_1}^{(p)})\\
        &=\frac{1}{n^{1+p/2}}\sum_{i_1,\dots,i_{p}=0}^{n-1}\mathbb{E}(c_{i_1i_2}^{\epsilon_1}c_{i_2i_3}^{\epsilon_2}\dots c_{i_pi_1}^{\epsilon_p})(\eta^{\epsilon_1})^{i_2-i_1\,(\text{mod}\,n)}\dots (\eta^{\epsilon_p})^{i_1-i_p\,(\text{mod}\,n)} d_{i_2}^{(1)}d_{i_3}^{(2)}\dots d_{i_1}^{(p)}.
\end{align*}
Therefore, using similar arguments as in the proof of Theorem \ref{thm21}, we conclude that $(\tilde{C}_n, D_n(\theta))$ jointly converges in $*$-distribution. Indeed, from Step I, it follows that the $*$-limit is $(\tilde{N},d(\theta))$.
\end{proof}

\vskip3pt
\begin{proof}[{\bf Sketch of the proof of Theorem \ref{thm22}}]
    Recall the notations of the proof of Theorem \ref{thm21}. For $p\geq 1$ and $t_1,\dots,t_p\in\{1,\dots,k\}$, 
    let $A_n^{(t_1)},\dots,A_n^{(t_p)}\in\{C_n^{(j)}, \tilde{C}_n^{(j)},\ j=1,\dots,k\}$. Then, a monomial in $n^{-1/2}C_n^{(j)}, {n^{-1/2}}\tilde{C}_n^{(j)}$, $j=1,\dots,k$ and $D_n(\theta)$ is given as
    \begin{equation*}
        Q\coloneqq n^{-p/2}({A_n^{(t_1)}})^{\epsilon_1}D_n^{(1)}({A_n^{(t_2)}})^{\epsilon_2}D_n^{(2)}\dots (A_n^{(t_{p-1})})^{\epsilon_{p-1}}D_n^{(p-1)}(A_n^{(t_p)})^{\epsilon_p}D_n^{(p)}.
    \end{equation*}
Then,
\begin{align*}
    \varphi_n(Q)=\frac{1}{n^{1+p/2}}\sum_{i_1,\dots,i_p=0}^{n-1}\mathbb{E}[({a_{i_1i_2}^{(t_1)}})^{\epsilon_1}({a_{i_2i_3}^{(t_2)}})^{\epsilon_2}\dots (a_{i_{p-1}i_p}^{(t_{p-1})})^{\epsilon_{p-1}}(a_{i_pi_1}^{(t_p)})^{\epsilon_p}]d_{i_2}^{(1)}d_{i_3}^{(2)}\dots d_{i_1}^{(p)}
\end{align*}
As the entries are independent Gaussian random variables, for odd $p$, 
\begin{equation*}
    \mathbb{E}[({a_{i_1i_2}^{(t_1)}})^{\epsilon_1}({a_{i_2i_3}^{(t_2)}})^{\epsilon_2}\dots (a_{i_{p-1}i_p}^{(t_{p-1})})^{\epsilon_{p-1}}(a_{i_pi_1}^{(t_p)})^{\epsilon_p}]=0.
\end{equation*}
Note that $\tilde{C}^{(j)}$ is generated by $(\tilde{c}_0^{(j)},\tilde{c}_1^{(j)}\eta,\dots,\tilde{c}_{n-1}^{(j)}\eta^{n-1})$, where $\eta=e^{i\pi/n}$. For even $p=2m$, we have
\begin{multline*}
    ({a_{i_1i_2}^{(t_1)}})^{\epsilon_1}({a_{i_2i_3}^{(t_2)}})^{\epsilon_2}\dots (a_{i_{{2m}-1}i_{2m}}^{(t_{{2m}-1})})^{\epsilon_{{2m}-1}}(a_{i_{2m}i_1}^{(t_{2m})})^{\epsilon_{2m}}\\=({b_{i_1i_2}^{(t_1)}})^{\epsilon_1}({b_{i_2i_3}^{(t_2)}})^{\epsilon_2}\dots (b_{i_{{2m}-1}i_{2m}}^{(t_{{2m}-1})})^{\epsilon_{{2m}-1}}(b_{i_{2m}i_1}^{(t_{2m})})^{\epsilon_{2m}}\eta^{l(i_1,\dots,i_{2m})\,(\text{mod}\,n)},
\end{multline*}
where $b_{i,j}^{(t)}$'s are standard Gaussian random variables (from $\{c_i^{(j)},\ \tilde{c}_i^{(j)},\ j=1,2\dots,k\}_{i\ge0}$), and $l(i_1,\dots,i_{2m})$ is a linear combination of $i_1,\dots,i_{2m}$ (for example, see Example \ref{exmp23}). Now, by Isserlis' formula
\begin{equation}\label{CtildeC:jtcovn}
     \varphi_n(Q)=\frac{1}{n^{1+m}}\sum_{i_1,\dots,i_{2m}=0}^{n-1}\sum_{\pi\in\mathcal{P}_2(2m)}\prod_{(r,s)\in\pi}\mathbb{E}[({b_{i_ri_{r+1}}^{(t_r)}})^{\epsilon_r}(b_{i_{s}i_{s+1}}^{(t_{s})})^{\epsilon_{s}}]\eta^{l(i_1,\dots,i_{2m})\,(\text{mod}\,n)} d_{i_2}^{(1)}d_{i_3}^{(2)}\dots d_{i_1}^{(2m)}.
\end{equation}
Clearly, for $t_s\neq t_r$, we have $\mathbb{E}[({a_{i_ri_{r+1}}^{(t_r)}})^{\epsilon_r}(a_{i_{s}i_{s+1}}^{(t_{s})})^{\epsilon_{s}}]=0$. Therefore, only those $\pi$ contribute for which $t_r=t_s$ for each $(r,s)\in\pi$ (i.e. we only have indexed pair matching). Now, the rest of the proof follows similar lines to that of Theorem \ref{thm21}. Moreover, from the $*$-convergence of circulant matrices and Step II of the proof of Corollary \ref{corl1}, the joint $*$-limit is $\{N^{(j)}, \tilde{N}^{(j)}, d(\theta),\ j=1,\dots,k\}$, as mentioned in  Theorem \ref{thm22}.
\end{proof}

\begin{proof}[{\bf Proof of Lemma \ref{lemma:digeq}}] (i) It is sufficient to show that all the $*$-moments of $n^{-1/2}(\tilde{A}_n-A_n)$ converge to $0$ as $n\rightarrow\infty$. For $m\ge1$, we have
\begin{align*}
    \lim_{n\rightarrow\infty}\frac{1}{n}\mathbb{E}\mathrm{Tr}[(n^{-1/2}(\tilde{A}_n-A_n))^m]&=\lim_{n\rightarrow\infty}\frac{1}{n^{1+m/2}}\sum_{i=1}^{n}(c-1)^m\mathbb{E}[a_{i,i}^m]\\
    &\leq \lim_{n\rightarrow\infty}\frac{1}{n^{m/2}}(c-1)^mK_m=0,
\end{align*}
where we have used $\sup_{i}\mathbb{E}|a_{i,i}|^m<K_m<\infty$, due to Assumption I.

For Part (ii), the following result will be needed (see Lemma 1.3.2 of \cite{Bose2018}):
\begin{lemma}\label{lsd:dist}
    Let $A$ and $B$ be $n\times n$ symmetric random matrices with LSD $F_A$ and $F_B$, respectively. Then,
    \begin{equation*}
        W_2^2(F_A,F_B)\leq \frac{1}{n}||A-B||_F^2,
    \end{equation*}
    where $W_2$ is the $2$-Wasserstein distance, and $||\cdot||_F$ is the Frobenius norm.
\end{lemma}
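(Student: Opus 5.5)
The statement to prove is Lemma \ref{lsd:dist}: the bound $W_2^2(F_A,F_B)\leq n^{-1}\|A-B\|_F^2$ for symmetric $A,B$. Here $F_A,F_B$ are read as the ESDs of the $n\times n$ matrices, and the inequality holds pointwise for each realization. The plan is to exhibit one explicit coupling of the two ESDs and then bound its cost by the Hoffman--Wielandt inequality.

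\textbf{Step 1 (coupling).} Let $\lambda_1\ge\dots\ge\lambda_n$ and $\mu_1\ge\dots\ge\mu_n$ be the ordered eigenvalues of $A$ and $B$. Both are real because the matrices are symmetric. Take the coupling that puts mass $1/n$ on each pair $(\lambda_j,\mu_j)$. Its marginals are $F_A$ and $F_B$. Since $W_2^2$ is an infimum over all couplings,
\[
W_2^2(F_A,F_B)\le \frac{1}{n}\sum_{j=1}^n(\lambda_j-\mu_j)^2 .
\]

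\textbf{Step 2 (Hoffman--Wielandt).} For real symmetric (or Hermitian) matrices,
\[
\sum_{j=1}^n(\lambda_j-\mu_j)^2\le \|A-B\|_F^2
\]
when both spectra are sorted in the same order. To justify it, write $A=U\Lambda U^*$ and $B=VMV^*$, and set $W=U^*V$. Then
\[
\|A-B\|_F^2=\mathrm{Tr}A^2+\mathrm{Tr}B^2-2\,\mathrm{Tr}(AB)=\sum_j\lambda_j^2+\sum_j\mu_j^2-2\sum_{i,j}|w_{ij}|^2\lambda_i\mu_j .
\]
The matrix $(|w_{ij}|^2)$ is doubly stochastic. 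The linear functional $P\mapsto\sum_{i,j}p_{ij}\lambda_i\mu_j$ is therefore maximized at an extreme point, which by Birkhoff's theorem is a permutation matrix. By the rearrangement inequality, the best permutation matches the sorted orders, so $\sum_{i,j}|w_{ij}|^2\lambda_i\mu_j\le\sum_j\lambda_j\mu_j$. Substituting this into the display gives $\|A-B\|_F^2\ge\sum_j(\lambda_j-\mu_j)^2$.

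Combining Steps 1 and 2 yields the lemma. The only step with real content is the Birkhoff/rearrangement argument in Step 2; the rest is bookkeeping.

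In the paper this lemma is applied with $A=n^{-1/2}\tilde A_n$ and $B=n^{-1/2}A_n$. Then
\[
n^{-1}\|A-B\|_F^2=n^{-2}(c-1)^2\sum_i a_{ii}^2\to 0\quad\text{a.s.}
\]
by the strong law of large numbers, and $W_2$ convergence implies weak convergence, which gives part (ii) of Lemma \ref{lemma:digeq}.
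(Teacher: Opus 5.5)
Your proof is correct and follows the standard argument behind Lemma 1.3.2 of \cite{Bose2018}, which the paper cites without proof. The sorted-eigenvalue coupling gives $W_2^2(F_A,F_B)\le n^{-1}\sum_j(\lambda_j-\mu_j)^2$, and the Hoffman--Wielandt inequality bounds this by $n^{-1}\|A-B\|_F^2$; you derive Hoffman--Wielandt in the usual Birkhoff/rearrangement way, and you are right to read $F_A,F_B$ as ESDs rather than LSDs.

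Your closing aside lies outside the lemma, and its appeal to the ``strong law'' is loose: for $T_{s,n}$ all diagonal entries are the same variable, and the matrices need not be nested. The paper's fourth-moment Borel--Cantelli bound avoids both issues.
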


Suppose $F_{A_n}$ and $F_{\tilde{A}_n}$ denote the LSDs of $n^{-1/2}A_n$ and $n^{-1/2}\tilde{A}_n$, respectively. Then,
\begin{equation*}
    W_2^2(F_{\tilde{A}_n}, F_{A_n})\leq \frac{1}{n^2}\mathrm{Tr}[(\tilde{A}_n-A_n)^2]=\frac{1}{n^2}\sum_{i=1}^{n}(c-1)^2a_{i,i}^2.
\end{equation*}
Hence,
\begin{equation*}
    W_2^4(F_{\tilde{A}_n}, F_{A_n})\leq \frac{1}{n^4}(c-1)^4\sum_{i,j=1}^{n}a_{i,i}^2a_{j,j}^2.
\end{equation*}
From Assumption I, there exists a constant $0<K<\infty$ such that
\begin{equation*}
    \mathbb{E}[W_2^4(F_{\tilde{A}_n}, F_{A_n})]\leq \frac{(c-1)^4}{n^4}\Big[n+(n^2-n)K\Big]=\frac{(c-1)^4(1+(n-1)K)}{n^3},
\end{equation*}
which yields
\begin{equation*}
    \sum_{n=1}^{\infty}\mathbb{E}[W_2^4(F_{\tilde{A}_n}, F_{A_n})]<\infty.
\end{equation*}
Therefore, using the first Borel-Cantelli lemma, $W_2(F_{\tilde{A}_n}, F_{A_n})\rightarrow0$ a.s., and consequently, the LSDs of $n^{-1/2}\tilde{A}_n$ and $n^{-1/2}A_n$ are a.s. equal. 
\end{proof}
}	

\begin{thebibliography}{00}	
\bibitem{Anderson2009}
Anderson, G.W., Guionnet, A. and Zeitouni, O. (2009). \textit{An Introduction to Random Matrices}, Cambridge University Press, Cambridge.
\bibitem{Adhikari2019}
Adhikari, K. and Bose, A. (2019). Brown measure and asymptotic freeness of elliptic and related matrices. \textit{Random Matrix: Theory Appl.}, \textbf{8}(2), 1950007.
{ \bibitem{Bai1999}
Bai, Z.D. (1999). Methodologies in spectral analysis of large-dimensional random matrices: a review. \textit{Statist. Sinica} \textbf{9}, 611-677.
\bibitem{Bose2008}
Bose, A. and Sen, A. (2008). Another look at the moment method for large dimensional random matrices. \textit{Electron. J. Probab.} \textbf{13}, 588-628.}
\bibitem{Bose2011}
Bose, A., Hazra, R.S. and Saha, K. (2011). Convergence of joint moments for independent random patterned matrices. \textit{Ann. Probab.}, \textbf{39}(4), 1607-1620.
\bibitem{basuboseganghazra2012}
Basu, R., Bose,  A., Ganguly, S. and
Hazra, R.S. (2012).  Joint convergence of several copies of
different patterned random matrices.
\textit{Electron. J. Probab.}, \textbf{17}(82), 1-33.
\bibitem{Bosesaha2018} Bose, A. and Saha, K. (2018). \textit {Random Circulant Matrices}, Chapman \& Hall. 
\bibitem{Bose2018}
Bose, A. (2018). \textit{Patterned Random Matrices}, Chapman \& Hall.
\bibitem{Bryc2006}
Bryc, W., Dembo, A. and Jiang, T. (2006).  Spectral measure of large random Hankel, Markov and Toeplitz matrices. \textit{Ann. Probab.}, \textbf{34}(1), 1-38.
\bibitem{Bose2022}
Bose, A. (2022).\textit{ Random Matrices and Non-Commutative Probability},  Chapman \& Hall.
\bibitem{Bose2026}
Bose, A. and Vishwakarma, P. (2026). Patterned matrices with random walk entries. arXiv:2512.04612v2
{ \bibitem{Chatterjee2009}
Chatterjee, S. (2009). Fluctuations of eigenvalues and second order Poincar\'e inequalities. \textit{Probab. Theory Related Fields} \textbf{143}, 1-40.}
\bibitem{Davis1979}
Davis,  P.J. (1979). \textit{Circulant Matrices}, John Wiley \& Sons.
\bibitem{Hammond2005}
 Hammond, C. and Miller, S.J. (2005). Distribution of eigenvalues for the ensemble of real symmetric Toeplitz matrices. \textit{J. Theor. Probab.}, \textbf{18}(3), 537-566.

{
\bibitem{Kargin2009}
Kargin, V. (2009). Spectrum of random Toeplitz matrices with band structure. \textit{Electron. Commun. Probab.} \textbf{14}, 412-421.
\bibitem{Liu2011}
Liu, D.Z. and Wang, Z.D. (2011). Limit distribution of eigenvalues for random Hankel and Toeplitz band matrices. \textit{J. Theor. Probab.} \textbf{24}, 988-1001.
\bibitem{Liu2012}
Liu, D.Z., Sun, X. and Wang, Z. (2012). Fluctuations of eigenvalues for random Toeplitz and related matrices. \textit{Electron. J. Probab.} \textbf{17}, 1-22.
}
\bibitem{Nica2006}
Nica, A. and Speicher, R. (2006). \textit{Lectures on the Combinatorics of Free Probability}, Cambridge University Press.

{
\bibitem{Onatski2026} Onatski, A. and Kargin, V. (2026). The smallest singular value of large random rectangular Toeplitz and circulant matrices. \textit{Electron. J. Probab.} \textbf{31}, 1-30.
\bibitem{Sen2011}
Sen, A. and Vi\'rag, B. (2011). Absolute continuity of the limiting eigenvalue distribution of the random Toeplitz matrix.
\textit{Electron. Commun. Probab.} \textbf{16}, 706-711.
\bibitem{Sen2013}
Sen, A. and Vi\'rag, B. (2013). The top eigenvalue of the random Toeplitz matrix and the sine kernel. \textit{Ann. Probab.} \textbf{41}(6), 4050-4079.
}

\bibitem{Voiculescu1991}
Voiculescu, D. (1991). Limit laws for random matrices and free products. \textit{Invent. Math.}, \textbf{104}, 201-220.
\end{thebibliography}
\end{document}